\definecolor{musgo}{RGB}{0,100,0}
\newcommand{\Vast}{\bBigg@{5}}
\newtheorem{theorem}{Theorem}[section]
\newtheorem{lemma}[theorem]{Lemma}
\newtheorem{definition}[theorem]{Definition}
\newtheorem{example}[theorem]{Example}
\newtheorem{remark}[theorem]{Remark}
\newenvironment{proof}[1][Proof]{\begin{trivlist}
\item[\hskip \labelsep {\bfseries #1}]}{\end{trivlist}}
\def\zede{\textit{\texttt{z}}}
\title{Moving Frames and Noether's Conservation\\ Laws -- the General Case}
\author{T\^ania M. N. \ Gon\c calves $^{a,} $\footnote{Author supported by PNPD/CAPES -- Programa Nacional de P\'{o}s Doutorado, Brazil.} \and Elizabeth L.\ Mansfield $^{b,}$\footnote{{Author supported by EPSRC, UK, Grant EP/H024018/1.}}}
\date{}
\begin{document}
\maketitle

\begin{center}
\footnotesize{$^a$ Departamento de Matem\'{a}tica, Universidade Federal de S\~{a}o Carlos\\ Rod. Washington Lu\'{i}z, Km 235 - C.P. 676 - 13565-905 S\~{a}o Carlos, SP - Brazil \\$^b$ School of Mathematics, Statistics and Actuarial Science, University of Kent\\ Canterbury, CT2 7NF, United Kingdom\\Emails: \texttt{tmng@kentforlife.net},$\quad$ \texttt{E.L.Mansfield@kent.ac.uk}}
\end{center}

\begin{abstract}
In recent works \cite{Mansfield,GoncalvesMansfield}, the authors considered various Lagrangians, which are invariant under a Lie group action, in the case where the independent variables are themselves invariant. Using a moving frame for the Lie group action, they showed how to obtain the invariantized Euler-Lagrange equations and the space of conservation laws in terms of vectors of invariants and the adjoint representation of a moving frame.

In this paper, we show how these calculations extend to the general case where the independent variables may participate in the action. We take for our main expository example the standard linear action of SL(2) on the two independent variables. This choice is motivated by applications to variational fluid problems which conserve potential vorticity. We also give the results for Lagrangians invariant under the standard linear action of SL(3) on the three independent variables.
\end{abstract}

\section{Introduction}
Noether's First Theorem states that for systems coming from a variational principle, conservation laws may be obtained from Lie group actions which leave the Lagrangian invariant.

Recently in \cite{Mansfield,GoncalvesMansfield}, for the case where the invariant Lagrangians may be parametrized so that the independent variables are each invariant under the group action, the authors were able to calculate the invariantized Euler-Lagrange system in terms of the standard Euler operator and a `syzygy' operator specific to the action. Furthermore, they obtained the linear space of conservation laws in terms of vectors of invariants and the adjoint representation of a moving frame for the Lie group action. This new structure for the conservation laws allowed the calculations for the extremals to be reduced and given in the original variables, once the Euler-Lagrange system was solved for the invariants. These results were presented in \cite{GoncalvesMansfield} for all three inequivalent SL(2) actions in the complex plane and in \cite{GoncalvesMansfieldII} for the standard SE(3) action.

In this paper, we show that the results presented in \cite{GoncalvesMansfield} can be extended to cases where the independent variables are not invariant under the group action, which is the case for many physically important models. In Table \ref{tablegpact} we list some conservation laws arising from group actions on the base space. We take as our main expository example the standard linear action of SL(2) on the two independent variables due to its importance in variational problems which conserve potential vorticity. Indeed in  \cite{Hydon,BridgesHydonReich}, Bridges et al. give rigorous connection between particle relabelling, symplecticity and conservation of potential vorticity; they show that conservation of potential vorticity  is a differential consequence of a 1-form quasi-conservation law, which is obtained from rewriting the shallow water equations as a multisymplectic system. Here, we will show that conservation of potential vorticity is a differential consequence of Noether's conservation laws for the SL(2) action.

\begin{table}[h]
\begin{center}
\begin{tabular}{|ll|}
\hline
Group action & Conservation law\\
\hline
Time translation & Energy\\
Space translation & Linear momentum\\
Space rotation & Angular momentum\\
Area preserving diffeomorphism & Potential vorticity\\
\hline
\end{tabular}
\end{center}
\footnotesize{\textbf{\caption{Conservation laws arising from group actions on the base space.}}}\label{tablegpact}
\end{table}

In Section \ref{movingframesICV}, we start by giving some background on moving frames, differential invariants, invariant differentiation operators, and invariant forms. We then move on to the invariant calculus of variations; we show in this section how the invariantized Euler-Lagrange equations are obtained in a way similar to that of the Euler-Lagrange equations in the original variables. 

In Section \ref{structureNCL}, we show how the variational symmetry group acts on Noether's conservation laws and demonstrate the mathematical structure of Noether's conservation laws for invariant Lagrangians with independent variables that are not invariant under the group action. The conservation laws presented in this section are a generalization of the ones obtained in \cite{GoncalvesMansfield}; they differ by the product of a matrix which represents the group action on the $(p-1)$-forms. In the particular case of a variational problem with invariant independent variables, this matrix corresponds to the identity matrix. We end this section with the calculation of conservation laws associated to the Monge-Amp\`{e}re equation.

In Section \ref{twoexamples}, we compute the new version of Noether's conservation laws which are associated to two three-dimensional invariant variational problems -- the shallow water equations and Lagrangians invariant under the linear SL(3) action on the base space. We conclude with some remarks about the form of the Euler-Lagrange equations in terms of the conservation laws, that follow as a consequence of our main result.

\subsection{Motivating example}
Consider the following $\mathrm{SL(2)}$ group action on the $(x,u(x))$-plane,
\begin{equation}\label{firstgroupaction}
g\cdot x =\widetilde{x}=\dfrac{ax+b}{cx+d},\qquad g\cdot u=\widetilde{u}=u,
\end{equation}
where $ad-bc=1$. The following expression 
$$\sigma=\dfrac{u_{xxx}}{u_x^3}-\dfrac{3}{2}\dfrac{u_{xx}^2}{u_x^4},$$
is the lowest order differential invariant, where a differential invariant is an invariant for the prolonged group action of a Lie group on a jet-space. All differential invariants for the group action (\ref{firstgroupaction}) are functions of $\sigma$ and the invariant differentiation operator $\mathcal{D}_x=\frac{1}{u_x}\frac{D}{Dx}$.

Under this group action, the one-dimensional variational problem 
$$\int \dfrac{(2u_{xxx}u_x-3u_{xx}^2)^2}{4u_x^7}\,\mathrm{d}x=\int \sigma^2u_x\mathrm{d}x$$
has SL(2) as a variational symmetry group. Using the formula for Noether's conservation laws, as formulated in $\S 5.4$, Proposition 5.98 of \cite{Olver}, we obtain a system of conservation laws which can be written in matrix form as $A(x,u_x,u_{xx})\boldsymbol{\upsilon}(I)=\mathbf{c}$, where $\boldsymbol{\upsilon}(I)$ is a vector of invariants, and $\mathbf{c}$ are the constants of integration; more precisely, we have
\begin{equation}\label{onedimvarprob}
\left(\begin{array}{ccc} 
\dfrac{xu_{xx}+u_x}{u_x} & 2 x u_x & -\dfrac{u_{xx}(xu_{xx}+2u_x)}{2u_x^3}\\
\dfrac{u_{xx}}{2u_x} & u_x & -\dfrac{u_{xx}^2}{4u_x^3}\\
-\dfrac{x(xu_{xx}+2u_x)}{2u_x} & -x^2u_x & \dfrac{(xu_{xx}+2u_x)^2}{4u_x^3}
\end{array}\right)\left(\begin{array}{c}
-4\mathcal{D}_x\sigma\\
-2\sigma^2+2\mathcal{D}_x^2\sigma\\
-4\sigma\end{array}\right)=\left(\begin{array}{c}c_1\\c_2\\c_3\end{array}\right),
\end{equation}
where this defines $A$ and $\boldsymbol{\upsilon}(I)$.

The Euler-Lagrange equation for this variational problem is $-2\mathcal{D}_x^3\sigma+6\sigma\mathcal{D}_x\sigma=0$,
i.e.
$$(-\mathcal{D}_x^3+2\mathcal{D}_x\sigma+2\sigma\mathcal{D}_x)\mathsf{E}^\sigma(L)+\mathcal{D}_x\left(-L\right)=0,$$
where $\mathsf{E}^\sigma$ is the Euler operator with respect to $\sigma$. This invariantized Euler-Lagrange equation agrees with the invariant form given in Kogan and Olver \cite{KoganOlver},
\begin{equation}\label{IELKO}
\mathcal{A}^\ast\mathcal{E}(L)-\mathcal{B}^\ast\mathcal{H}(L)=0,
\end{equation}
where $\mathcal{E}(L)$ is the invariantized Eulerian, $\mathcal{H}(L)$ a suitable invariantized Hamiltonian, and $\mathcal{A}^\ast,\,\mathcal{B}^\ast$, which are named \textit{Eulerian} and \textit{Hamiltonian operators}, respectively, are invariant differential operators.
  
Once one has solved the Euler-Lagrange equation for $\sigma$ and substituted $\sigma$ in the system of conservation laws (\ref{onedimvarprob}), one obtains three equations for $u_x$ and $u_{xx}$ as functions of $x$. Combining and simplifying these yields
\begin{eqnarray}\label{solution1}
u_x(c_1x-c_2x^2+c_3)+4\sigma=0.
\end{eqnarray}
Equation (\ref{solution1}) can be solved for $u$, once the solution to $\sigma$ is known. As shown in \cite{GoncalvesMansfield}, for one-dimensional invariant variational problems, it may be possible for the the system of conservation laws to be used to solve for the extremals, provided the Adjoint representation is non trivial.

The matrix $A$ defined in (\ref{onedimvarprob}) is \textit{equivariant}, in other words, letting the group act on its components, then one can verify that the group action factors out; more precisely,
$$A(\widetilde{x},\widetilde{u_x},\widetilde{u_{xx}})=R(a,b,c)A(x,u_x,u_{xx}),$$
where
$$R(a,b,c)=\left( \begin{array}{ccc}
ad+bc & 2bd & -2ac\\
cd & d^2 & -c^2\\
-ab & -b^2 &a^2
\end{array}\right), \qquad d=\dfrac{1+bc}{a}.$$
The matrix $R(a,b,c)$ is a representation of SL(2); the group product in parameter space is given by
$$(a,b,c)\cdot (\alpha,\beta,\gamma)=(a\alpha+b\gamma,a\beta+b\delta,c\alpha+d\gamma),\qquad d=\dfrac{1+bc}{a},\qquad \delta=\dfrac{1+\beta\gamma}{\alpha},$$
and it is easily checked that 
$$R(a,b,c)\cdot R(\alpha,\beta,\gamma)=R((a,b,c)\cdot (\alpha,\beta,\gamma)).$$
This representation is the well-known adjoint representation, see \S 3.3 of \cite{Mansfield}. In fact, the map $A$ is a \textit{moving frame}, i.e. an equivariant map from the space $M$ on which the Lie group $G$ acts, in this case, the relevant jet bundle, to the group itself.

At first glance the structure of the conservation laws, for invariant variational problems whose independent variables are also invariant, seems to be identical to the one where the independent variables participate in the action. However, as will be shown later, some of the terms in the vector of invariants come from the Lie derivative of the invariant volume form with respect to the variation parameter; the difference becomes more visible in higher dimensional variational problems, as the conservation laws will also pick up an extra matrix term. 

\section{Moving frames and invariant calculus of variations}\label{movingframesICV}
In this section, we will introduce notions and concepts needed to understand our results, namely, moving frames as formulated by Fels and Olver \cite{FelsOlver,FelsOlverII} in the context of differential algebra, differential invariants of a group action, invariant differential operators, invariant forms and invariant calculus of variations. For further details on these topics see Fels and Olver \cite{FelsOlver,FelsOlverII}, and Mansfield \cite{Mansfield}. Also, a different approach to invariant calculus of variations can be found in Kogan and Olver \cite{KoganOlver} .

We will start by defining what a moving frame is and then use it to obtain the differential invariants, the invariant differential operators and the invariant differential forms. Then we will proceed to the topic of invariant calculus of variations, where we explain how the invariantized Euler-Lagrange equations are calculated. In the process of obtaining these, a collection of boundary terms are picked up; as will be seen in Section \ref{structureNCL}, these will yield part of the new structured version of Noether's conservation laws in terms of invariants and a moving frame. 

\subsection{Moving frames and differential invariants}
A smooth group acting on a smooth space induces an action on the set of its smooth curves and surface elements and on their higher order derivatives  in the relevant jet bundle. These curves and surfaces are known as the \textit{prolonged curves} and \textit{surfaces}. In this paper, the set $M$ on which the group $G$ acts is the set of these prolonged curves and surfaces.

Let $X$ be the set of independent variables with coordinates $\mathbf{x}=(x_1,...,x_p)$ and $U$ the set of dependent variables with coordinates $\mathbf{u}=(u^1,...,u^q)$. We will represent the derivatives of $u^\alpha$ with a multi-index notation, e.g.
$$u^\alpha_\mathrm{K}=\frac{\partial^{|K|} u^\alpha}{\partial x_{k_1}\cdots\partial x_{k_m}},$$
where $\mathrm{K}=(k_1,....,k_m)$ is an unordered $m$-tuple of integers, where the entries $1\le k_\ell\le p$ represent the derivatives with respect to $x_{k_\ell}$; its order is denoted by $|\mathrm{K}|=m$. Consequently, we will represent the coordinates of $M=J^n(X\times U)$ as
$$\zede=(x_1,...,x_p,u^1,...,u^q,u^1_1,...).$$
Furthermore, the operator $\partial/ \partial x_i$ extends to the \textit{total differentiation operator}
$$D_i=\frac{\mathrm{d}}{\mathrm{d}x_i}=\frac{\partial}{\partial x_i}+\sum_{\alpha=1}^q\sum_{\mathrm{K}} u^\alpha_{\mathrm{K}i}\frac{\partial}{\partial u^\alpha_\mathrm{K}},$$
where $D_i$ maps $J^n$ into $J^{n+1}$.

A group action of $G$ on $M$ is a map 
$$G\times M  \rightarrow  M, \qquad (g,\zede) \mapsto  g\cdot\zede,$$
which satisfies either $g\cdot (h\cdot\zede)=(gh)\cdot\zede$, called a \textit{left action}, or $g\cdot (h\cdot\zede)=(hg)\cdot\zede$, called a \textit{right action}. To ease exposition, we will denote at times $g\cdot \zede$ as $\widetilde{\zede}$.

Suppose that $G$ is a Lie group acting smoothly on $M$ and that its action is free and regular in some domain $\mathcal{U}\subset M$. This implies that
\begin{itemize}
\item[-] the group orbits all have the same dimension and foliate $\mathcal{U}$,
\item[-] the existence of a surface $\mathcal{K}$ that intersects these orbits transversally, and for which the intersection with a given group orbit is a single point. This surface $\mathcal{K}$ is known as \textit{cross section}, and 
\item[-] if $\mathcal{O}(\zede)$ is an orbit through $\zede$, then the element $h \in G$ which maps $\zede$ to $\{k\}=\mathcal{O}(\zede)\cap \mathcal{K}$ is unique.
\end{itemize}

Under these conditions we can define an equivariant map $\rho: \mathcal{U}\rightarrow M$
as the map that sends an element $\zede \in \mathcal{U}$ to the unique element $\rho(\zede)\in G$ which satisfies 
$$\rho(\zede)\cdot \zede=k.$$
The map $\rho$ is called the \textit{right moving frame} relative to the cross section $\mathcal{K}$. 

To obtain the right moving frame, in a first instance, we define the cross section $\mathcal{K}$ as the locus of the set of equations $\psi_i(\zede)=0$, for $i=1,...,r$, where $r$ is the dimension of $G$. Then solving the set of equations, known as the \textit{normalization equations},
$$\psi_i(\widetilde{z})=\psi_i(g\cdot \zede)=0,\qquad i=1,...,r,$$
for the $r$ parameters describing $G$ yields the frame in parametric form.

\begin{example}\label{linearsl2}
Consider the linear SL(2) action on the space $(x,y,u(x,y))$ as follows
\begin{equation}\label{sl2onindependentvars}
\left(\begin{array} {c}\widetilde{x} \\ \widetilde{y}\end{array}\right)=\left(\begin{array}{cc} a & b\\ c & d\end{array}\right)\left(\begin{array}{c}x\\ y\end{array}\right), \qquad ad-bc=1,\qquad \widetilde{u}=u.
\end{equation}

The prolonged actions on $u_x$ and $u_y$ are given explicitly by $g\cdot u_x=\widetilde{u_x}=\widetilde{D_x}\widetilde{u}$ and $g\cdot u_y=\widetilde{u_y}=\widetilde{D_y}\widetilde{u}$, respectively.

The \textnormal{transformed total differentiation operators} $\widetilde{D_i}$ are defined by
\begin{equation}\label{transformedtotop}
\widetilde{D_i}=\frac{\mathrm{d}}{\mathrm{d}\widetilde{x_i}}=\sum_{k=1}^p((\mathrm{d}\widetilde{\mathbf{x}}/\mathrm{d}\mathbf{x})^{-T})_{ik}D_k,
\end{equation}
where $\mathrm{d}\widetilde{\mathbf{x}}/\mathrm{d}\mathbf{x}$ is the Jacobian matrix. So,
$$\widetilde{u_x}=du_x-cu_y,\qquad\widetilde{u_y}=-b u_x+a u_y.$$
Taking $M$ to be the space with coordinates $(x,y,u,u_x,u_y,u_{xx},u_{xy},u_{yy},...)$, then the action is locally free near the identity of SL(2) and regular away from the coordinate planes $x=0$ and the locus of $x u_x+yu_y=0$. In this domain, we may take the normalization equations to be $\widetilde{x}=1$, $\widetilde{y}=0$ and $\widetilde{u_y}=0$, and thus obtain
\begin{equation}\label{framesl2}
a=\frac{u_x}{xu_x+yu_y},\quad b=\frac{u_y}{xu_x+yu_y},\quad and \quad c=-y,
\end{equation}
as the frame in parametric form. 
\end{example}

\begin{theorem}
Let $\rho$ be a right moving frame, then the quantity $I(\zede)=\rho(\zede)\cdot\zede$ is an invariant of the group action (see \cite{FelsOlver}).
\end{theorem}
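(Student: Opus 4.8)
The plan is to prove the statement by showing that $I(g\cdot\zede)=I(\zede)$ for every $g\in G$ and every $\zede$ for which both sides are defined; this is exactly the assertion that $I$ is constant along group orbits, i.e.\ that $I$ is an invariant. The whole argument rests on one structural property of the right moving frame, its \emph{equivariance}, which I would establish first and then feed into a one-line computation.

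\textbf{Step 1: equivariance of $\rho$.} Let $\zede\in\mathcal{U}$ and let $g\in G$ with $g\cdot\zede\in\mathcal{U}$. Since the group orbits foliate $\mathcal{U}$, the points $\zede$ and $g\cdot\zede$ lie on the same orbit $\mathcal{O}$, and by hypothesis $\mathcal{O}$ meets the cross section $\mathcal{K}$ in a single point $k$. By the defining property of the right frame, $\rho(\zede)\cdot\zede=k$ and also $\rho(g\cdot\zede)\cdot(g\cdot\zede)=k$, hence $\rho(g\cdot\zede)\cdot(g\cdot\zede)=\rho(\zede)\cdot\zede$. Assuming a left action, the left-hand side rewrites as $\bigl(\rho(g\cdot\zede)\,g\bigr)\cdot\zede$, so $\bigl(\rho(g\cdot\zede)\,g\bigr)\cdot\zede=\rho(\zede)\cdot\zede$; because the action is free, the two group elements acting on $\zede$ must agree, giving $\rho(g\cdot\zede)=\rho(\zede)\,g^{-1}$. (For a right action the same reasoning yields $\rho(g\cdot\zede)=g^{-1}\rho(\zede)$, and Step 2 below adapts verbatim.)

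\textbf{Step 2: invariance of $I$.} Substitute the equivariance relation directly:
$$I(g\cdot\zede)=\rho(g\cdot\zede)\cdot(g\cdot\zede)=\bigl(\rho(\zede)\,g^{-1}\bigr)\cdot(g\cdot\zede)=\bigl(\rho(\zede)\,g^{-1}g\bigr)\cdot\zede=\rho(\zede)\cdot\zede=I(\zede),$$
the third equality again using that we are dealing with a left action. This is the claim.

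I do not expect a genuine obstacle here — the proof is short — but three points need care. First, one must keep the left/right action convention consistent, since both the form of the equivariance relation and the cancellation in Step 2 depend on it. Second, there is a domain subtlety: $\rho(g\cdot\zede)$ only makes sense if $g\cdot\zede\in\mathcal{U}$, which holds automatically when $\mathcal{U}$ is $G$-invariant and, more modestly, for $g$ sufficiently close to the identity in the general case. Third, it is worth flagging explicitly that freeness of the action is precisely what licenses passing from equality of the transformed points to equality of the acting group elements in Step 1; without it the equivariance relation fails. Everything else is routine bookkeeping.
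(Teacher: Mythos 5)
Your proof is correct: the equivariance relation $\rho(g\cdot\zede)=\rho(\zede)\,g^{-1}$ for a left action (derived from freeness, or equivalently from the paper's stated uniqueness of the group element carrying $\zede$ to $\mathcal{O}(\zede)\cap\mathcal{K}$) immediately gives $I(g\cdot\zede)=I(\zede)$, and your caveats about the action convention and the domain of $\rho$ are exactly the right ones to flag. The paper itself offers no proof of this statement, deferring to Fels and Olver \cite{FelsOlver}; your argument is the standard one found there.
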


If $\zede$ is given in coordinates, and the normalization equations are $\widetilde{z}_i=c_i$, for $i=1,...,r$, then
$$\rho(\zede)\cdot \zede=(c_1,...,c_r,I(z_{r+1}),...,I(z_{n})),$$
where 
$$I(z_k)=g\cdot z_k|_{g=\rho(\zede)}, \quad \mathrm{for}\quad k=r+1,...,n.$$
Thus, we denote the invariantized jet bundle coordinates as
$$J^{\,i}=I(x_i)=\widetilde{x_i}|_{g=\rho(\zede)},\quad I^\alpha_\mathrm{K}=I(u^\alpha_\mathrm{K})=\widetilde{u^\alpha_\mathrm{K}}|_{g=\rho(\zede)}.$$
These are also known as the \textit{normalized differential invariants}. This follows the notation in \cite{FelsOlverII}. Other notations appearing in the literature are $\iota(\zede)$ and $\bar{\iota}\zede$.

\vspace{0.4cm}
\noindent\textbf{Example \ref{linearsl2} (cont.)}
\textit{The normalized differential invariants up to order two are as follows} 
\begin{align}\nonumber
& \kern -20pt g\cdot \zede=(\widetilde{x},\widetilde{y},\widetilde{u},\widetilde{u_x},\widetilde{u_y},\widetilde{u_{xx}},\widetilde{u_{xy}},\widetilde{u_{yy}})|_{g=\rho(\zede)}\\ \nonumber
&= (J^x,J^y,I^u,I^u_1,I^u_2,I^u_{11},I^u_{12},I^u_{22})\\ \nonumber
&=\Big(1,0,u,xu_x+yu_y,0, x^2u_{xx}+2xyu_{xy}+y^2u_{yy},\\\label{basicExListofDiffInvs}
&\qquad\displaystyle{\frac{xu_xu_{xy}-yu_yu_{xy}+yu_xu_{yy}-xu_yu_{xx}}{xu_x+yu_y},\frac{u_x^2u_{yy}-2u_xu_yu_{xy}+u_y^2u_{xx}}{(xu_x+yu_y)^2}\Big)}.
\end{align}
\textit{The first, second and fifth components correspond to the normalization equations and are known as the \textnormal{phantom invariants}. We will see that the third and eighth components, $u=I(u)$ and $I(u_{yy})$ respectively,  are the generating invariants and one can obtain all the higher order invariants in terms of them and their derivatives (we refer to Chapter 5 of \cite{Mansfield}  for a discussion of the relevant results that allow such claims to be proved). }

\subsection{Invariant differential operators and differential forms}
The \textit{invariant differential operators}  are calculated in a similar way to that of the normalized differential invariants. We obtain them by evaluating the transformed total differentiation operators at the frame, in other words,
$$\mathcal{D}_i=\widetilde{D_i}|_{g=\rho(\zede)},$$
where $\widetilde{D_i}$ are as defined in (\ref{transformedtotop}). These invariant differentiation operators map differential invariants to differential invariants. 

We know that $\partial u^\alpha_\mathrm{K}/\partial x_i=u^\alpha_{\mathrm{K}i}$, but the same is not true for their invariantized counterparts;  in general
$$\mathcal{D}_i I^\alpha_\mathrm{K}\neq I^\alpha_{\mathrm{K}i}.$$
This motivates the following definition.

\begin{definition}
Invariant differentiation of the jet coordinates, $J^{\,i}$ and $I^\alpha_\mathrm{K}$, are defined, respectively, as 
\begin{eqnarray}\label{diffinvariant}
\mathcal{D}_j J^{\,i}=\delta_{ij}+N_{ij},\qquad \mathcal{D}_jI^\alpha_\mathrm{K}=I^\alpha_{\mathrm{K}j}+M^\alpha_{\mathrm{K}j},
\end{eqnarray}
where $\delta_{ij}$ is the Kronecker delta, and $N_{ij}$ and $M^\alpha_{\mathrm{K}j}$ are the \textnormal{correction terms}. 
\end{definition}

Theorem \ref{errorterms} provides formulae for the correction terms $N_{ij}$ and $M^\alpha_{\mathrm{K}j}$, for which we need to define the following notion of the infinitesimal of a prolonged group action.

Let $G$ be a group parametrized by $a_1,...,a_r$, where $r=\mathrm{dim}(G)$, in a neighbourhood of the identity element. The \textit{infinitesimals of the prolonged group action} with respect to these parameters are
\begin{equation}\label{infinitesimals}
\xi^i_j=\left.\frac{\partial \widetilde{x_i}}{\partial a_j} \right|_{g=e},\qquad \phi^\alpha_{\mathrm{K},j}=\left.\frac{\partial \widetilde{u^\alpha_\mathrm{K}}}{\partial a_j}\right|_{g=e}.
\end{equation}

Since $\xi^i_j$ and $\phi^\alpha_{\mathrm{K},j}$ are functions of the $x_i$, for $i=1,...,p$, $u^\alpha$, for $\alpha=1,...,q$, and $u^\alpha_\mathrm{K}$, we can define 
$$\xi^i_j(I)=\xi^i_j(J^{\,i},I^\beta)$$
and
$$\phi^\alpha_{K,j}(I)=\phi^\alpha_{K,j}(J^{\,i},I^\beta,I^\beta_\mathrm{L}),$$
where the arguments have been invariantized.

\begin{theorem}\label{errorterms}
For a left action on the base space and a right moving frame, the $p\times r$ \textnormal{correction matrix} $\mathsf{K}$, which provides the correction terms, is given by
$$\mathsf{K}_{j\ell}=\left.\widetilde{D_j}\rho_\ell(\widetilde{z})\right|_{g=\rho(z)}=\left((T_eR_{\rho})^{-1})\mathcal{D}_j\rho\right)_\ell,$$
where the frame $\rho=(\rho_1,...,\rho_r)^T$ is in parameter form and $R_\rho:G\rightarrow G$ is right multiplication by $\rho$.
The formulae for the correction terms are
$$N_{ij}=\sum_{\ell=1}^r\mathsf{K}_{j\ell}\xi^i_\ell(I),\qquad M^\alpha_{\mathrm{K}j}=\sum_{\ell=1}^r\mathsf{K}_{j\ell}\phi^\alpha_{\mathrm{K},\ell}(I),$$
where $\ell$ is the index for the group parameters and $r=\dim(G)$. 
\end{theorem}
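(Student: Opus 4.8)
The plan is to obtain the correction terms by differentiating the fundamental invariance relation that characterizes the right moving frame, namely $\rho(\widetilde z)\cdot\widetilde z$ evaluated along the prolonged action. The starting point is the observation that, because $\rho$ is a right moving frame, the invariant $I(z)=\rho(z)\cdot z$ can be computed either by first acting and then framing or, equivalently, one has the equivariance identity $\rho(g\cdot z)=\rho(z)\,g^{-1}$ for a left action. The cleanest route is to write $I^\alpha_{\mathrm K}=\widetilde{u^\alpha_{\mathrm K}}\big|_{g=\rho(z)}$ and apply the transformed total derivative $\widetilde{D_j}$, then evaluate at the frame. Since $\widetilde{D_j}$ is built precisely so that $\widetilde{D_j}\,\widetilde{u^\alpha_{\mathrm K}}=\widetilde{u^\alpha_{\mathrm K j}}$ as functions on the jet bundle \emph{before} we substitute the frame, the chain rule produces two contributions: the "naive" term in which $g$ is held fixed at $\rho(z)$, giving $I^\alpha_{\mathrm K j}$ (respectively $\delta_{ij}$ for the $J^{\,i}$ computation), and a second term coming from differentiating the $g$-dependence through $\rho(\widetilde z)$.

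Concretely, I would set $\rho=\rho(z)$, treat $\widetilde{u^\alpha_{\mathrm K}}$ as a function of $(g,z)$, and compute
$$\widetilde{D_j}\big(\widetilde{u^\alpha_{\mathrm K}}(g,\widetilde z)\big)\Big|_{g=\rho(z)}
=\widetilde{u^\alpha_{\mathrm K j}}\big|_{g=\rho(z)}
+\sum_{\ell=1}^r\Big(\widetilde{D_j}\,\rho_\ell(\widetilde z)\Big)\Big|_{g=\rho(z)}\,
\frac{\partial\,\widetilde{u^\alpha_{\mathrm K}}}{\partial a_\ell}\Big|_{g=\rho(z)}.$$
Comparing with $\mathcal D_j I^\alpha_{\mathrm K}=\widetilde{D_j}\widetilde{u^\alpha_{\mathrm K}}\big|_{g=\rho(z)}$ on the left, the first term on the right is $I^\alpha_{\mathrm K j}$, so $M^\alpha_{\mathrm K j}$ is exactly the second term. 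It remains to identify its two factors. The coefficient $\widetilde{D_j}\rho_\ell(\widetilde z)\big|_{g=\rho(z)}$ is by definition $\mathsf K_{j\ell}$; the alternative expression $\big((T_eR_\rho)^{-1}\mathcal D_j\rho\big)_\ell$ follows from the equivariance of $\rho$ written infinitesimally — differentiating $\rho(\widetilde z)=\rho(z)\,g^{-1}$ (for a left action) and pushing the result back to the Lie algebra via right translation $R_\rho$. For the second factor, $\partial\widetilde{u^\alpha_{\mathrm K}}/\partial a_\ell\big|_{g=\rho(z)}$ is the infinitesimal $\phi^\alpha_{\mathrm K,\ell}$ of the prolonged action, but with its arguments $(x_i,u^\beta,u^\beta_{\mathrm L})$ evaluated at the frame, i.e.\ invariantized: this is precisely $\phi^\alpha_{\mathrm K,\ell}(I)$. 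Assembling gives $M^\alpha_{\mathrm K j}=\sum_\ell\mathsf K_{j\ell}\,\phi^\alpha_{\mathrm K,\ell}(I)$, and the identical argument applied to $\widetilde{x_i}$ in place of $\widetilde{u^\alpha_{\mathrm K}}$ — where the naive term is $\mathcal D_j J^{\,i}$ minus $\delta_{ij}$ since $\widetilde{D_j}\widetilde{x_i}=\delta_{ij}$ — yields $N_{ij}=\sum_\ell\mathsf K_{j\ell}\,\xi^i_\ell(I)$.

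The main obstacle, and the step deserving the most care, is the identification of the correction coefficient $\mathsf K_{j\ell}$ with $\big((T_eR_\rho)^{-1}\mathcal D_j\rho\big)_\ell$ and making sure the invariance/equivariance identity for $\rho$ is differentiated with the right handedness (left action versus right moving frame) and the right chain of substitutions, so that the evaluation $|_{g=\rho(z)}$ genuinely converts $\phi^\alpha_{\mathrm K,\ell}$ into $\phi^\alpha_{\mathrm K,\ell}(I)$ rather than leaving a spurious frame-dependent factor. In particular one must be careful that "$\widetilde{D_j}$ then evaluate" and "evaluate then $\mathcal D_j$" differ by exactly the terms being computed — the commutation of substitution with differentiation is the crux. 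Everything else is bookkeeping: expanding the chain rule, recognizing the naive terms, and reading off the infinitesimals from their definitions in~\eqref{infinitesimals}.
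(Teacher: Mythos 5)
The paper does not actually supply a proof of this theorem; it defers to page 134 of \cite{Mansfield}. Your strategy --- split $\mathcal{D}_j I^\alpha_{\mathrm K}$ by the chain rule into a ``group element held fixed'' term, which gives $I^\alpha_{\mathrm{K}j}$ (respectively $\delta_{ij}$ for $J^{\,i}$), plus a term coming from the $z$-dependence of the frame, which is the correction --- is the standard route and the right one, and your identification of $\mathsf K_{j\ell}=\widetilde{D_j}\rho_\ell(\widetilde z)|_{g=\rho(z)}$ with $\bigl((T_eR_\rho)^{-1}\mathcal D_j\rho\bigr)_\ell$ via the equivariance $\rho(\widetilde z)=\rho(z)g^{-1}$ is correct.

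There is, however, one genuine imprecision, and it sits exactly at the point you yourself flag as the crux. Your displayed chain rule pairs the coefficient $\widetilde{D_j}\rho_\ell(\widetilde z)\big|_{g=\rho(z)}$ with the factor $\partial\widetilde{u^\alpha_{\mathrm K}}/\partial a_\ell\big|_{g=\rho(z)}$, and you then assert that the latter is ``precisely $\phi^\alpha_{\mathrm K,\ell}(I)$''. As written this is false: by (\ref{infinitesimals}) the infinitesimal is a derivative at $g=e$, whereas $\rho(z)\neq e$ in general, and the correct relation --- obtained by writing $g=hg_0$ with $g_0=\rho(z)$, so that for a left action $\widetilde{u^\alpha_{\mathrm K}}(g,z)=\widetilde{u^\alpha_{\mathrm K}}(h,g_0\cdot z)$ with $h=R_{g_0^{-1}}(g)$ --- is $\partial\widetilde{u^\alpha_{\mathrm K}}/\partial a_\ell\big|_{g=\rho(z)}=\sum_m\bigl((T_eR_\rho)^{-1}\bigr)_{m\ell}\,\phi^\alpha_{\mathrm K,m}(I)$. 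This right-translation factor is the entire content of the formula for $\mathsf K$, so it cannot be absorbed silently. There are two consistent bookkeepings: either (i) take the coefficient to be the untranslated $\mathcal D_j\rho_\ell(z)$ and let $(T_eR_\rho)^{-1}$ emerge from converting the $a_\ell$-derivative at $g=\rho(z)$ into infinitesimals at the identity as above, which assembles to $\sum_m\mathsf K_{jm}\phi^\alpha_{\mathrm K,m}(I)$; or (ii) differentiate $I^\alpha_{\mathrm K}(\widetilde z)=\widetilde{u^\alpha_{\mathrm K}}(\rho(\widetilde z),\widetilde z)$, in which case the coefficient is $\mathsf K_{j\ell}$ but the group derivative is taken at $\rho(\widetilde z)|_{g=\rho(z)}=e$, where it literally is $\phi^\alpha_{\mathrm K,\ell}(I)$. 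Your formula mixes the two, which read literally inserts $(T_eR_\rho)^{-1}$ twice; choosing one version and carrying out the factorization $g=(gg_0^{-1})g_0$ explicitly is the missing step.
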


The proof of this theorem can be found in page 134 of \cite{Mansfield}. 

The error terms can be calculated without explicit knowledge of the frame, requiring merely information on the normalization equations and infinitesimals -- symbolic software exists which computes these, see \cite{AIDA} among others. From Equation (\ref{diffinvariant}), one can verify that the processes of invariantization and differentiation do not commute. If we consider two generating invariants, $I^\alpha_\mathrm{J}$ and $I^\alpha_\mathrm{L}$, and let $\mathrm{JK}=\mathrm{LM}$ such that $I^\alpha_{\mathrm{JK}}=I^\alpha_{\mathrm{LM}}$, then we obtain the so-called \textit{syzygies} or \textit{differential identities}
\begin{equation}
\mathcal{D}_\mathrm{K} I^\alpha_\mathrm{J}-\mathcal{D}_\mathrm{M} I^\alpha_\mathrm{L}=M^\alpha_{\mathrm{JK}}-M^\alpha_{\mathrm{LM}}.
\end{equation}
For more information on syzygies, see Chapter 5 in \cite{Mansfield}. A full discussion of the finite generation of invariant differential algebras and their syzygy modules is given in 
 \cite{Hubert,HubertKogan}.

\vspace{0.4cm} 
\noindent\textbf{Example \ref{linearsl2} (cont.)}
\textit{The invariant differential operators for this action are}
\begin{eqnarray}\label{MainEGInvDiffOps}
\mathcal{D}_x &\kern-8pt=&\kern-8pt x\displaystyle\frac{\mathrm{d}}{\mathrm{d} x} + y\displaystyle\frac{\mathrm{d}}{\mathrm{d} y},\\\label{MainEGInvDiffOps1}
\mathcal{D}_y &\kern-8pt=&\kern-8pt -\displaystyle\frac{u_y}{xu_x+yu_y}\frac{\mathrm{d}}{\mathrm{d} x} + 
\displaystyle\frac{u_x}{xu_x+yu_y}\frac{\mathrm{d}}{\mathrm{d} y}.
\end{eqnarray}
\textit{It can now be seen that in the list of differential invariants given in Equation (\ref{basicExListofDiffInvs}), that the fourth comnponent is $\mathcal{D}_x(u)$, the sixth component is $\mathcal{D}_x^2(u)-\mathcal{D}_x(u)$, and the seventh component is $\mathcal{D}_y\mathcal{D}_x(u)$. It is not possible, however, to obtain the eighth component, $I(u_{yy})$ by invariant differentiation of $u$, since $\mathcal{D}_y(u)=0$. All other differential invariants of the form $I(u_K)$ can be obtained from $u$ and $I(u_{yy})$ by invariant differentiation and algebraic operations, and thus these two invariants  generate the algebra of invariants.}

\textit{The syzygy between $I(u)$ and $I(u_{yy})$ is} 
\begin{equation}\label{MainExSyzA}
\mathcal{D}_x(I(u_{yy})) - \mathcal{D}_y^2\mathcal{D}_x(u)=-4I(u_{yy})+\frac1{\mathcal{D}_x(u)}
\left( I(u_{yy})\mathcal{D}_x^2(u) -2 \left(\mathcal{D}_y\mathcal{D}_x(u)\right)^2   \right).
\end{equation}

 \begin{example}\label{examplewithplot} 
{We now extend the previous example by adding an extra, dummy, independent variable $\tau$, which we declare
to be invariant under the group action. In the sequel, we will  use 
differentiation by $\tau$ to effect the variation, a step which will allow us to use the invariant calculus to achieve our results.  As $\tau$ is a dummy variable, the normalisation equations will never contain $\tau$ derivatives. The new generating invariants will therefore be first order in $\tau$, and there will be new syzygies.}
Set $u=u(x,y,\tau)$. Let $g\in SL(2)$ act on $(x,y,u(x,y,\tau))$ as in Example \ref{linearsl2} and set $\widetilde{\tau}=\tau$. Taking the normalization equations as before, we obtain
\begin{equation}\nonumber\begin{array}{rcl}
\widetilde{u_\tau}|_{g=\rho(\zede)}\kern-8pt&=I^u_3\kern -8pt&=u_\tau,\\[8pt]
\widetilde{u_{xx}}|_{g=\rho(\zede)}\kern -8pt&=I^u_{11}\kern-8pt&=x^2u_{xx}+2xyu_{xy}+y^2u_{yy},\\[8pt]
\widetilde{u_{xy}}|_{g=\rho(\zede)}\kern -8pt &=I^u_{12}\kern -8pt&=\displaystyle{\frac{xu_xu_{xy}-yu_yu_{xy}+yu_xu_{yy}-xu_yu_{xx}}{xu_x+yu_y}},\\[8pt]
\widetilde{u_{yy}}|_{g=\rho(\zede)}\kern -8pt &=I^u_{22}\kern -8pt &= \displaystyle{\frac{u_x^2u_{yy}-2u_xu_yu_{xy}+u_y^2u_{xx}}{(xu_x+yu_y)^2}}.
 \end{array}\end{equation}
From Figure \ref{fig:unicafig}, we can see that there are two ways to reach $I^u_{113}$ and since these must yield the same result, we get the following syzygy between $I^u_3$ and $I^u_{11}$:
\begin{equation}\label{sysIu11}
\mathcal{D}_\tau I^u_{11}=\mathcal{D}_x^2 I^u_3-\mathcal{D}_x I^u_3.
\end{equation}
Similarly, there are two possibilities to obtain $I^u_{223}$, which give rise to the following syzygy between $I^u_3$ and $I^u_{22}$:
\begin{equation}\label{sysIu22}
\mathcal{D}_\tau I^u_{22}=\mathcal{D}_y^2 I^u_3-\dfrac{2 I^u_{12}}{I^u_1}\mathcal{D}_y I^u_3+\dfrac{I^u_{22}}{I^u_1}\mathcal{D}_x I^u_3.
\end{equation}
Finally, there are several ways in which to reach $I^u_{123}$; there are two syzygies between $I^u_3$ and $I^u_{12}$, which are as follows:
\begin{eqnarray}\label{sysIu12a}
\mathcal{D}_\tau I^u_{12}\kern -8pt&\kern -58pt= \mathcal{D}_y\mathcal{D}_x I^u_3-\left(\dfrac{I^u_{11}}{I^u_1}+1\right)\mathcal{D}_y I^u_3,\\ \label{sysIu12b}
\mathcal{D}_\tau I^u_{12}\kern -8pt&=\mathcal{D}_x\mathcal{D}_y I^u_3+\left(1-\dfrac{I^u_{11}}{I^u_1}\right)\mathcal{D}_y I^u_3+\dfrac{I^u_{12}}{I^u_1}\mathcal{D}_x I^u_3.
\end{eqnarray}
 \end{example}
 
 \begin{figure*}[h]
\begin{center}
\begin{pspicture}(-2,-3)(4,4)
\psset{unit=1.5}
\psline[linewidth=0.2pt,linestyle=dashed,arrowscale=2]{->}(0,0)(0,2.5)
\psline[linewidth=0.2pt,linestyle=dashed,arrowscale=2]{->}(0,0)(4,0)
\psline[linewidth=0.2pt,linestyle=dashed,arrowscale=2]{->}(0,0)(-2.12,-2.12)
\pscircle*[linecolor=black](0,0){0.075}
\rput(-0.4,0.05){$I^u$}
\pscircle*[linecolor=black](1,0){0.075}
\rput(1,-0.4){$I^u_2$}
\rput{90}(1,-0.7){$=$}
\rput(1,-1){$0$}
\pscircle*[linecolor=black](2,0){0.075}
\rput(2,-0.4){$I^u_{22}$}
\pscircle*[linecolor=black](-0.7,-0.7){0.075}
\rput(-1.1,-0.65){$I^u_1$}
\pscircle*[linecolor=black](-1.41,-1.41){0.075}
\rput(-1.81,-1.36){$I^u_{11}$}
\pscircle*[linecolor=black](0,1){0.075}
\rput(-0.4,1.05){$I^u_3$}
\pscircle*[linecolor=black](0.3,-0.7){0.075}
\rput(0.3,-1.1){$I^u_{12}$}
\pscircle*[linecolor=OrangeRed](0.3,0.3){0.075}
\psline[linewidth=0.2pt,linecolor=OrangeRed,linestyle=dashed,arrowscale=2]{->}(0.075,1)(1,1)(0.35,0.35)
\psline[linewidth=0.2pt,linecolor=OrangeRed,linestyle=dashed,arrowscale=2]{->}(-0.04,0.96)(-0.7,0.3)(0.23,0.3)
\psline[linewidth=0.2pt,linecolor=OrangeRed,linestyle=dashed,arrowscale=2]{->}(0.3,-0.63)(0.3,0.23)
\rput(0.7,0.3){$I^u_{123}$}
\pscircle*[linecolor=RoyalBlue](2,1){0.075}
\psline[linewidth=0.2pt,linecolor=RoyalBlue,linestyle=dashed,arrowscale=2]{->}(2,0.07)(2,0.93)
\psline[linewidth=0.2pt,linecolor=RoyalBlue,linestyle=dashed,arrowscale=2]{->}(0.075,1.02)(1.93,1.02)
\rput(2.4,1.05){$I^u_{223}$}
\pscircle*[linecolor=ForestGreen](-1.41,-0.4){0.075}
\psline[linewidth=0.2pt,linecolor=ForestGreen,linestyle=dashed,arrowscale=2]{->}(-0.07,0.95)(-1.38,-0.35)
\psline[linewidth=0.2pt,linecolor=ForestGreen,linestyle=dashed,arrowscale=2]{->}(-1.41,-1.34)(-1.41,-0.47)
\rput(-1.81,-0.35){$I^u_{113}$}
\end{pspicture}
\textbf{\caption{Paths to the $\boldsymbol{I^u_{\mathrm{K}3}}$ in Example \ref{examplewithplot}, where $\boldsymbol{\mathrm{K}}$ represents the index of differentiation with respect to the $\boldsymbol{x_i}$, for $\boldsymbol{i=1,...,p}$.\label{fig:unicafig}}}
\end{center}
\end{figure*}

From Equations (\ref{sysIu12a}) and (\ref{sysIu12b}) in Example \ref{examplewithplot}, one can verify that the invariant operators $\mathcal{D}_x$ and $\mathcal{D}_y$ do not commute. In general, the invariant total differentiation operators do not commute. In \cite{FelsOlverII}, Fels and Olver gave a formula for the commutators of these invariant operators, which only relies on the correction matrix $\mathsf{K}$ and the infinitesimals of the group action. Denote the invariantized derivatives of the infinitesimals $\xi^k_\ell$, for $k=1,...,p$ and $\ell=1,...,r$, by
$$\Xi_{\ell i}^k=\widetilde{D_i}\xi^k_\ell(\widetilde{\zede})|_{g=\rho(\zede)}.$$
Then the commutators are given by
\begin{equation}\label{commutators}
[\mathcal{D}_i,\mathcal{D}_j]=\sum_k\mathcal{A}_{ij}^k\mathcal{D}_k, \qquad \mathcal{A}_{ij}^k=\sum_{\ell=1}^r\mathsf{K}_{j\ell}\Xi_{\ell i}^k-\mathsf{K}_{i\ell}\Xi_{\ell j}^k.
\end{equation}

Invariant Lagrangians are  invariant volume forms, which are obtained by taking the wedge product of  invariant zero and one-forms.  We define the latter next, and their behaviour under the invariant Lie derivative operators. 

\begin{definition}
The \textnormal{invariant one-forms} obtained via the moving frame are denoted as 
\begin{equation}\label{defofinvariantform}
I(\mathrm{d}x_i)=\mathrm{d}\widetilde{x_i}|_{g=\rho(\zede)}=\left.\left(\sum_{j=1}^p D_j (\widetilde{x_i})\mathrm{d}x_j\right)\right|_{g=\rho(\zede)}.
\end{equation}
\end{definition}

As for differential invariants, the invariant total differentiation operators send invariant differential forms to invariant differential forms. 

Let the invariant differential operator $\mathcal{D}_i$ be associated to the vector field $\mathbf{V}_i$ as follows
\begin{equation}\label{association}
\mathcal{D}_i=f_1(\zede)D_1+\cdots f_p(\zede)D_p \longleftrightarrow \mathbf{V}_i=( f_1(\zede),...,f_p(\zede)).
\end{equation}

Consider the invariant total differentiation $\mathcal{D}_i$ of a form $\omega$, denoted as $\mathcal{D}_i(\omega)$, to be the \textit{Lie derivative}
\begin{equation}\label{bypropofliederiv}
\mathcal{D}_i(\omega)=\mathrm{d} (\mathbf{V}_i\lrcorner\,\omega)+\mathbf{V}_i\lrcorner\,(\mathrm{d}\omega),
\end{equation}
where  $\mathrm{d}$ is the usual exterior derivative, and $\lrcorner$ is the interior product of a vector field with a form. In fact if $\omega=I(\mathrm{d}x_j)$, then (\ref{bypropofliederiv}) simplifies to
\begin{equation}\label{liederivdual}
\mathcal{D}_i\left(I(\mathrm{d}x_j)\right)=\mathbf{V}_i\lrcorner\, \left(\mathrm{d}\, I(\mathrm{d}x_j)\right),
\end{equation}
by the following lemma.

\begin{lemma}\label{lemmadualbasis}
Let $\mathbf{V}_i$ be the vector associated to the invariant total differentiation operator $\mathcal{D}_i$. Then 
\begin{equation}\label{dualbasis}
\mathbf{V}_i\lrcorner\,I(\mathrm{d}x_j)=\delta_{ij},
\end{equation}
where $\delta_{ij}$ is the Kronecker delta, in other words $\{I(\mathrm{d}x_1),...,I(\mathrm{d}x_p)\}$ forms a basis to the dual space of $TM|_{\widetilde{\mathrm{x}}}$, whose basis is  $\{\mathcal{D}_1,...,\mathcal{D}_p\}$.
\end{lemma}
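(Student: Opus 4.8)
The plan is to compute $\mathbf{V}_i \lrcorner\, I(\mathrm{d}x_j)$ directly from the definitions and show it collapses to $\delta_{ij}$ by recognising a matrix product with its inverse. First I would expand both factors in coordinates. By Definition (\ref{defofinvariantform}),
\[
I(\mathrm{d}x_j) = \left.\left(\sum_{k=1}^p D_k(\widetilde{x_j})\,\mathrm{d}x_k\right)\right|_{g=\rho(\zede)}
= \sum_{k=1}^p \left(\mathrm{d}\widetilde{\mathbf{x}}/\mathrm{d}\mathbf{x}\right)_{jk}\Big|_{g=\rho(\zede)}\,\mathrm{d}x_k,
\]
so the invariant one-forms are obtained by applying the (frame-evaluated) Jacobian matrix to the coordinate one-forms $\mathrm{d}x_k$. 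On the other side, by (\ref{transformedtotop}) the transformed operators are $\widetilde{D_i} = \sum_{k} \big((\mathrm{d}\widetilde{\mathbf{x}}/\mathrm{d}\mathbf{x})^{-T}\big)_{ik} D_k$, and $\mathcal{D}_i = \widetilde{D_i}|_{g=\rho(\zede)}$; hence under the association (\ref{association}) the vector field $\mathbf{V}_i$ has components
\[
(\mathbf{V}_i)_k = \big((\mathrm{d}\widetilde{\mathbf{x}}/\mathrm{d}\mathbf{x})^{-T}\big)_{ik}\Big|_{g=\rho(\zede)}
= \big((\mathrm{d}\widetilde{\mathbf{x}}/\mathrm{d}\mathbf{x})^{-1}\big)_{ki}\Big|_{g=\rho(\zede)},
\]
where $\mathbf{V}_i$ is understood to act on the $\mathrm{d}x_k$ via $D_k \lrcorner\, \mathrm{d}x_\ell = \delta_{k\ell}$, i.e. $\partial/\partial x_k \lrcorner\, \mathrm{d}x_\ell = \delta_{k\ell}$ at the level of the base coordinates.

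Next I would contract. Writing $B = (\mathrm{d}\widetilde{\mathbf{x}}/\mathrm{d}\mathbf{x})|_{g=\rho(\zede)}$ for brevity, the interior product gives
\[
\mathbf{V}_i \lrcorner\, I(\mathrm{d}x_j)
= \sum_{k=1}^p (\mathbf{V}_i)_k\, B_{jk}
= \sum_{k=1}^p (B^{-1})_{ki}\, B_{jk}
= \sum_{k=1}^p B_{jk}\,(B^{-1})_{ki}
= (B B^{-1})_{ji} = \delta_{ji},
\]
which is exactly (\ref{dualbasis}). The final sentence of the statement — that $\{I(\mathrm{d}x_1),\dots,I(\mathrm{d}x_p)\}$ is the dual basis to $\{\mathcal{D}_1,\dots,\mathcal{D}_p\}$ — is then immediate: a pairing matrix equal to the identity means each set is the dual basis of the other, and since the $\mathcal{D}_i$ are a frame-evaluated invertible linear combination of the $D_k$ they span $TM|_{\widetilde{\mathbf{x}}}$ in the base directions, so their duals span the cotangent space.

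The one genuine subtlety — and the step I expect to need the most care — is bookkeeping the order of evaluation at the frame versus differentiation, i.e. making sure that "$\widetilde{D_i}$ evaluated at $g=\rho(\zede)$'' really does contract against "$\mathrm{d}\widetilde{x_j}$ evaluated at $g=\rho(\zede)$'' to give the inverse-times-matrix pairing, rather than picking up extra correction terms of the type appearing in (\ref{diffinvariant}). The point to stress is that the interior product here is a purely pointwise algebraic operation on $T M|_{\widetilde{\mathbf{x}}}$ and its dual: both $\mathbf{V}_i$ and $I(\mathrm{d}x_j)$ are, after the substitution $g = \rho(\zede)$, honest vector fields and one-forms on $M$, and $\lrcorner$ does not differentiate the coefficients. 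Once that is clear, the identity $B B^{-1} = \mathrm{Id}$ does all the work and no correction terms arise — they only appear when one instead \emph{differentiates} an invariant, as in (\ref{diffinvariant}). I would make this explicit in one sentence to pre-empt the reader's worry, then present the short computation above.
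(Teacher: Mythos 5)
Your proposal is correct and follows essentially the same route as the paper's proof: both expand $\mathbf{V}_i$ via the inverse-transpose of the frame-evaluated Jacobian and $I(\mathrm{d}x_j)$ via the Jacobian itself, and then recognise the contraction as a row of $\mathcal{J}\mathcal{J}^{-1}=\mathrm{Id}$. Your added remark that the interior product is pointwise and so picks up no correction terms is a sensible clarification but does not change the argument.
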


\begin{proof}
Let $\mathcal{J}$ denote the Jacobian matrix $\mathrm{d}\widetilde{\mathbf{x}}/\mathrm{d}\mathbf{x}|_{g=\rho(\zede)}$. Then
$$\begin{array}{rl}
\mathbf{V}_i\lrcorner\, I(\mathrm{d}x_j)\kern -8pt &= \displaystyle{\left((\mathcal{J}^{-T})_{i1},...,(\mathcal{J}^{-T})_{ip}\right)\lrcorner\, \Big(\sum_{\ell=1}^p (\mathcal{J})_{j\ell} \mathrm{d}x_\ell\Big)}\\
\kern -8pt &= \displaystyle{\left((\mathcal{J}^{-1})_{1i},...,(\mathcal{J}^{-1})_{pi}\right)\lrcorner\, \Big(\sum_{\ell=1}^p (\mathcal{J})_{j\ell} \mathrm{d}x_\ell\Big)}\\ 
\kern -8pt &= (\mathcal{J}^{-1})_{1i} (\mathcal{J})_{j1}+\cdots + (\mathcal{J}^{-1})_{pi} (\mathcal{J})_{jp}\\
\kern -8pt &=\delta_{ij}.
\end{array}$$
$\hfill \Box$
\end{proof}

It is possible to calculate the Lie derivative of the $I(\mathrm{d}x_j)$ with respect to the $\mathcal{D}_i$ knowing only  the infinitesimals and the normalization equations, that is, without explicit knowledge of the frame. The following theorem shows exactly this.

\begin{theorem}\label{liederivativeofform}
Let $g\in G$ act on $\mathbf{x}\in X$ and let $f$ be a function in $M$, and denote the set of invariant total differentiation operators by $\{\mathcal{D}_i\}$, and the set of invariant one-forms, $\{I(\mathrm{d}x_j)\}$. 
Then setting
\begin{equation}\label{formulaforliederiv}
\mathcal{D}_i( I(\mathrm{d}x_j))=\sum_{k=1}^p\mathcal{B}^k_{ij} I(\mathrm{d}x_k)
\end{equation}
we have
$$\mathcal{B}_{ki}^j=\mathcal{A}_{jk}^i,$$
and
$$[\mathcal{D}_j,\mathcal{D}_k](f)=\sum_{i=1}^p\mathcal{A}_{jk}^i\mathcal{D}_i(f)$$
where the $\mathcal{A}_{jk}^i$ are given explicitly  in (\ref{commutators}).   
\end{theorem}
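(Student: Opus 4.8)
The plan is to read the coefficients $\mathcal{B}^k_{ij}$ off the duality between the frame $\{\mathbf{V}_1,\dots,\mathbf{V}_p\}$ and the coframe $\{I(\mathrm{d}x_1),\dots,I(\mathrm{d}x_p)\}$ supplied by Lemma~\ref{lemmadualbasis}, using that, by the magic formula~(\ref{bypropofliederiv}), the operator $\mathcal{D}_i$ acting on forms is the Lie derivative along $\mathbf{V}_i$. The second displayed identity of the theorem, $[\mathcal{D}_j,\mathcal{D}_k](f)=\sum_i\mathcal{A}^i_{jk}\mathcal{D}_i(f)$, is nothing but the Fels--Olver commutator formula~(\ref{commutators}) with its indices renamed, so it needs no separate proof; the whole content is the relation $\mathcal{B}^j_{ki}=\mathcal{A}^i_{jk}$.

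First I would invoke that, being a Lie derivative, $\mathcal{D}_k$ acting on forms obeys the Leibniz rule over the pairing of a one-form with a vector field,
\[
\mathcal{D}_k(\alpha(\mathbf{V}_j)) = (\mathcal{D}_k\alpha)(\mathbf{V}_j) + \alpha([\mathbf{V}_k,\mathbf{V}_j]),
\]
for every invariant one-form $\alpha$; this is a purely formal consequence of~(\ref{bypropofliederiv}), of $\mathrm{d}^2=0$, and of the standard contraction--Lie-derivative commutation rule, together with the fact that on the scalar $\alpha(\mathbf{V}_j)$ the operator $\mathcal{D}_k$ reduces to the total-derivative operator attached to $\mathbf{V}_k$ in~(\ref{association}). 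The single point that deserves care, and the only genuine obstacle, is that all of this takes place in the horizontal exterior calculus on the jet bundle --- the coefficients of the invariant forms are functions of $\zede$, and $\mathrm{d}$ is built from the total derivatives $D_j$ as in~(\ref{defofinvariantform}) --- so one must check that the Cartan identities hold verbatim in that setting; they do, precisely because they are formal consequences of the magic formula and of $\mathrm{d}^2=0$, neither of which sees the distinction.

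Then I would specialise to $\alpha=I(\mathrm{d}x_i)$. By Lemma~\ref{lemmadualbasis}, $I(\mathrm{d}x_i)(\mathbf{V}_j)=\delta_{ij}$ is constant, so the left-hand side above vanishes and
\[
(\mathcal{D}_k I(\mathrm{d}x_i))(\mathbf{V}_j) = -\,I(\mathrm{d}x_i)([\mathbf{V}_k,\mathbf{V}_j]).
\]
Expanding $\mathcal{D}_k I(\mathrm{d}x_i)=\sum_m \mathcal{B}^m_{ki}\,I(\mathrm{d}x_m)$ and pairing with $\mathbf{V}_j$ gives $\mathcal{B}^j_{ki}$ on the left; on the right, writing $[\mathbf{V}_k,\mathbf{V}_j]=\sum_\ell\mathcal{A}^\ell_{kj}\mathbf{V}_\ell$ (which is exactly~(\ref{commutators})) and pairing with $I(\mathrm{d}x_i)$ gives $\mathcal{A}^i_{kj}$. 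Hence $\mathcal{B}^j_{ki}=-\mathcal{A}^i_{kj}=\mathcal{A}^i_{jk}$, the last equality by the antisymmetry of $\mathcal{A}$ in its two lower indices --- immediate either from $[\mathcal{D}_j,\mathcal{D}_k]=-[\mathcal{D}_k,\mathcal{D}_j]$ or directly from the defining formula in~(\ref{commutators}). This is precisely $\mathcal{B}_{ki}^j=\mathcal{A}_{jk}^i$, and, combined with the observation on~(\ref{commutators}) above, it completes the proof. As a cross-check (and an alternative first step) one can instead establish the dual structure equations $\mathrm{d}\,I(\mathrm{d}x_j)=-\sum_{a<b}\mathcal{A}^j_{ab}\,I(\mathrm{d}x_a)\wedge I(\mathrm{d}x_b)$ and then contract with $\mathbf{V}_i$ via~(\ref{liederivdual}); the index bookkeeping is the same.
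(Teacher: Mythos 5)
Your proof is correct, and it reaches the identity $\mathcal{B}^j_{ki}=\mathcal{A}^i_{jk}$ by a genuinely different route from the paper. The paper first proves the auxiliary formula $\mathrm{d}f=\sum_i\mathcal{D}_i(f)\,I(\mathrm{d}x_i)$, then contracts $\mathrm{d}^2f=0$ twice, first with $\mathbf{V}_k$ and then with $\mathbf{V}_j$; the double contraction produces $[\mathcal{D}_j,\mathcal{D}_k](f)=\sum_i\mathcal{D}_i(f)\mathcal{B}^j_{ki}$ for an arbitrary function $f$, and matching against the Fels--Olver formula (\ref{commutators}) gives the result. You instead dispense with the auxiliary function entirely and differentiate the constant pairing $I(\mathrm{d}x_i)(\mathbf{V}_j)=\delta_{ij}$ along $\mathbf{V}_k$ using the Cartan identity $\mathcal{L}_X(\iota_Y\alpha)=\iota_Y(\mathcal{L}_X\alpha)+\iota_{[X,Y]}\alpha$, which is shorter and also makes the linearity of $\mathcal{D}_i(I(\mathrm{d}x_j))$ in the coframe automatic (the paper has to observe this separately). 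The trade-off is that the entire weight of your argument rests on that Cartan identity holding in the horizontal exterior calculus on the jet bundle, where the frame coefficients depend on jet coordinates and $\mathrm{d}$ is built from total derivatives; you correctly flag this as the one point needing care, and it does hold (a short coordinate computation with $\alpha=\sum_m a_m\mathrm{d}x_m$ and $X=\sum_m X^mD_m$ confirms it, the second-derivative terms cancelling exactly as in the classical case), but a referee would want that check written out rather than asserted, since the paper's double-contraction computation is precisely the device that avoids invoking it. Your identification $[\mathbf{V}_k,\mathbf{V}_j]=\sum_\ell\mathcal{A}^\ell_{kj}\mathbf{V}_\ell$ from (\ref{commutators}) is legitimate because the operator commutator $[\mathcal{D}_k,\mathcal{D}_j]$ is first order and its coefficient vector is the horizontal Lie bracket of the $\mathbf{V}$'s, and the final appeal to antisymmetry of $\mathcal{A}$ in its lower indices is immediate from (\ref{commutators}).
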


\begin{proof} 
We  next prove that for any function $f\in M$,
$$\mathrm{d}f=\sum_{i=1}^p\mathcal{D}_i(f) I(\mathrm{d}x_i).$$
Let $\mathrm{d}\mathbf{x}=(\mathrm{d}x_1,...,\mathrm{d}x_p)^T$ and $\boldsymbol{D}=(D_1,...,D_p)^T$; further, set  $I(\mathrm{d}\mathbf{x})=(I(\mathrm{d}x_1),...,I(\mathrm{d}x_p))^T$  and $\boldsymbol{\mathcal{D}}=(\mathcal{D}_1,...,\mathcal{D}_p)^T$. We know that $I(\mathrm{d}\mathbf{x})=\mathcal{J}\mathrm{d}\mathbf{x}$, where $\mathcal{J}$ is the Jacobian matrix $\mathrm{d}\widetilde{\mathbf{x}}/\mathrm{d}\mathbf{x}|_{g=\rho(\zede)}$, so that $\mathrm{d}\mathbf{x}=\mathcal{J}^{-1}I(\mathrm{d}\mathbf{x})$, $\boldsymbol{\mathcal{D}}=\mathcal{J}^{-T}\boldsymbol{D} $ and $\boldsymbol{D}=\mathcal{J}^T\boldsymbol{\mathcal{D}}$, then
$$\begin{array}{rl}
\mathrm{d} f\kern -8pt &= \displaystyle{\sum_{n=1}^p\frac{\partial f}{\partial x_n}\mathrm{d}x_n}\\[10pt]
\kern -8pt &= \displaystyle{\sum_{n=1}^p\left[\sum_{m=1}^p\left(\mathcal{J}^T\right)_{nm}\mathcal{D}_m(f)\left(\sum_{i=1}^p (\mathcal{J}^{-1})_{ni} I(\mathrm{d}x_i)\right)\right]}\\[10pt]
\kern -8pt &= \displaystyle{\sum_{i=1}^p\sum_{m=1}^p\sum_{n=1}^p (\mathcal{J})_{mn}(\mathcal{J}^{-1})_{ni}\mathcal{D}_m(f)I(\mathrm{d}x_i)}\\[10pt]
\kern -8pt &= \displaystyle{\sum_{i=1}^p\sum_{m=1}^p\delta_{mi}\mathcal{D}_m(f)I(\mathrm{d}x_i)}\\[10pt]
\kern -8pt &=\displaystyle{\sum_{i=1}^p\mathcal{D}_i(f)I(\mathrm{d}x_i)}.
\end{array}$$

Next, since $\mathrm{d}^2\equiv 0$, we have 
$$\displaystyle{0=\mathrm{d}^2f=\mathrm{d}\left(\sum_{i=1}^p\mathcal{D}_i(f)I(\mathrm{d}x_i)\right)=\sum_{i=1}^p\left[\mathrm{d}(\mathcal{D}_i(f))\wedge I(\mathrm{d}x_i)+\mathcal{D}_i(f)\mathrm{d} (I(\mathrm{d}x_i))\right]}.$$
Let $\mathbf{V}_k$ be the vector associated to $\mathcal{D}_k$ as defined in (\ref{association}). From $\mathbf{V}_k\lrcorner\, \mathrm{d}^2f=0$, it follows that
$$\begin{array}{rl}
0\kern -8pt&=\displaystyle{\sum_{i=1}^p\left[(\mathbf{V}_k\lrcorner\, \mathrm{d})(\mathcal{D}_i(f)) I(\mathrm{d}x_i) -\mathrm{d}(\mathcal{D}_i(f))(\mathbf{V}_k\lrcorner\, I(\mathrm{d}x_i))+\mathcal{D}_i(f)(\mathbf{V}_k\lrcorner\, \mathrm{d}) (I(\mathrm{d}x_i))\right]}\\[10pt]
\kern -8pt&=\displaystyle{\sum_{i=1}^p\left[\mathcal{D}_k(\mathcal{D}_i(f)) I(\mathrm{d}x_i)-\delta_{ki}\mathrm{d}(\mathcal{D}_i(f))+\mathcal{D}_i(f)\mathcal{D}_k( I(\mathrm{d}x_i))\right]}\\[10pt]
\kern -8pt&=\displaystyle{\sum_{i=1}^p\left[\mathcal{D}_k(\mathcal{D}_i(f)) I(\mathrm{d}x_i)+\mathcal{D}_i(f)\sum_{m=1}^p\mathcal{B}_{ki}^m I(\mathrm{d}x_m)\right]-\mathrm{d}(\mathcal{D}_k(f)),}
\end{array}$$
where we have used the properties of the interior product in the first line, the equality (\ref{liederivdual}) in the second line, and the definition of $ \mathcal{B}^k_{ij}  $, (\ref{formulaforliederiv}), in the third line.
Note this proves that $\mathcal{D}_i( I(\mathrm{d}x_j)) $ is linear in the $I(\mathrm{d}x_{\ell}) $. 

Finally, we have further  that $\mathbf{V}_j\lrcorner \, (\mathbf{V}_k\lrcorner\,\mathrm{d}^2f)=0$, and thus
$$\begin{array}{rl}
0\kern -8pt &=\displaystyle{\sum_{i=1}^p \left[\mathcal{D}_k(\mathcal{D}_i(f))\delta_{ij}+\mathcal{D}_i(f)\mathcal{B}_{ki}^m\delta_{mj}\right]-(\mathbf{V}_j\lrcorner\,\mathrm{d})\mathcal{D}_k(f)}\\[10pt]
\kern -8pt &=\displaystyle{\mathcal{D}_k(\mathcal{D}_j(f))-\mathcal{D}_j(\mathcal{D}_k(f))+\sum_{i=1}^p\mathcal{D}_i(f)\mathcal{B}_{ki}^j}\\[10pt]
\kern -8pt &\displaystyle{=[\mathcal{D}_k,\mathcal{D}_j](f)+\sum_{i=1}^p\mathcal{D}_i(f)\mathcal{B}_{ki}^j},
\end{array}$$
where we have used the properties of the interior product in the first line and the equality (\ref{liederivdual}) in the second line. Rewriting the above we obtain
$$[\mathcal{D}_j,\mathcal{D}_k](f)=\sum_{i=1}^p\mathcal{D}_i(f)\mathcal{B}_{ki}^j.$$
Since $[\mathcal{D}_j,\mathcal{D}_k](f)=\sum_{i=1}^p\mathcal{A}_{jk}^i\mathcal{D}_i(f)$, where $\mathcal{A}_{jk}^i$ is defined in Equation (\ref{commutators}), this implies that
$$\mathcal{A}_{jk}^i=\mathcal{B}_{ki}^j,$$
as required. $\hfill \Box$
\end{proof}

\begin{example}\label{exampleonliederiv}
Recall that $\tau$ is an invariant dummy independent variable introduced to effect variation, a textcolor{blue{device} that will enable us to use the invariant calculus necessary for our results.} Let $g \in SL(2)$ act on $(x,y,\tau)$ as in Example \ref{examplewithplot}. Then the Lie derivatives of $I(\mathrm{d}x_j)$ with respect to $\mathcal{D}_i$ are as shown in Table \ref{tableofliederiv}.
\end{example}

\begin{table}[h]
\begin{center}
\begin{tabular}{l|c|c|c}
Lie derivative & $I(\mathrm{d}x)$ &$I(\mathrm{d}y)$ & $I(\mathrm{d}\tau)$ \\
\hline
$\mathcal{D}_x$ & $\dfrac{I^u_{12}}{I^u_1}I(\mathrm{d}y)$ & $2 I(\mathrm{d}y)$ & $0$\\
$\mathcal{D}_y$ & $-\dfrac{I^u_{12}}{I^u_1}I(\mathrm{d}x)-\dfrac{I^u_{23}}{I^u_1}I(\mathrm{d}\tau)$ & $-2 I(\mathrm{d}x)$ & $0$\\
$\mathcal{D}_\tau$ & $\dfrac{I^u_{23}}{I^u_1}I(\mathrm{d}y)$ & $0$ & $0$
\end{tabular}
\textbf{\caption{Lie derivatives of the $\boldsymbol{I(\mathrm{d}x_j)}$ with respect to the $\boldsymbol{\mathcal{D}_i}$.}\label{tableofliederiv}}
\end{center}
\end{table}

Note that in Example \ref{exampleonliederiv}, the Lie derivatives $\mathcal{D}_i$ of $I(\mathrm{d}\tau)$ are all equal to zero. This is no coincidence as is shown in the following lemma.

\begin{lemma}\label{lasttermdrops}
Let $g\in G$ act on the set of independent variables $\{x_i\}$, for $i=1,...,p+1$. If $g\cdot x_{p+1}=x_{p+1}$, then 
$$\mathcal{D}_i\left(I(\mathrm{d}x_{p+1})\right)=0,$$
for all $i=1,...,p+1$.
\end{lemma}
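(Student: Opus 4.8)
The plan is to show that if $x_{p+1}$ is invariant then $I(\mathrm{d}x_{p+1}) = \mathrm{d}x_{p+1}$, so its exterior derivative vanishes, and then invoke the simplified Lie derivative formula (\ref{liederivdual}). First I would use the definition (\ref{defofinvariantform}): since $g\cdot x_{p+1} = x_{p+1}$, the transformed coordinate $\widetilde{x_{p+1}}$ equals $x_{p+1}$ identically in $\mathbf{x}$ and the group parameters, hence $D_j(\widetilde{x_{p+1}}) = D_j(x_{p+1}) = \delta_{j,p+1}$, which gives $I(\mathrm{d}x_{p+1}) = \mathrm{d}x_{p+1}$ after evaluation at $g = \rho(\zede)$.

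Next, I would compute $\mathrm{d}\,I(\mathrm{d}x_{p+1}) = \mathrm{d}(\mathrm{d}x_{p+1}) = 0$ since $\mathrm{d}^2 \equiv 0$. Then by the identity (\ref{liederivdual}), which states $\mathcal{D}_i(I(\mathrm{d}x_j)) = \mathbf{V}_i \lrcorner\, (\mathrm{d}\, I(\mathrm{d}x_j))$, we obtain $\mathcal{D}_i(I(\mathrm{d}x_{p+1})) = \mathbf{V}_i \lrcorner\, 0 = 0$ for every $i = 1,\ldots,p+1$. Alternatively, and perhaps more transparently, one may argue directly from the general Lie derivative formula (\ref{bypropofliederiv}): $\mathcal{D}_i(I(\mathrm{d}x_{p+1})) = \mathrm{d}(\mathbf{V}_i \lrcorner\, \mathrm{d}x_{p+1}) + \mathbf{V}_i \lrcorner\, (\mathrm{d}\,\mathrm{d}x_{p+1})$; the second term is zero as above, and the first term is $\mathrm{d}$ of the constant $\delta_{i,p+1}$ (using that $\{I(\mathrm{d}x_j)\}$ is dual to $\{\mathcal{D}_j\}$ by Lemma \ref{lemmadualbasis}, or directly that $\mathbf{V}_i$ has $(p+1)$-th component equal to $\delta_{i,p+1}$ when $x_{p+1}$ is invariant), hence also zero.

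There is essentially no obstacle here; the only point requiring a word of care is the claim that $I(\mathrm{d}x_{p+1}) = \mathrm{d}x_{p+1}$. This is where invariance of $x_{p+1}$ is used: because $\widetilde{x_{p+1}} = x_{p+1}$ does not depend on any of the other $x_j$ nor on $\mathbf{u}$ or its derivatives, its total derivatives are trivial, and this persists after substituting the frame. One should also note that $\mathcal{D}_{p+1}$ itself is well defined as an invariant operator (it is $\widetilde{D_{p+1}}|_{g=\rho}$), so the statement "for all $i = 1,\ldots,p+1$" indeed includes $i = p+1$, and the argument above covers that case uniformly. I would keep the proof to three or four lines, citing (\ref{defofinvariantform}), (\ref{liederivdual}), and $\mathrm{d}^2 = 0$.
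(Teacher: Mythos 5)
Your proof is correct, but it takes a genuinely different route from the paper's. You argue at the level of the forms themselves: since $\widetilde{x_{p+1}}=x_{p+1}$ identically, the definition (\ref{defofinvariantform}) gives $I(\mathrm{d}x_{p+1})=\mathrm{d}x_{p+1}$, which is exact, and then Cartan's formula (\ref{bypropofliederiv}) kills both terms --- the contraction $\mathbf{V}_i\lrcorner\, I(\mathrm{d}x_{p+1})=\delta_{i,p+1}$ is constant by Lemma \ref{lemmadualbasis}, and $\mathrm{d}^2x_{p+1}=0$. The paper instead works at the level of the structure coefficients: it expands $\mathcal{D}_i(I(\mathrm{d}x_{p+1}))=\sum_\ell \mathcal{B}^\ell_{i,p+1}I(\mathrm{d}x_\ell)$, identifies $\mathcal{B}^\ell_{i,p+1}$ with the commutator coefficients $\mathcal{A}^{p+1}_{\ell i}=\sum_n \mathsf{K}_{in}\Xi^{p+1}_{n\ell}-\mathsf{K}_{\ell n}\Xi^{p+1}_{ni}$ via Theorem \ref{liederivativeofform}, and observes that these vanish because the infinitesimals $\xi^{p+1}_n$ of an invariant coordinate are zero. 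The two arguments encode the same hypothesis in different ways --- you use the finite statement $\widetilde{x_{p+1}}=x_{p+1}$, the paper uses its infinitesimal consequence $\xi^{p+1}_n=0$. Your version is shorter and more elementary, needing only the definition of $I(\mathrm{d}x_{p+1})$ and exterior calculus; the paper's version stays within the symbolic framework of correction matrices and infinitesimals, which is consistent with its stated aim of computing everything without explicit knowledge of the frame and which reuses machinery (Theorem \ref{liederivativeofform}) already needed elsewhere. Both are complete; no gap in yours.
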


\begin{proof}
The Lie derivative of a form can be written as
$$\mathcal{D}_i\left(I(\mathrm{d}x_{p+1})\right)=\sum_{\ell=1}^{p+1} \mathcal{B}^\ell_{i,p+1}I(\mathrm{d}x_\ell).$$
According to Theorem \ref{liederivativeofform}, the coefficients $ \mathcal{B}^\ell_{i,p+1}$ are equal to 
$$\mathcal{A}_{\ell i}^{p+1}=\sum_{n=1}^r \mathsf{K}_{in}\Xi_{n\ell}^{p+1}-\mathsf{K}_{\ell n}\Xi_{ni}^{p+1}.$$
Since $x_{p+1}$ is invariant, $\xi_n^{p+1}=0$, and therefore, $ \Xi^{p+1}_{n\ell}= \Xi^{p+1}_{ni}=0$. Thus, for $\ell=1,...,p+1$, 
$$\mathcal{B}^\ell_{i,p+1}I(\mathrm{d}x_\ell)=0.$$
$\hfill \Box$
\end{proof}

As we are interested in calculating the invariantized Euler-Lagrange equations and its associated conservation laws for variational problems whose independent variables are not invariant, it will at times be necessary to apply recursively the commutators $[\mathcal{D}_{p+1},\mathcal{D}_i]=\sum_{k=1}^{p+1}\mathcal{A}^k_{p+1,i}\mathcal{D}_k$, for $i=1,...,p$, where $x_{p+1}$ is a dummy invariant independent variable and $\mathcal{A}^k_{p+1,i}$ are as defined in (\ref{commutators}) . Lemma \ref{lemmaofmcommutators} provides a formula for the commutators $[\mathcal{D}_{p+1},\mathcal{D}_\mathrm{K}]$, where $\mathrm{K}$ is a multi-index of differentiation with respect to $x_i$, for $i=1,...,p$.

\begin{lemma}\label{lemmaofmcommutators}
Let $g\in G$ act on the set of independent variables $\{x_i\}$, for $i=1,...,p+1$. If $g\cdot x_{p+1}=x_{p+1}$ and $\omega$ is some differential form on $M$, then
\begin{equation}\label{Kcommutator}
\mathcal{D}_{p+1}\mathcal{D}_\mathrm{K}(\omega)=\left(\mathcal{D}_\mathrm{K}\mathcal{D}_{p+1}+\sum_{\ell=1}^{m}\sum_{n=1}^p\mathcal{D}_{\mathrm{K}_\ell}\left(\mathcal{A}^n_{p+1,\,k_\ell}\mathcal{D}_n\right)\mathcal{D}_{\mathrm{K}\backslash(\mathrm{K}_\ell,k_\ell)}\right)(\omega),
\end{equation}
where $\mathrm{K}=(k_1,...,k_m)$ is a multi-index of differentiation with respect to $x_i$, for $i=1,...,p$, of order $m$ and, $\mathrm{K}_\ell$ and  $\mathrm{K}\backslash (\mathrm{K}_\ell,k_\ell)$ are tuples of differentiation of the following form
$$\mathrm{K}_\ell=(k_1,...,k_{\ell-1}), \quad with\quad \mathrm{K}_1=(0),\quad 
and\quad  \mathrm{K}\backslash (\mathrm{K}_\ell,k_\ell)=(k_{\ell+1},...,k_m).$$
\end{lemma}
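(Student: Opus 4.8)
The plan is to prove (\ref{Kcommutator}) by induction on the order $m=|\mathrm{K}|$, the engine being the commutator formula (\ref{commutators}) together with the fact that invariance of $x_{p+1}$ kills the $\mathcal{D}_{p+1}$-component of every commutator that arises.

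First I would record that vanishing. By the definition (\ref{infinitesimals}), invariance of $x_{p+1}$ gives $\xi^{p+1}_n=\partial\widetilde{x_{p+1}}/\partial a_n|_{g=e}=0$ for all $n=1,\dots,r$, hence $\Xi^{p+1}_{ni}=\widetilde{D_i}\xi^{p+1}_n(\widetilde{\zede})|_{g=\rho(\zede)}=0$ for every $i$; feeding this into the formula for $\mathcal{A}^k_{ij}$ in (\ref{commutators}) gives $\mathcal{A}^{p+1}_{p+1,k}=0$ for every $k$, exactly as in the proof of Lemma \ref{lasttermdrops}. Therefore, for each $k\in\{1,\dots,p\}$,
$$[\mathcal{D}_{p+1},\mathcal{D}_k]=\sum_{n=1}^{p}\mathcal{A}^n_{p+1,k}\mathcal{D}_n,$$
i.e. commuting $\mathcal{D}_{p+1}$ past any of $\mathcal{D}_1,\dots,\mathcal{D}_p$ re-expands only over $\mathcal{D}_1,\dots,\mathcal{D}_p$, with no fresh $\mathcal{D}_{p+1}$ appearing — this closure is precisely what produces a finite closed form, and is where the hypothesis is used. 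Throughout, $\mathcal{D}_i$ acts on forms via the Lie derivative (\ref{bypropofliederiv}) and (\ref{commutators}) is read as an operator identity in that setting, compatible with the structure equations of Theorem \ref{liederivativeofform}.

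The base case $m=1$, where $\mathrm{K}=(k_1)$, $\mathrm{K}_1=(0)$ and $\mathrm{K}\backslash(\mathrm{K}_1,k_1)$ is empty, so that $\mathcal{D}_{\mathrm{K}_1}$ and $\mathcal{D}_{\mathrm{K}\backslash(\mathrm{K}_1,k_1)}$ are the identity, is then nothing but the displayed commutator above. For the inductive step I would split $\mathcal{D}_\mathrm{K}=\mathcal{D}_{\mathrm{K}'}\mathcal{D}_{k_m}$ with $\mathrm{K}'=(k_1,\dots,k_{m-1})$ of order $m-1$, apply the induction hypothesis to $\mathcal{D}_{p+1}\mathcal{D}_{\mathrm{K}'}$, post-compose the resulting identity with $\mathcal{D}_{k_m}$, and in the leading term commute the surviving $\mathcal{D}_{p+1}$ past $\mathcal{D}_{k_m}$ by the base case. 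Matching multi-indices then finishes it: for $\ell=1,\dots,m-1$ one has $\mathrm{K}'_\ell=\mathrm{K}_\ell$ while $\mathrm{K}'\backslash(\mathrm{K}'_\ell,k_\ell)$ followed by $k_m$ equals $\mathrm{K}\backslash(\mathrm{K}_\ell,k_\ell)$, so these reproduce the first $m-1$ summands of (\ref{Kcommutator}); the extra commutator $\mathcal{D}_{\mathrm{K}'}[\mathcal{D}_{p+1},\mathcal{D}_{k_m}]=\sum_n\mathcal{D}_{\mathrm{K}_m}(\mathcal{A}^n_{p+1,k_m}\mathcal{D}_n)$ — note $\mathrm{K}'=\mathrm{K}_m$ and $\mathrm{K}\backslash(\mathrm{K}_m,k_m)$ is empty — supplies the $\ell=m$ summand; and $\mathcal{D}_{\mathrm{K}'}\mathcal{D}_{k_m}\mathcal{D}_{p+1}=\mathcal{D}_\mathrm{K}\mathcal{D}_{p+1}$ is the leading term.

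The only real work is the bookkeeping: getting the degenerate index ranges right ($\mathrm{K}_1=(0)$, empty tails) and noting that the Leibniz action of the prefix $\mathcal{D}_{\mathrm{K}_\ell}$ on the composite operator $\mathcal{A}^n_{p+1,k_\ell}\mathcal{D}_n$, which also differentiates the correction coefficient, is exactly what the nested operator notation on the right of (\ref{Kcommutator}) means. At bottom this is the elementary associative-algebra identity $AB_1\cdots B_m-B_1\cdots B_m\,A=\sum_{\ell}B_1\cdots B_{\ell-1}[A,B_\ell]B_{\ell+1}\cdots B_m$ with $A=\mathcal{D}_{p+1}$ and $B_i=\mathcal{D}_{k_i}$, each bracket rewritten by (\ref{commutators}) and stripped of its $\mathcal{D}_{p+1}$-term; one could cite that and dispense with the induction entirely. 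I do not expect a genuine obstacle: the hypothesis $g\cdot x_{p+1}=x_{p+1}$ does the one nontrivial job, namely keeping $\mathcal{D}_{p+1}$ from re-entering the sum, and everything else is formal.
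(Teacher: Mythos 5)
Your proof is correct and follows essentially the same route as the paper: both rest on the recursive (telescoping) application of the commutator formula (\ref{commutators}) together with the observation that $\xi^{p+1}_j=0$ forces $\mathcal{A}^{p+1}_{p+1,k_\ell}=0$, so no new $\mathcal{D}_{p+1}$ terms are generated; the paper unrolls the recursion from the left while you organise it as an induction peeling $\mathcal{D}_{k_m}$ from the right, which is only a cosmetic difference.
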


\begin{proof}
To obtain (\ref{Kcommutator}), we use the equation for the commutators (\ref{commutators}) recursively as follows,
\begin{align}\nonumber
&\mathcal{D}_{p+1}\mathcal{D}_\mathrm{K}(\omega)\\\nonumber
&\quad=\displaystyle{\left(\mathcal{D}_{k_1}\mathcal{D}_{p+1}+\sum_{n=1}^{p+1}\mathcal{A}^n_{p+1,\,k_1}\mathcal{D}_n\right)\mathcal{D}_{k_2}\cdots\mathcal{D}_{k_m}(\omega)}\\\nonumber
&\quad=\displaystyle{\mathcal{D}_{k_1}\left(\mathcal{D}_{k_2}\mathcal{D}_{p+1}+\sum_{n=1}^{p+1}\mathcal{A}^n_{p+1,\,k_2}\mathcal{D}_n\right)\mathcal{D}_{k_3}\cdots\mathcal{D}_{k_m}(\omega)+\sum_{n=1}^{p+1}\mathcal{A}^n_{p+1,\,k_1}\mathcal{D}_n\mathcal{D}_{k_2}\cdots\mathcal{D}_{k_m}(\omega)}\\\label{laststep}
&\quad=\displaystyle{\mathcal{D}_{k_1}\mathcal{D}_{k_2}\mathcal{D}_{p+1}\mathcal{D}_{k_3}\cdots\mathcal{D}_{k_m}(\omega)+\sum_{\ell=1}^2\sum_{n=1}^{p+1}\mathcal{D}_{\mathrm{K}_\ell}(\mathcal{A}^n_{p+1,\,k_\ell}\mathcal{D}_n)\mathcal{D}_{\mathrm{K}\backslash (\mathrm{K}_\ell,k_\ell)}(\omega),}
\end{align}
and so on. Note that as $\widetilde{x_{p+1}}=x_{p+1}$, then $\xi_j^{p+1}=0$, for all $j=1,...,r$, and therefore, from (\ref{commutators}) we have that $\mathcal{A}^{p+1}_{p+1,\,k_\ell}=0$ for all $\ell$. 
After applying the commutators (\ref{commutators}) recursively and setting $\mathcal{A}^{p+1}_{p+1,\,k_\ell}$ to zero for all $\ell$, (\ref{laststep}) becomes
$$\displaystyle{\mathcal{D}_\mathrm{K}\mathcal{D}_{p+1}(\omega)=\mathcal{D}_\mathrm{K}\mathcal{D}_{p+1}(\omega)+\sum_{\ell=1}^m \sum_{n=1}^{p}\mathcal{D}_{\mathrm{K}_\ell}(\mathcal{A}^n_{p+1,\,k_\ell}\mathcal{D}_n)\mathcal{D}_{\mathrm{K}\backslash(\mathrm{K}_\ell,k_\ell)}(\omega)}.$$
$\hfill \Box$
\end{proof}

\subsection{Invariant Calculus of Variations}\label{subsecINVARIANTCALCULUS}
Consider Lagrangians to be smooth functions of $\mathbf{x}$, $\mathbf{u}$ and finitely many derivatives of $u^\alpha$ and denote them as $\bar{\mathscr{L}}[\mathbf{u}]=\int \bar{L}[\mathbf{u}]\,\mathrm{d}^p\mathbf{x}$, where $\mathrm{d}^p\mathbf{x}=\mathrm{d}x_1\dots\mathrm{d}x_p$. Moreover, assume these to be invariant under some group action and let the $\kappa_j$, for $j=1,...,N$, denote the generating differential invariants of that group action; in \cite{HubertKogan} Hubert and Kogan prove that there exists a finite number of generating invariants.  We can then rewrite $\bar{\mathscr{L}}[\mathbf{u}]$ as $\mathscr{L}[\boldsymbol{\kappa}]=\int L[\boldsymbol{\kappa}]\, I(\mathrm{d}^p\mathbf{x})$, where $I(\mathrm{d}^p\mathrm{x})=I(\mathrm{d}x_1)\dots I(\mathrm{d}x_p)$ is the invariant volume form obtained via the moving frame.

Kogan and Olver in \cite{KoganOlver} obtained formulae for the invariantized Euler-Lagrange equations through the construction of a variational bicomplex; we arrive at these using calculations that are similar to those employed to obtain the Euler-Lagrange equations in the original variables $(\mathbf{x},\mathbf{u})$. 

Recall that if $\mathbf{x}\mapsto (\mathbf{x},\mathbf{u}(\mathbf{x}))$ extremizes the functional $\bar{\mathscr{L}}[\mathbf{u}]$, then a small perturbation of $\mathbf{u}$ yields
\begin{eqnarray}
 0&\kern -155pt =\displaystyle{\left.\frac{\mathrm{d}}{\mathrm{d}\varepsilon}\right|_{\varepsilon=0}\bar{\mathscr{L}}[\mathbf{u}+\varepsilon\mathbf{v}]}\nonumber \\[10pt]
&=\displaystyle{\int \sum_{\alpha=1}^q\left[\mathsf{E}^\alpha(\bar{L})v^\alpha+\sum_{i=1}^p\frac{\mathrm{d}}{\mathrm{d}x_i}\left(\frac{\partial \bar{L}}{\partial u^\alpha_i}v^\alpha+\cdots\right)\right]\mathrm{d}^p\mathbf{x}}\nonumber
\end{eqnarray}
after differentiation under the integral sign and integration by parts, where
\begin{eqnarray}
\mathsf{E}^\alpha=\sum_\mathrm{K}(-1)^{m}\frac{\mathrm{d}^{m}}{\mathrm{d}x_{k_1}\dots \mathrm{d}x_{k_m}}\frac{\partial}{\partial u^\alpha_\mathrm{K}}\nonumber
\end{eqnarray}
is the Euler operator with respect to the dependent variables $u^\alpha$ and $\mathrm{K}=(k_1,...,k_m)$.

To obtain the invariantized analogue of $\left.\frac{\mathrm{d}}{\mathrm{d}\varepsilon}\right|_{\varepsilon=0}\bar{\mathscr{L}}[\mathbf{u}+\varepsilon\mathbf{v}]$, where the Lagrangian is given in terms of the differential invariants and an invariant volume form, we must first introduce a dummy invariant independent variable $x_{p+1}$, where $p$ is the number of independent variables.

The introduction of this new independent variable results in $q$ new invariants $I^\alpha_{p+1}=g\cdot \partial u^\alpha/\partial x_{p+1} |_{g=\rho(\zede)}$ and a set of syzygies $\mathcal{D}_{p+1} \boldsymbol{\kappa}=\mathcal{H} I(\mathbf{u_{p+1}})$, that is
\begin{equation}\label{syzygies}
\mathcal{D}_{p+1} \left(\begin{array}{c}\kappa_1\\ \vdots\\ \kappa_N\end{array}\right)=\mathcal{H}\left(\begin{array}{c} I^1_{p+1}\\ \vdots \\ I^q_{p+1}\end{array}\right),
\end{equation}
where $\mathcal{H}$ is an $N\times q$ matrix of operators depending only on the $\mathcal{D}_i$, for $i=1,...,p$, the $\kappa_j$, for $j=1,...,N$, and their invariant derivatives. Since the independent variables are not necessarily invariant, the operators $\mathcal{D}_i$, for $i=1,...,p$, and  $\mathcal{D}_{p+1}$ do not commute in general.

We know that, symbolically,
$$\left.\frac{\mathrm{d}}{\mathrm{d}\varepsilon}\right|_{\varepsilon=0}\bar{\mathscr{L}}[\mathbf{u}+\varepsilon\mathbf{v}]\quad=\left.\frac{\mathrm{d}}{\mathrm{d} \tau}\right|_{\mathbf{u}_\tau=\mathbf{v}}\bar{\mathscr{L}}[\mathbf{u}].$$

Proceeding as for the calculation of the Euler-Lagrange equations in the original variables, we obtain the following, after differentiating under the integral sign and performing integration by parts,
\begin{align}\nonumber
0=& \,\displaystyle{\mathcal{D}_{p+1} \int L[\boldsymbol{\kappa}] I(\mathrm{d}^p\mathrm{x})}\\ \nonumber
= & \displaystyle{\int \Big[\sum_{j,\mathrm{K}}\frac{\partial L}{\partial \mathcal{D}_\mathrm{K}\kappa_j}\mathcal{D}_{p+1}\mathcal{D}_\mathrm{K}\kappa_j I(\mathrm{d}^px)+L\mathcal{D}_{p+1}(I(\mathrm{d}^px))\Big]}\\ \nonumber
=& \displaystyle{\int \Big[\sum_{j,\mathrm{K}}\frac{\partial L}{\partial \mathcal{D}_\mathrm{K}\kappa_j}\Big(\mathcal{D}_\mathrm{K}\mathcal{D}_{p+1}+\sum_{\ell=1}^{m}\sum_{i=1}^p\mathcal{D}_{\mathrm{K}_\ell}(\mathcal{A}^i_{p+1,k_\ell}\mathcal{D}_i)\mathcal{D}_{\mathrm{K}\backslash(\mathrm{K}_\ell,k_\ell)}\Big)\left(\kappa_j I(\mathrm{d}^p\mathrm{x})\right)+L\mathcal{D}_{p+1}(I(\mathrm{d}^p\mathrm{x}))\Big]}\\ \nonumber
=& \displaystyle{\int \Big[\sum_{j,\mathrm{K}}\Big((-1)^m\mathcal{D}_\mathrm{K}\Big(\frac{\partial L}{\partial \mathcal{D}_\mathrm{K}\kappa_j} I(\mathrm{d}^p\mathrm{x})\Big)\uwave{\mathcal{D}_{p+1}\kappa_j}}\\\nonumber
&\quad\displaystyle{+\frac{\partial L}{\partial \mathcal{D}_\mathrm{K}\kappa_j}\sum_{\ell=1}^{m}\sum_{i=1}^p\mathcal{D}_{\mathrm{K}_\ell}(\mathcal{A}^i_{p+1,k_\ell}\mathcal{D}_i)\mathcal{D}_{\mathrm{K}\backslash (\mathrm{K}_\ell,k_\ell)}\left(\kappa_j I(\mathrm{d}^p\mathrm{x})\right)\Big)}\\\label{oneofmany}
&\quad\displaystyle{+L\sum_{j=1}^p I(\mathrm{d}x_1)...\mathcal{D}_{p+1} I(\mathrm{d}x_j)...I(\mathrm{d}x_p)\Big]+\mbox{B.T.'s,}}
\end{align}
where B.T.'s stands for boundary terms, $m$ is the order of the multi-index of differentiation $\mathrm{K}$, and $\mathrm{K}_\ell$ and $\mathrm{K}\backslash(\mathrm{K}_\ell,k_\ell)$ correspond to the tuples defined in Lemma \ref{lemmaofmcommutators}. Note that we have used Lemma \ref{lemmaofmcommutators} in (\ref{oneofmany}).

Next, we substitute the underlined $\mathcal{D}_{p+1}\kappa_j$ by (\ref{syzygies}) and use Theorem \ref{liederivativeofform} to differentiate the invariant one-forms, which yields
\begin{align}\nonumber
& \displaystyle{0=\int \Big[\sum_{j,\,\mathrm{K}}\Big(\sum_\alpha\Big((-1)^m\mathcal{D}_\mathrm{K}\Big(\frac{\partial L}{\partial \mathcal{D}_\mathrm{K}\kappa_j} I(\mathrm{d}^p\mathrm{x})\Big)\mathcal{H}_{j,\,\alpha}I^\alpha_{p+1} \Big)}\\\label{firstBTs}
&\qquad\displaystyle{+\frac{\partial L}{\partial\mathcal{D}_\mathrm{K}\kappa_j}\sum_{\ell=1}^{m}\sum_{i=1}^p\mathcal{D}_{\mathrm{K}_\ell}(\mathcal{A}^i_{p+1,k_\ell}\mathcal{D}_i)\mathcal{D}_{\mathrm{K}\backslash (\mathrm{K}_\ell,k_\ell)}\kappa_j I(\mathrm{d}^p\mathrm{x})\Big)+L\sum_{j=1}^p\mathcal{B}_{p+1,j}^j I(\mathrm{d}^p\mathrm{x})\Big]+\mbox{B.T.'s.}}
\end{align}
Note that the terms $\mathcal{A}_{p+1,\,k_\ell}^i$, $\mathcal{D}_{\mathrm{K}_\ell}(\mathcal{A}_{p+1,k_\ell}^i)$, and $\mathcal{B}^j_{p+1,j}$ involve sums of terms which include $I^\alpha_{\mathrm{K},p+1}$. Unless $|\mathrm{K}|=0$, then one needs to substitute the $I^\alpha_{\mathrm{K},p+1}$, by their respective differential formulae $\mathcal{D}_\mathrm{K} I^\alpha_{p+1}-M^\alpha_{p+1,\mathrm{K}}$ -- where $M^\alpha_{p+1,\mathrm{K}}$ are the error terms obtained by applying $\mathcal{D}_\mathrm{K}$ to $I^\alpha_{p+1}$. Note that if the $M^\alpha_{p+1,\mathrm{K}}$ involve terms of the form $I^\alpha_{\mathrm{J},p+1}$, then these must also be substituted by their respective differential formulae. Performing a second set of integration by parts to (\ref{firstBTs}) yields
 \begin{equation}\label{secondBTs}
0=\int \Bigg(\sum_\alpha\mathsf{E}^\alpha(L) I^\alpha_{p+1} I(\mathrm{d}^p\mathrm{x})+\sum_{i=1}^p \mathcal{D}_i \Big(\sum_{j=1}^{p+1} F_{ij}\; I(\mathrm{d}x_1)...\widehat{I(\mathrm{d}x_j)}...I(\mathrm{d}x_{p+1})\Big)\Bigg),
\end{equation}
where $\mathsf{E}^\alpha(L)$ are the invariantized Euler-Lagrange equations as defined in (\ref{IELKO}), $F_{ij}$ depend on $I^\alpha_{\mathrm{K},p+1}$ and $I^\alpha_\mathrm{J}$ with $\mathrm{K}$ and $\mathrm{J}$ multi-indices of differentiation with respect to $x_i$, for $i=1,...,p$, and
$$I(\mathrm{d}x_1)...\widehat{I(\mathrm{d}x_j)}...I(\mathrm{d}x_{p+1})= I(\mathrm{d}x_1)...I(\mathrm{d}x_{j-1}) I(\mathrm{d}x_{j+1})...I(\mathrm{d}x_{p+1}).$$

Note that after the second set of integration by parts has been performed in (\ref{firstBTs}), all $p$-forms involving $I(\mathrm{d}x_{p+1})$, which sit outside the boundary terms, have been discarded as there is no integration along $x_{p+1}$. In the next theorem, we will show that the boundary terms of (\ref{secondBTs}) do not contain any $(p-1)$-forms involving $I(\mathrm{d}x_{p+1})$, and therefore as they crop up in the calculation we can simply just discard them. Furthermore, an important point of the next theorem is to show that the resulting boundary terms are linear in $I^\alpha_{\mathrm{K},p+1}$.

\begin{theorem}
The process of calculating the invariantized Euler-Lagrange equations produces boundary terms that can be written as 
\begin{align}
 \displaystyle{\int\sum_{i=1}^p \mathrm{d}\left((-1)^{i-1}\left(\sum_{\mathrm{K},\alpha} I^\alpha_{\mathrm{K},p+1}C^\alpha_{\mathrm{K},i}\right)I(\mathrm{d}x_1)...\widehat{I(\mathrm{d}x_i)}...I(\mathrm{d}x_p)\right),}\label{boundaryterms}
\end{align}
where 
$$ I(\mathrm{d}x_1)...\widehat{I(\mathrm{d}x_i)}...I(\mathrm{d}x_p)= I(\mathrm{d}x_1)... I(\mathrm{d}x_{i-1}) I(\mathrm{d}x_{i+1})... I(\mathrm{d}x_p),$$
$\mathrm{K}$ is a multi-index of differentiation with respect to $x_i$, for $i=1,...,p$, and $C^\alpha_{\mathrm{K},i}$ are functions of $I^\alpha_\mathrm{J}$, with $\mathrm{J}$ a multi-index of differentiation with respect to $x_i$.
\end{theorem}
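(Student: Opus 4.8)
The plan is to resume the argument at Equation (\ref{secondBTs}), where the first variation has already been split into the invariantized Euler--Lagrange part $\sum_\alpha\mathsf{E}^\alpha(L)\,I^\alpha_{p+1}\,I(\mathrm{d}^p\mathbf{x})$ and the boundary part $\sum_{i=1}^p\mathcal{D}_i\big(\sum_{j=1}^{p+1}F_{ij}\,I(\mathrm{d}x_1)\dots\widehat{I(\mathrm{d}x_j)}\dots I(\mathrm{d}x_{p+1})\big)$. Three things have to be established: first, that each $F_{ij}$ is \emph{linear} in the collection $\mathcal{S}=\{\,I^\alpha_{\mathrm{K},p+1}\,\}$, with coefficients that are functions of the ordinary invariants $\{\,I^\alpha_\mathrm{J}\,\}$ only; second, that every $p$-form in the boundary part still carrying the factor $I(\mathrm{d}x_{p+1})$ may be discarded, since the variation involves no integration over $x_{p+1}$ and $I(\mathrm{d}x_{p+1})=\mathrm{d}x_{p+1}$ restricts to zero on $x_{p+1}=\mathrm{const}$; and third, that what remains is $\sum_i\mathrm{d}(\beta_i)$ with the $\beta_i$ the $(p-1)$-forms claimed in (\ref{boundaryterms}).

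For the first point I would follow the computation from its starting point $\mathcal{D}_{p+1}\big(L\,I(\mathrm{d}^p\mathbf{x})\big)$. Because $\mathcal{D}_{p+1}$ is a derivation and $x_{p+1}$ is a dummy variable declared invariant whose normalisation equations contain no $\tau$-derivatives, every frame-built quantity carrying a $p+1$ index is first order in the $\tau$-derivatives of $\mathbf{u}$: this holds for the correction-matrix entries $\mathsf{K}_{p+1,\ell}$ of Theorem \ref{errorterms}, for the commutator coefficients $\mathcal{A}^i_{p+1,k_\ell}$ and the Lie-derivative coefficients $\mathcal{B}^j_{p+1,k}$ of (\ref{commutators}) and Theorem \ref{liederivativeofform}, and, through the syzygy (\ref{syzygies}), for $\mathcal{D}_{p+1}\kappa_j=\sum_\alpha\mathcal{H}_{j\alpha}I^\alpha_{p+1}$; after invariantization and use of (\ref{diffinvariant}), each is $\mathcal{S}$-linear with $\{I^\alpha_\mathrm{J}\}$-coefficients. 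An induction on $|\mathrm{K}|$, using the formula $M^\alpha_{\mathrm{K}j}=\sum_\ell\mathsf{K}_{j\ell}\phi^\alpha_{\mathrm{K},\ell}(I)$ of Theorem \ref{errorterms} together with the recursive re-substitution $I^\alpha_{\mathrm{K},p+1}=\mathcal{D}_\mathrm{K}I^\alpha_{p+1}-M^\alpha_{p+1,\mathrm{K}}$ noted just before the statement, then shows the error terms $M^\alpha_{p+1,\mathrm{K}}$ are themselves $\mathcal{S}$-linear with $\{I^\alpha_\mathrm{J}\}$-coefficients, so $\mathcal{D}_\mathrm{K}$ sends this class into itself. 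Lemma \ref{lemmaofmcommutators} keeps the reordering of $\mathcal{D}_{p+1}\mathcal{D}_\mathrm{K}$ into $\mathcal{D}_\mathrm{K}\mathcal{D}_{p+1}$ plus commutator terms inside the class, and since each of the two rounds of integration by parts leading to (\ref{secondBTs}) only reshuffles the $\mathcal{D}_i$ and applies the Leibniz rule, the class is preserved to the end; hence every $F_{ij}$, in particular $F_{i,p+1}$, is $\mathcal{S}$-linear with coefficients functions of the $\{I^\alpha_\mathrm{J}\}$ alone.

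For the second and third points I would apply the Cartan identity (\ref{bypropofliederiv}) to each summand $\omega=F_{ij}\,I(\mathrm{d}x_1)\dots\widehat{I(\mathrm{d}x_j)}\dots I(\mathrm{d}x_{p+1})$. As $\omega$ has degree $p$ in the $p+1$ one-forms $I(\mathrm{d}x_k)$ and, by the identity $\mathrm{d}f=\sum_{k=1}^{p+1}\mathcal{D}_k(f)I(\mathrm{d}x_k)$ from the proof of Theorem \ref{liederivativeofform} together with (\ref{formulaforliederiv}), the exterior derivative stays within the subalgebra generated by the $I(\mathrm{d}x_k)$, the form $\mathrm{d}\omega$ is of top degree, hence a multiple of $I(\mathrm{d}x_1)\wedge\dots\wedge I(\mathrm{d}x_{p+1})$; contracting, $\mathbf{V}_i\lrcorner\,\mathrm{d}\omega$ is a $p$-form still carrying $I(\mathrm{d}x_{p+1})$ (since $i\le p$) and is discarded, so $\mathcal{D}_i(\omega)\equiv\mathrm{d}\big(\mathbf{V}_i\lrcorner\,\omega\big)$ modulo discarded forms. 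Evaluating $\mathbf{V}_i\lrcorner\,\omega$ with Lemma \ref{lemmadualbasis}: the $j=i$ summand vanishes; the summands with $j\le p$, $j\ne i$, give $(p-1)$-forms still carrying $I(\mathrm{d}x_{p+1})$; and the $j=p+1$ summand gives $(-1)^{i-1}F_{i,p+1}\,I(\mathrm{d}x_1)\dots\widehat{I(\mathrm{d}x_i)}\dots I(\mathrm{d}x_p)$. Since $\mathcal{D}_i\big(I(\mathrm{d}x_{p+1})\big)=\mathbf{V}_i\lrcorner\,\mathrm{d}\big(I(\mathrm{d}x_{p+1})\big)=0$ for every $i$ by Lemma \ref{lasttermdrops} and (\ref{liederivdual}) and the $\mathbf{V}_i$ span the relevant tangent space, $\mathrm{d}\big(I(\mathrm{d}x_{p+1})\big)=0$, so $\mathrm{d}$ of any form carrying $I(\mathrm{d}x_{p+1})$ still carries it and is discarded. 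Summing over $i$, the boundary part thus reduces, modulo discarded forms, to $\sum_{i=1}^p\mathrm{d}\big((-1)^{i-1}F_{i,p+1}\,I(\mathrm{d}x_1)\dots\widehat{I(\mathrm{d}x_i)}\dots I(\mathrm{d}x_p)\big)$, and substituting the linear expansion $F_{i,p+1}=\sum_{\mathrm{K},\alpha}I^\alpha_{\mathrm{K},p+1}C^\alpha_{\mathrm{K},i}$ from the first point, with the $C^\alpha_{\mathrm{K},i}$ functions of the $\{I^\alpha_\mathrm{J}\}$, gives exactly (\ref{boundaryterms}).

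The step I expect to be the genuine obstacle is the linearity claim, and inside it the recursive re-substitution of the $M^\alpha_{p+1,\mathrm{K}}$: one has to check both that the recursion terminates --- each re-substitution should lower the order of the remaining $\tau$-differentiated invariant that must still be rewritten --- and that no stage ever produces a term quadratic in the members of $\mathcal{S}$, i.e.\ quadratic in the perturbation. This is a grading argument of the same character as the finite-generation results for invariant differential algebras cited in the paper; once it is in place, the form-theoretic bookkeeping of the last step is routine.
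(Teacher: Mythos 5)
Your proposal follows essentially the same route as the paper's proof: discard every form still carrying $I(\mathrm{d}x_{p+1})$ because there is no integration along the dummy variable, use the Cartan formula together with Lemmas \ref{lemmadualbasis} and \ref{lasttermdrops} to reduce $\mathcal{D}_i$ of the surviving $j=p+1$ summand to $\mathrm{d}\bigl((-1)^{i-1}F_{i,p+1}\,I(\mathrm{d}x_1)\dots\widehat{I(\mathrm{d}x_i)}\dots I(\mathrm{d}x_p)\bigr)$, and then expand $F_{i,p+1}$ linearly in the $I^\alpha_{\mathrm{K},p+1}$. The step you flag as the genuine obstacle, the linearity of $F_{i,p+1}$, is treated by your grading/induction argument at least as carefully as by the paper's brief justification, which only observes that the coefficients $\mathcal{B}^k_{ij}$ with $k\neq p+1$ appearing in the relevant products never involve invariants of the form $I^\gamma_{\mathrm{L},p+1}$.
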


\begin{proof}
Consider the boundary terms in (\ref{secondBTs})
\begin{equation}\label{BT1}
\int \sum_{i=1}^p \mathcal{D}_i \left(\sum_{j=1}^{p+1} F_{ij}\; I(\mathrm{d}x_1)...\widehat{I(\mathrm{d}x_j)}...I(\mathrm{d}x_{p+1})\right).
\end{equation}
Since $\mathcal{D}_i$ is a derivation, we obtain
\begin{align}\nonumber
&\displaystyle{\mathcal{D}_i \left(\sum_{j=1}^{p+1} F_{ij}\; I(\mathrm{d}x_1)...\widehat{I(\mathrm{d}x_j)}...I(\mathrm{d}x_{p+1})\right)}\\ \label{oneBTterm}
&\quad\displaystyle{=\sum_{j=1}^{p+1}\left(\mathcal{D}_i (F_{ij})I(\mathrm{d}x_1)...\widehat{I(\mathrm{d}x_j)}...I(\mathrm{d}x_{p+1})+F_{ij}\mathcal{D}_i(I(\mathrm{d}x_1)...\widehat{I(\mathrm{d}x_j)}...I(\mathrm{d}x_{p+1}))\right).}
\end{align}
For $j=1,...,p+1$, $\mathcal{D}_i(I(\mathrm{d}x_1)...\widehat{I(\mathrm{d}x_j)}...I(\mathrm{d}x_{p+1}))$ in (\ref{oneBTterm}) can be written as
\begin{equation}\label{liederivofpform}
\mathcal{D}_i(I(\mathrm{d}x_1))...\widehat{I(\mathrm{d}x_j)}...I(\mathrm{d}x_{p+1})+\cdots+I(\mathrm{d}x_1)...\widehat{I(\mathrm{d}x_j)}...\mathcal{D}_i(I(\mathrm{d}x_{p+1})).
\end{equation}
For $j=1,...,p$, the last term in (\ref{liederivofpform}) is zero by Lemma \ref{lasttermdrops}, also all remaining terms in (\ref{liederivofpform}) disappear as they all possess a $I(\mathrm{d}x_{p+1})$ form and there is no integration along $x_{p+1}$.  

Furthermore, for $j=1,...,p$, the terms $\mathcal{D}_i (F_{ij})I(\mathrm{d}x_1)...\widehat{I(\mathrm{d}x_j)}...I(\mathrm{d}x_{p+1})$ in (\ref{oneBTterm}) disappear as there is no integration along $x_{p+1}$. Hence, (\ref{oneBTterm}) reduces to 
\begin{align}\nonumber
&\mathcal{D}_i (F_{i,p+1})I(\mathrm{d}^p\mathrm{x})+F_{i,p+1}\mathcal{D}_i(I(\mathrm{d}^p\mathrm{x}))\\\nonumber
&\quad=\mathcal{D}_i(F_{i,p+1}I(\mathrm{d}^p\mathrm{x}))\\
&\quad=\mathrm{d}(\mathbf{V}_i\lrcorner\, F_{i,p+1} I(\mathrm{d}^p\mathrm{x}))+\mathbf{V}_i\lrcorner\,\mathrm{d}(F_{i,p+1} I(\mathrm{d}^p\mathrm{x})).\label{simplofvolform}
\end{align}
The invariant volume form, $I(\mathrm{d}^p\mathrm{x})$, can be written as $|\mathcal{J}|\, \mathrm{d}^p\mathrm{x}$, where as before $\mathcal{J}=\mathrm{d}\widetilde{\mathbf{x}}/\mathrm{d}\mathbf{x}|_{g=\rho(\zede)}$, and therefore (\ref{simplofvolform}) becomes
$$\displaystyle{\mathrm{d}((-1)^{i-1}F_{i,p+1} I(\mathrm{d}x_1)...\widehat{I(\mathrm{d}x_i)}...I(\mathrm{d}x_p))+\mathbf{V}_i\lrcorner\,\frac{\partial \left(F_{i,p+1} |\mathcal{J}|\right)}{\partial x_{p+1}} \mathrm{d}x_{p+1}\,\mathrm{d}^p\mathrm{x}}.$$
Since $\mathcal{D}_i$, which is associated to $\mathbf{V}_i$, does not involve any $D_{p+1}$, we will be left in the second summand with a form involving $\mathrm{d}x_{p+1}$ and as there is no integration along $x_{p+1}$ we obtain
\begin{equation}
\mathrm{d}((-1)^{i-1}F_{i,p+1} I(\mathrm{d}x_1)...\widehat{I(\mathrm{d}x_i)}...I(\mathrm{d}x_p)).
\end{equation}
From Theorem \ref{liederivativeofform}, we know that $\mathcal{B}^k_{ij}=\mathcal{A}^i_{jk}$, which is equal to $\sum_{\ell=1}^r \mathsf{K}_{k\ell}\Xi^i_{\ell j}-\mathsf{K}_{j\ell}\Xi^i_{\ell k}$. Since some of the terms in $F_{i,p+1}$ are products of the form $I^\alpha_{\mathrm{K},p+1}I^\beta_\mathrm{J}\mathcal{B}^k_{ij}$, where $k\ne p+1$, and the $\mathcal{B}^k_{ij}$  in these products never involve invariants of the form $I^\gamma_{\mathrm{L},p+1}$, the $F_{i,p+1}$ are linear combinations of the $I^\alpha_{\mathrm{K},p+1}$.

Thus, the boundary terms (\ref{BT1}) simplify to
\begin{align}\nonumber
&\displaystyle{\int\sum_{i=1}^p \mathrm{d}((-1)^{i-1}F_{i,p+1} I(\mathrm{d}x_1)...\widehat{I(\mathrm{d}x_i)}...I(\mathrm{d}x_p))}\\
&\displaystyle{\quad= \int\sum_{i=1}^p \mathrm{d}\left((-1)^{i-1}\left(\sum_{\mathrm{K},\alpha} I^\alpha_{\mathrm{K},p+1}C^\alpha_{\mathrm{K},i}\right)I(\mathrm{d}x_1)...\widehat{I(\mathrm{d}x_i)}...I(\mathrm{d}x_p)\right),}\label{naohamais}
\end{align}
where $C^\alpha_{\mathrm{K},i}$ are coefficients of the $I^\alpha_{\mathrm{K},p+1}$.
$\hfill \Box$
\end{proof}

\begin{example}\label{exampleeleqnbts} 
Consider the variational problem $\displaystyle{\iint u(u_{xx}u_{yy}-u_{xy}^2)\,\mathrm{d}x\mathrm{d}y}$, which is invariant under the action presented in Example \ref{linearsl2}. To find the invariantized Euler-Lagrange equation, introduce a dummy invariant independent variable $\tau$ and set $u=u(x,y,\tau)$. The introduction of this new independent variable results in the new invariant $\widetilde{u_\tau}|_{g=\rho(\zede)}=I^u_3$ and a set of syzygies, as computed in Example \ref{examplewithplot}. Rewriting the above variational problem in terms of the invariants of the group action yields
$$\iint I^u(I^u_{11}I^u_{22}-(I^u_{12})^2)I(\mathrm{d}x)I(\mathrm{d}y).$$

In the process of calculating the invariantized Euler-Lagrange equation and its boundary terms, we differentiate under the integral sign and obtain
\begin{equation*}\begin{array}{l}
\displaystyle{\mathcal{D}_\tau \iint  I^u(I^u_{11}I^u_{22}-(I^u_{12})^2)I(\mathrm{d}x)I(\mathrm{d}y)}\\
\displaystyle{= \iint \Big[\Big(\mathcal{D}_\tau (I^u)(I^u_{11}I^u_{22}-(I^u_{12})^2)+I^uI^u_{22}\mathcal{D}_\tau I^u_{11}+I^uI^u_{11}\mathcal{D}_\tau I^u_{22}}\\
\displaystyle{-2I^uI^u_{12}\mathcal{D}_\tau I^u_{12}\Big)I(\mathrm{d}x)I(\mathrm{d}y)+I^u(I^u_{11}I^u_{22}-(I^u_{12})^2)\mathcal{D}_\tau(I(\mathrm{d}x)I(\mathrm{d}y))\Big].}
\end{array}
\end{equation*}

Using Table \ref{tableofliederiv} we find that $\mathcal{D}_\tau(I(\mathrm{d}x)I(\mathrm{d}y))=0$. Then substituting $\mathcal{D}_\tau I^u_{11}$, $\mathcal{D}_\tau I^u_{22}$, and $\mathcal{D}_\tau I^u_{12}$ by (\ref{sysIu11}), (\ref{sysIu22}),  and (\ref{sysIu12a}), respectively, and performing integration by parts yields
$$\begin{array}{l}
\displaystyle{\iint 3\left(I^u_{11}I^u_{22}-(I^u_{12})^2\right)I^u_3 I(\mathrm{d}x)I(\mathrm{d}y)}\\
\displaystyle{\quad+\iint \Big[\mathcal{D}_x \left(\left(\left(I^uI^u_{22}-I^u_1 I^u_{22}+I^uI^u_{122}-\frac{I^uI^u_{11}I^u_{22}}{I^u_1}\right)I^u_3+I^u I^u_{22} I^u_{13}\right)I(\mathrm{d}x)I(\mathrm{d}y)\right)}\\
\displaystyle{\;\;\quad\qquad+\mathcal{D}_y \left(\left(\left(\frac{I^uI^u_{11}I^u_{12}}{I^u_1}-I^uI^u_{112}\right) I^u_3-2I^u I^u_{12} I^u_{13}+I^u I^u_{11} I^u_{23}\right)I(\mathrm{d}x)I(\mathrm{d}y)\right)\Big]},
\end{array}$$
where all forms involving $I(\mathrm{d}\tau)$ have been discarded as there is no integration along $\tau$. Thus, we obtain the invariantized Euler-Lagrange equation
$$\mathsf{E}^u(L)=3\left(I^u_{11}I^u_{22}-(I^u_{12})^2\right)=3(u_{xx}u_{yy}-u_{xy}^2),$$
as expected, and according to (\ref{naohamais}), the boundary terms can be written as
\begin{eqnarray}\nonumber
\displaystyle{\iint \mathrm{d}\Bigg(\left(\left(I^uI^u_{22}-I^u_1 I^u_{22}+I^uI^u_{122}-\frac{I^uI^u_{11}I^u_{22}}{I^u_1}\right)I^u_3+I^u I^u_{22} I^u_{13}\right)I(\mathrm{d}y)}\\ \label{vectorsC-alpha-i}
\displaystyle{-\left(\left(\frac{I^uI^u_{11}I^u_{12}}{I^u_1}-I^uI^u_{112}\right) I^u_3-2I^u I^u_{12} I^u_{13}+I^u I^u_{11} I^u_{23}\right)I(\mathrm{d}x)\Bigg),}
\end{eqnarray}
where the summands are linear in the $I^\alpha_{\mathrm{K}3}$ as expected. We will continue this example and obtain the conservation laws, see Example \ref{illustration}.

Finding the Euler-Lagrange equation in the original variables for this particular variational problem is a simple task and in this case, the invariantized version of the Euler-Lagrange equation does not simplify its calculation. However, the conservation laws contain many terms and using invariants to rewrite them, reduces them. We note that we have not used the translation invariance of this Lagrangian, and indeed we could have used the equiaffine action to study this problem. This would have led to three normalized derivative terms instead of just the one. However, we would also have had three generating differential invariants and two generating syzygies.
\end{example}

\begin{remark}
Note that in Example \ref{exampleeleqnbts} we could have substituted $\mathcal{D}_\tau I^u_{12}$ by Equation (\ref{sysIu12b}) instead of Equation (\ref{sysIu12a}), or we could even have used a combination of the two; in any case, no matter which syzygy is used the seemingly different boundary terms yield equivalent conservation laws.
\end{remark}

\section{Structure of Noether's conservation laws}\label{structureNCL}
In \cite{GoncalvesMansfield} it was shown that, for invariant Lagrangians that may be parametrized so that the independent variables are each invariant under the group action, Noether's conservation laws could be written in terms of the differential invariants of the group action and the adjoint representation of a moving frame for the Lie group action. Here we generalise this result to variational problems with independent variables that are not invariant; in this case Noether's conservation laws have a similar form as the ones presented in \cite{GoncalvesMansfield}, but with an extra factor -- the matrix representing  the group action on the space of $(p-1)$-forms, where $p$ is the number of independent variables.

\begin{example}\label{conservationlawsmonge}
Consider the $SL(2)$ action as in Example \ref{linearsl2} and the variational problem of Example \ref{exampleeleqnbts}. Applying Noether's Theorem to the variational problem and rewriting the three conservation laws in terms of the differential invariants of the group action yields
\begin{equation*}
\mathrm{d}\left(\bordermatrix{ & &\mathcal{A}d(\rho)^{-1} & \cr 
a &\displaystyle{\frac{xu_x-yu_y}{xu_x+yu_y}} & \displaystyle{-\frac{2u_xu_y}{(xu_x+yu_y)^2}} & -2xy \cr
b &\displaystyle{\frac{yu_x}{xu_x+yu_y}} & \displaystyle{\frac{u_x^2}{(xu_x+yu_y)^2}} & -y^2 \cr
c &\displaystyle{\frac{xu_y}{xu_x+yu_y}} & \displaystyle{-\frac{u_y^2}{(xu_x+yu_y)^2}} & x^2\cr}\bordermatrix{ & \boldsymbol{\upsilon}_1 & \boldsymbol{\upsilon}_2 \cr
& I^u_1I^u_{22}(I^u-I^u_1) &  I^u_1I^u_{12}(I^u-I^u_1) \cr
& -I^u I^u_1 I^u_{12} &  -I^u I^u_1 I^u_{11} \cr
& 0 & 0\cr}\right.
 \end{equation*}
 \begin{equation}\label{conslawsMA}
\left. \times \underbrace{\left(\begin{array}{cc}
 x & -y\\
 \displaystyle{\frac{u_y}{xu_x+yu_y}} &  \displaystyle{\frac{u_x}{xu_x+yu_y}}
 \end{array}\right)}_{\mathlarger{\mathsf{M}_{\mathcal{J}}}}\underbrace{\left(\begin{array}{c}
 \mathrm{d}y\\
 \mathrm{d}x
 \end{array}\right)}_{\mathlarger{\mathrm{d}^1\widehat{\mathbf{x}}}}\right)=0,
 \end{equation}
where $\mathcal{A}d(\rho)^{-1}$ is the inverse of the Adjoint representation of $SL(2)$ with respect to its generating vector fields evaluated at the frame (\ref{framesl2}), $\boldsymbol{\upsilon}_1$ and $\boldsymbol{\upsilon}_2$ are vectors of invariants, and $\mathsf{M}_{\mathcal{J}}$ is the matrix of first minors of the Jacobian matrix $\mathcal{J}$, as defined in the proof of Lemma \ref{lemmadualbasis}, evaluated at the frame  (\ref{framesl2}). \textcolor{musgo}The quantity $\mathsf{M}_\mathcal{J}\mathrm{d}^1\widehat{\mathbf{x}}$ is in fact invariant, as will be shown in the proof of Theorem \ref{conslawsnoninvindvar}, Equation (\ref{invariantp_1forms}).
\end{example}

\subsection{The  group action on the conservation laws}
Before we proceed to generalising the result in \cite{GoncalvesMansfield}, we shall look in detail at the group action on the conservation laws, for which we will need the following definitions and identities.

\begin{definition}
The \textnormal{Adjoint action} $Ad$ of $g\in G$ on the vector field $\mathbf{v}_j=\sum_{\alpha,i} (\xi^i_j\partial_{x_i}+\phi^\alpha_j\partial_{u^\alpha})$ is given as follows
$$Ad_g\left(\sum_{\alpha,i}(\xi^i_j\partial_{x_i}+\phi^\alpha_j\partial_{u^\alpha})\right)=\sum_{\alpha,i}(\xi^i_j(\widetilde{\mathbf{x}},\widetilde{\mathbf{u}})\partial_{\widetilde{x_i}}+\phi^\alpha_j(\widetilde{\mathbf{x}},\widetilde{\mathbf{u}})\partial_{\widetilde{u^\alpha}}),$$
so that
\begin{equation}\label{definitionxiphis}
\left(\begin{array}{cc}Ad(\Xi_j) & Ad(\Phi_j)\end{array}\right)=\left(\begin{array}{cc} \Xi_j(\widetilde{\mathbf{x}},\widetilde{\mathbf{u}}) & \Phi_j(\widetilde{\mathbf{x}},\widetilde{\mathbf{u}})\end{array}\right)\left(\dfrac{\partial(\widetilde{\mathbf{x}},\widetilde{\mathbf{u}})}{\partial(\mathbf{x},\mathbf{u})}\right)^{-T},
\end{equation}
with $\Xi_j=(\xi^1_j,...,\xi^p_j)$ and $\Phi_j=(\phi^1_j,...,\phi^q_j)$, and for all $\mathbf{v}_j$, by Theorem 3.3.10 of \cite{Mansfield}, we have that
\begin{equation}
\mathcal{A}d(g)\left(\begin{array}{cc}\Xi(\mathbf{x},\mathbf{u}) & \Phi(\mathbf{x},\mathbf{u})\end{array}\right)=\left(\begin{array}{cc} \Xi(\widetilde{\mathbf{x}},\widetilde{\mathbf{u}}) & \Phi(\widetilde{\mathbf{x}},\widetilde{\mathbf{u}})\end{array}\right)\left(\dfrac{\partial(\widetilde{\mathbf{x}},\widetilde{\mathbf{u}})}{\partial(\mathbf{x},\mathbf{u})}\right)^{-T},
\end{equation}
where $\mathcal{A}d(g)$ is an $r\times r$ matrix, giving the Adjoint action, depending only on the group parameters, with $r=\dim(G)$.
\end{definition}

\begin{lemma}
Let $\mathbf{x}=(x_1,...,x_p)$ and $\mathbf{u}(\mathbf{x})=(u^1(\mathbf{x}),...,u^q(\mathbf{x}))$. The $q \times p$ matrix $\partial \mathbf{u}/\partial \mathbf{x}$ can be written as
\begin{equation}\label{partialderivative}
\dfrac{\partial \mathbf{u}}{\partial \mathbf{x}}=\left(\dfrac{\partial\widetilde{\mathbf{u}}}{\partial \mathbf{u}}-\dfrac{\mathrm{d}\widetilde{\mathbf{u}}}{\mathrm{d}\widetilde{\mathbf{x}}}\dfrac{\partial \widetilde{\mathbf{x}}}{\partial \mathbf{u}}\right)^{-1}\left(\dfrac{\mathrm{d}\widetilde{\mathbf{u}}}{\mathrm{d}\widetilde{\mathbf{x}}}\dfrac{\partial \widetilde{\mathbf{x}}}{\partial \mathbf{x}}-\dfrac{\partial\widetilde{\mathbf{u}}}{\partial \mathbf{x}}\right).
\end{equation}
\end{lemma}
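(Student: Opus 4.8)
The claim is a purely algebraic identity relating the various blocks of the full Jacobian $\partial(\widetilde{\mathbf{x}},\widetilde{\mathbf{u}})/\partial(\mathbf{x},\mathbf{u})$ once one remembers that, along a section $\mathbf{x}\mapsto(\mathbf{x},\mathbf{u}(\mathbf{x}))$, the prolonged variables $\widetilde{\mathbf{x}}$ and $\widetilde{\mathbf{u}}$ become functions of $\mathbf{x}$ alone. The plan is to apply the chain rule twice and then solve a linear system. Write the total derivative of $\widetilde{\mathbf{u}}$ with respect to $\mathbf{x}$ in two ways: directly as $\mathrm{d}\widetilde{\mathbf{u}}/\mathrm{d}\mathbf{x}$, and via $\widetilde{\mathbf{x}}$ as $(\mathrm{d}\widetilde{\mathbf{u}}/\mathrm{d}\widetilde{\mathbf{x}})(\mathrm{d}\widetilde{\mathbf{x}}/\mathrm{d}\mathbf{x})$, so that
$$\frac{\mathrm{d}\widetilde{\mathbf{u}}}{\mathrm{d}\mathbf{x}}=\frac{\mathrm{d}\widetilde{\mathbf{u}}}{\mathrm{d}\widetilde{\mathbf{x}}}\,\frac{\mathrm{d}\widetilde{\mathbf{x}}}{\mathrm{d}\mathbf{x}}.$$

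Next I would expand each total derivative $\mathrm{d}/\mathrm{d}\mathbf{x}$ appearing here into its explicit $\partial/\partial\mathbf{x}$ and $\partial/\partial\mathbf{u}$ pieces, using $\mathrm{d}\mathbf{u}/\mathrm{d}\mathbf{x}=\partial\mathbf{u}/\partial\mathbf{x}$ since $\mathbf{u}=\mathbf{u}(\mathbf{x})$. Thus $\mathrm{d}\widetilde{\mathbf{u}}/\mathrm{d}\mathbf{x}=\partial\widetilde{\mathbf{u}}/\partial\mathbf{x}+(\partial\widetilde{\mathbf{u}}/\partial\mathbf{u})(\partial\mathbf{u}/\partial\mathbf{x})$, and likewise $\mathrm{d}\widetilde{\mathbf{x}}/\mathrm{d}\mathbf{x}=\partial\widetilde{\mathbf{x}}/\partial\mathbf{x}+(\partial\widetilde{\mathbf{x}}/\partial\mathbf{u})(\partial\mathbf{u}/\partial\mathbf{x})$. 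Substituting these into the chain-rule identity above gives a matrix equation whose only unknown is $\partial\mathbf{u}/\partial\mathbf{x}$; gathering the terms containing $\partial\mathbf{u}/\partial\mathbf{x}$ on one side yields
$$\left(\frac{\partial\widetilde{\mathbf{u}}}{\partial\mathbf{u}}-\frac{\mathrm{d}\widetilde{\mathbf{u}}}{\mathrm{d}\widetilde{\mathbf{x}}}\frac{\partial\widetilde{\mathbf{x}}}{\partial\mathbf{u}}\right)\frac{\partial\mathbf{u}}{\partial\mathbf{x}}=\frac{\mathrm{d}\widetilde{\mathbf{u}}}{\mathrm{d}\widetilde{\mathbf{x}}}\frac{\partial\widetilde{\mathbf{x}}}{\partial\mathbf{x}}-\frac{\partial\widetilde{\mathbf{u}}}{\partial\mathbf{x}}.$$
Multiplying on the left by the inverse of the bracketed $q\times q$ matrix gives exactly (\ref{partialderivative}).

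The only genuine obstacle is the invertibility of the $q\times q$ matrix $\partial\widetilde{\mathbf{u}}/\partial\mathbf{u}-(\mathrm{d}\widetilde{\mathbf{u}}/\mathrm{d}\widetilde{\mathbf{x}})(\partial\widetilde{\mathbf{x}}/\partial\mathbf{u})$, which is needed to isolate $\partial\mathbf{u}/\partial\mathbf{x}$; I would note that this matrix is the Schur complement of the $\widetilde{\mathbf{x}}$-block in the full prolonged Jacobian $\partial(\widetilde{\mathbf{x}},\widetilde{\mathbf{u}})/\partial(\mathbf{x},\mathbf{u})$ (after the appropriate reshuffling), and is invertible precisely on the domain where the moving-frame construction is valid, i.e. wherever the group action is regular and the section is generic — so it is invertible on the same open set $\mathcal{U}$ on which everything else in the paper is being done. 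Beyond that, the derivation is a routine chain-rule-and-rearrange computation, so I would simply record the two chain-rule expansions and the one line of linear algebra.
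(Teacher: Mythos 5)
Your proposal is correct and follows exactly the paper's own argument: expand the chain-rule identity $\frac{\mathrm{d}\widetilde{\mathbf{u}}}{\mathrm{d}\mathbf{x}}=\frac{\mathrm{d}\widetilde{\mathbf{u}}}{\mathrm{d}\widetilde{\mathbf{x}}}\frac{\mathrm{d}\widetilde{\mathbf{x}}}{\mathrm{d}\mathbf{x}}$ using $\frac{\mathrm{d}\widetilde{\mathbf{z}}}{\mathrm{d}\mathbf{x}}=\frac{\partial\widetilde{\mathbf{z}}}{\partial\mathbf{x}}+\frac{\partial\widetilde{\mathbf{z}}}{\partial\mathbf{u}}\frac{\partial\mathbf{u}}{\partial\mathbf{x}}$ and collect the terms in $\frac{\partial\mathbf{u}}{\partial\mathbf{x}}$. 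Your additional remark on the invertibility of the bracketed matrix is a point the paper leaves implicit, but it does not change the substance of the argument.
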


\begin{proof}
We have 
$$\dfrac{\mathrm{d}\widetilde{\mathbf{u}}}{\mathrm{d}\widetilde{\mathbf{x}}}\dfrac{\mathrm{d}\widetilde{\mathbf{x}}}{\mathrm{d}\mathbf{x}}=\dfrac{\mathrm{d}\widetilde{\mathbf{u}}}{\mathrm{d}\mathbf{x}}$$
and
$$\dfrac{\mathrm{d}\widetilde{\mathbf{z}}}{\mathrm{d}\mathbf{x}}=\dfrac{\partial \widetilde{\mathbf{z}}}{\partial \mathbf{x}}+\dfrac{\partial \widetilde{\mathbf{z}}}{\partial \mathbf{u}}\dfrac{\partial \mathbf{u}}{\partial \mathbf{x}},\qquad \mathbf{z}=\mathbf{x},\mathbf{u}.$$
The result follows from expanding the first equation, and collecting terms in $\partial\mathbf{u}/\partial\mathbf{x}$. $\hfill \Box$
\end{proof}

\begin{definition}\label{character}
Given the vector field $\mathbf{v}_j=\sum_{\alpha,i}(\xi^i_j\partial_{x_i}+\phi^\alpha_j\partial_{u^\alpha})$, the column vector $\mathbf{Q}_j$ with components
$$Q^\alpha_j(\mathbf{x},\mathbf{u},\mathbf{u_x})=\phi^\alpha_j(\mathbf{x},\mathbf{u})-\sum_{i=1}^pu^\alpha_i\xi^i_j(\mathbf{x},\mathbf{u}),\quad \alpha=1,...,q,$$
is referred to as the \textnormal{characteristic} of the vector field $\mathbf{v}_j$.
\end{definition}

Letting $g\in G$ act on $\mathbf{Q}_j$, we have
$$\mathbf{Q}_j(\widetilde{\mathbf{x}},\widetilde{\mathbf{u}},\widetilde{\mathbf{u_x}})=\left(\begin{array}{cc}-\dfrac{\mathrm{d}\widetilde{\mathbf{u}}}{\mathrm{d}\widetilde{\mathbf{x}}} & I_q\end{array}\right)\left(\begin{array}{c}\Xi^T_j(\widetilde{\mathbf{x}},\widetilde{\mathbf{u}}) \\ \Phi^T_j(\widetilde{\mathbf{x}},\widetilde{\mathbf{u}})\end{array}\right).$$
Using (\ref{definitionxiphis}) and (\ref{partialderivative}) this can be written as
\begin{eqnarray}\nonumber
\mathbf{Q}_j(\widetilde{\mathbf{x}},\widetilde{\mathbf{u}},\widetilde{\mathbf{u_x}})&\kern-8pt=&\kern-8pt\left(\dfrac{\partial\widetilde{\mathbf{u}}}{\partial \mathbf{u}}-\dfrac{\mathrm{d}\widetilde{\mathbf{u}}}{\mathrm{d}\widetilde{\mathbf{x}}}\dfrac{\partial \widetilde{\mathbf{x}}}{\partial \mathbf{u}}\right)\left(Ad(\Phi^T_j)-\dfrac{\partial \mathbf{u}}{\partial \mathbf{x}}Ad(\Xi^T_j)\right) \\\label{actionQ}
&\kern-8pt=&\kern-8pt \left(\dfrac{\partial\widetilde{\mathbf{u}}}{\partial \mathbf{u}}-\dfrac{\mathrm{d}\widetilde{\mathbf{u}}}{\mathrm{d}\widetilde{\mathbf{x}}}\dfrac{\partial \widetilde{\mathbf{x}}}{\partial \mathbf{u}}\right)
Ad(\mathbf{Q}_j)
\end{eqnarray}
where this defines
\begin{equation}\label{AdQdefn}
Ad(\mathbf{Q}_j)=Ad(\Phi^T_j)-\dfrac{\partial \mathbf{u}}{\partial \mathbf{x}}Ad(\Xi^T_j).
\end{equation}

The following lemma provides a result on the action of an element $g\in G$ on the $p-1$-forms, which will be needed to determine the action on Noether's conservation laws.

\begin{lemma}\label{lemmaaboutcoeff}
If 
$$(-1)^{k-1} \mathrm{d}\widetilde{x_1}...\widehat{\mathrm{d}\widetilde{x_k}}...\mathrm{d}\widetilde{x_p}=\sum_{\ell=1}^p (-1)^{\ell-1} Z^k_\ell \mathrm{d}x_1...\widehat{\mathrm{d}x_\ell}...\mathrm{d}x_p$$
defines $Z^k_\ell$, then
\begin{equation}
(-1)^{\ell-1} Z^k_\ell=\left(\left(\dfrac{\mathrm{d}\widetilde{\mathbf{x}}}{\mathrm{d}\mathbf{x}}\right)^{-1}\right)_{\ell k}\det\left(\dfrac{\mathrm{d}\widetilde{\mathbf{x}}}{\mathrm{d}\mathbf{x}}\right).
\end{equation}
\end{lemma}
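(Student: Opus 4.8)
The plan is to convert the defining relation for the coefficients $Z^k_\ell$ into a single invertible linear system, by wedging it with the one-forms $\mathrm{d}\widetilde{x}_n$ and comparing coefficients of the volume form $\mathrm{d}x_1\wedge\cdots\wedge\mathrm{d}x_p$. Throughout write $\mathcal{J}=\mathrm{d}\widetilde{\mathbf{x}}/\mathrm{d}\mathbf{x}$, so that $\mathrm{d}\widetilde{x}_i=\sum_{j=1}^p\mathcal{J}_{ij}\,\mathrm{d}x_j$, and keep in hand two elementary facts: $\mathrm{d}\widetilde{x}_1\wedge\cdots\wedge\mathrm{d}\widetilde{x}_p=\det(\mathcal{J})\,\mathrm{d}x_1\wedge\cdots\wedge\mathrm{d}x_p$, and $\mathrm{d}x_m\wedge\big(\mathrm{d}x_1\wedge\cdots\wedge\widehat{\mathrm{d}x_\ell}\wedge\cdots\wedge\mathrm{d}x_p\big)$ equals $(-1)^{\ell-1}\,\mathrm{d}x_1\wedge\cdots\wedge\mathrm{d}x_p$ for $m=\ell$ and vanishes for $m\ne\ell$ (and similarly with the $\widetilde{x}$'s).

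First I would wedge the defining identity on the left by $\mathrm{d}\widetilde{x}_n$, for each $n=1,\dots,p$. On the left-hand side, if $n\ne k$ the factor $\mathrm{d}\widetilde{x}_n$ already occurs in $\mathrm{d}\widetilde{x}_1\wedge\cdots\wedge\widehat{\mathrm{d}\widetilde{x}_k}\wedge\cdots\wedge\mathrm{d}\widetilde{x}_p$, so the wedge vanishes; if $n=k$ the product becomes $\mathrm{d}\widetilde{x}_1\wedge\cdots\wedge\mathrm{d}\widetilde{x}_p$, and after absorbing the two $(-1)^{k-1}$ factors (the one in the statement and the one from bringing $\mathrm{d}\widetilde{x}_k$ to the front) this contributes exactly $\det(\mathcal{J})\,\mathrm{d}x_1\wedge\cdots\wedge\mathrm{d}x_p$. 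On the right-hand side, substituting $\mathrm{d}\widetilde{x}_n=\sum_m\mathcal{J}_{nm}\,\mathrm{d}x_m$, only the $m=\ell$ term survives each wedge with $\mathrm{d}x_1\wedge\cdots\wedge\widehat{\mathrm{d}x_\ell}\wedge\cdots\wedge\mathrm{d}x_p$, and it brings a factor $(-1)^{\ell-1}$ that cancels the one already multiplying $Z^k_\ell$. Comparing coefficients of $\mathrm{d}x_1\wedge\cdots\wedge\mathrm{d}x_p$ then gives, for all $n,k$,
\begin{equation*}
\sum_{\ell=1}^p \mathcal{J}_{n\ell}\,Z^k_\ell=\delta_{nk}\,\det(\mathcal{J}),
\end{equation*}
i.e. $\mathcal{J}\,Z=\det(\mathcal{J})\,\mathrm{Id}$, where $Z$ denotes the $p\times p$ matrix with entry $Z^k_\ell$ in row $\ell$, column $k$. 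Since $\mathcal{J}$ is invertible on the domain where the frame is defined, $Z=\det(\mathcal{J})\,\mathcal{J}^{-1}$, which identifies the $Z^k_\ell$ with the entries of $\det(\mathrm{d}\widetilde{\mathbf{x}}/\mathrm{d}\mathbf{x})\,(\mathrm{d}\widetilde{\mathbf{x}}/\mathrm{d}\mathbf{x})^{-1}$ -- equivalently, the adjugate of $\mathcal{J}$, i.e. the matrix of first minors of $\mathcal{J}$ up to sign and transposition, which is the assertion of the lemma (and the matrix $\mathsf{M}_{\mathcal{J}}$ appearing in Example~\ref{conservationlawsmonge}).

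An equivalent and slightly slicker derivation avoids the wedging: the left-hand side $(-1)^{k-1}\mathrm{d}\widetilde{x}_1\wedge\cdots\wedge\widehat{\mathrm{d}\widetilde{x}_k}\wedge\cdots\wedge\mathrm{d}\widetilde{x}_p$ is the interior product $\partial_{\widetilde{x}_k}\lrcorner\big(\mathrm{d}\widetilde{x}_1\wedge\cdots\wedge\mathrm{d}\widetilde{x}_p\big)$, and likewise for the $\mathrm{d}x$'s; substituting $\mathrm{d}\widetilde{x}_1\wedge\cdots\wedge\mathrm{d}\widetilde{x}_p=\det(\mathcal{J})\,\mathrm{d}x_1\wedge\cdots\wedge\mathrm{d}x_p$ together with $\partial_{\widetilde{x}_k}=\sum_\ell(\mathcal{J}^{-1})_{\ell k}\,\partial_{x_\ell}$ and invoking linearity of $\lrcorner$ produces the identity in one step. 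In either route the one genuinely delicate point is the sign accounting -- tracking the factors $(-1)^{k-1}$ and $(-1)^{\ell-1}$ generated when $\mathrm{d}\widetilde{x}_k$, respectively $\mathrm{d}x_\ell$, is permuted into the leading slot, and confirming that the paired occurrences of $(-1)^{\ell-1}$ cancel so that the final relation involves only $\mathcal{J}$, $\mathcal{J}^{-1}$ and $\det(\mathcal{J})$.
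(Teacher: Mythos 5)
Your proposal is correct and follows essentially the same route as the paper's own proof: wedge the defining relation with $\mathrm{d}\widetilde{x_j}$, observe that the result is $\delta_{jk}\det(\mathcal{J})$ times the volume form, and invert the resulting linear system to identify $Z$ with $\det(\mathcal{J})\,\mathcal{J}^{-1}$. Your sign bookkeeping is in fact more careful than the paper's (whose displayed proof conflates $(-1)^{k-1}$ and $(-1)^{\ell-1}$ at one point), and the interior-product reformulation you sketch is a legitimate shortcut to the same identity.
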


The proof of this lemma can be found in Appendix \ref{apendice}.

\begin{theorem}\label{adjointactioncons}
Let $\mathscr{L}[\mathbf{u}]=\int_\Omega L(\mathbf{x},\mathbf{u},u^\alpha_\mathrm{K})\mathrm{d}^p\mathrm{x}$ be a variational problem, which is invariant under the action of a Lie group symmetry $G$ given by
$$\begin{array}{ccccc}
\mathbf{x} & \mapsto  & \kern-8pt g\cdot \mathbf{x} & = & \kern-67pt \widetilde{\mathbf{x}}(\mathbf{x},\mathbf{u}),\\\nonumber
\mathbf{u} & \mapsto & \kern-8pt g\cdot\mathbf{u} & = &\kern-67pt \widetilde{\mathbf{u}}(\mathbf{x},\mathbf{u}),\\
u^\alpha_\mathrm{K} & \mapsto & g\cdot u^\alpha_\mathrm{K} & = & \widetilde{u^\alpha_\mathrm{K}}:=\dfrac{\partial^{|\mathrm{K}|}\widetilde{u^\alpha}}{\partial \widetilde{x_{k_1}}...\partial \widetilde{x_{k_m}}},
\end{array}$$
so that
$$L(\mathbf{x},\mathbf{u},u^\alpha_\mathrm{K})=L(\widetilde{\mathbf{x}},\widetilde{\mathbf{u}},\widetilde{u^\alpha_\mathrm{K}})\det\left(\dfrac{\mathrm{d}\widetilde{\mathbf{x}}}{\mathrm{d}\mathbf{x}}\right).$$
If
$$\sum_{k=1}^p (-1)^{k-1} C^j_k(\mathbf{x},\mathbf{u},u^\alpha_K,\Xi_j(\mathbf{x},\mathbf{u}),\Phi_j(\mathbf{x},\mathbf{u}))\mathrm{d}x_1...\widehat{\mathrm{d}x_k}...\mathrm{d}x_p, \quad\textrm{for }j=1,...,r,$$ are Noether's conservation laws, with $\Xi_j=(\xi^1_j,...,\xi^p_j)$ and $\Phi_j=(\phi^1_j,...,\phi^q_j)$ being the infinitesimals as defined in (\ref{infinitesimals}), then for all $g\in G$
\begin{align}\nonumber
&\sum_{k=1}^p (-1)^{k-1} C^j_k(\widetilde{\mathbf{x}},\widetilde{\mathbf{u}},\widetilde{u^\alpha_K},\Xi_j(\widetilde{\mathbf{x}},\widetilde{\mathbf{u}}),\Phi_j(\widetilde{\mathbf{x}},\widetilde{\mathbf{u}}))\mathrm{d}\widetilde{x_1}...\widehat{\mathrm{d}\widetilde{x_k}}...\mathrm{d}\widetilde{x_p}\\\nonumber
&=\sum_{k=1}^p (-1)^{k-1} C^j_k(\mathbf{x},\mathbf{u},u^\alpha_K,Ad(\Xi^T_j),Ad(\Phi^T_j))\mathrm{d}x_1...\widehat{\mathrm{d}x_k}...\mathrm{d}x_p.
\end{align}
\end{theorem}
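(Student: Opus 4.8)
The plan is to read the left-hand side as the pullback, under the point transformation $\varphi_g\colon\mathbf{z}\mapsto\widetilde{\mathbf{z}}=g\cdot\mathbf{z}$, of the Noether current $(p-1)$-form $\Omega_j=\sum_k(-1)^{k-1}C^j_k(\,\cdot\,,\Xi_j,\Phi_j)\,\mathrm{d}x_1\dots\widehat{\mathrm{d}x_k}\dots\mathrm{d}x_p$, the current being expressed in the coordinates that $\varphi_g$ maps into: substituting $\widetilde{\mathbf{x}},\widetilde{\mathbf{u}}$ and the $\widetilde{u^\alpha_\mathrm{K}}$ for the jet arguments and $\mathrm{d}\widetilde{x_i}$ for $\mathrm{d}x_i$ is exactly $\varphi_g^*\Omega_j$. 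Olver's current (Proposition 5.98 of \cite{Olver}) is assembled from the Lagrangian $p$-form and the symmetry vector field by operations that are covariant under prolonged point transformations — total differentiations, partial jet-differentiations, the (higher) Euler operators, and interior and exterior products — so $\varphi_g^*\Omega_j=\Omega\big(\varphi_g^*(L\,\mathrm{d}^p\mathbf{x}),\varphi_g^*\mathbf{v}_j\big)$, where $\Omega(\,\cdot\,,\,\cdot\,)$ denotes that construction. The invariance hypothesis is precisely $\varphi_g^*(L\,\mathrm{d}^p\mathbf{x})=L\,\mathrm{d}^p\mathbf{x}$, and the defining relation (\ref{definitionxiphis}) together with Theorem 3.3.10 of \cite{Mansfield} identifies the infinitesimals of $\varphi_g^*\mathbf{v}_j=(\varphi_{g^{-1}})_*\mathbf{v}_j$ with $Ad(\Xi^T_j)$ and $Ad(\Phi^T_j)$; reading off the coefficient functions of $\Omega\big(L\,\mathrm{d}^p\mathbf{x},\varphi_g^*\mathbf{v}_j\big)$ then yields exactly the displayed right-hand side.

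To make this concrete and in keeping with the explicit spirit of the paper, one can instead verify the identity at the level of the formula. Recall that Olver's formula writes each $C^j_k$ as $\xi^k_j L$ plus a part linear in the components of $\mathbf{Q}_j$ and their total derivatives, with coefficients built from $L$ by total differentiations and jet-partial derivatives. Into this one substitutes: (i) $L(\widetilde{\mathbf{z}})\det(\mathrm{d}\widetilde{\mathbf{x}}/\mathrm{d}\mathbf{x})=L(\mathbf{z})$, the invariance hypothesis; (ii) the chain-rule relation (\ref{transformedtotop}) between the total derivatives $D_i$ and $\widetilde{D_i}$, which governs how the total differentiations inside the formula behave once evaluated at $\widetilde{\mathbf{z}}$; (iii) the identities (\ref{actionQ}) and (\ref{AdQdefn}), which rewrite $\mathbf{Q}_j(\widetilde{\mathbf{x}},\widetilde{\mathbf{u}},\widetilde{\mathbf{u_x}})$ as $\big(\partial\widetilde{\mathbf{u}}/\partial\mathbf{u}-(\mathrm{d}\widetilde{\mathbf{u}}/\mathrm{d}\widetilde{\mathbf{x}})(\partial\widetilde{\mathbf{x}}/\partial\mathbf{u})\big)Ad(\mathbf{Q}_j)$ — this is where the Adjoint enters — together with the companion (\ref{definitionxiphis}) for $\Xi_j,\Phi_j$ themselves and the expression (\ref{partialderivative}) for $\partial\mathbf{u}/\partial\mathbf{x}$, which lets one trade the ``section'' Jacobian $\mathrm{d}\widetilde{\mathbf{x}}/\mathrm{d}\mathbf{x}$ against the partial Jacobians occurring in $Ad$; and (iv) Lemma \ref{lemmaaboutcoeff}, which expands $\mathrm{d}\widetilde{x_1}\dots\widehat{\mathrm{d}\widetilde{x_k}}\dots\mathrm{d}\widetilde{x_p}$ in the basis $\{\mathrm{d}x_1\dots\widehat{\mathrm{d}x_\ell}\dots\mathrm{d}x_p\}$ with coefficients built from $(\mathrm{d}\widetilde{\mathbf{x}}/\mathrm{d}\mathbf{x})^{-1}$ and its determinant. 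The determinant produced by (iv) cancels the one from (i); one then checks that the remaining factors reassemble, term by term, into $C^j_k$ with $\Xi_j,\Phi_j$ replaced by $Ad(\Xi^T_j),Ad(\Phi^T_j)$, the crucial point being that the $\xi^k_j L$ term and the $\mathbf{Q}_j$-part do not transform separately: the $\mathbf{Q}_j$-part supplies exactly the discrepancy between the naive combination $(\mathrm{d}\widetilde{\mathbf{x}}/\mathrm{d}\mathbf{x})^{-1}\Xi_j(\widetilde{\mathbf{z}})$ and the true $Ad(\Xi^T_j)$.

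The main obstacle is this last term-by-term reassembly when the Lagrangian has order greater than one: there the $\mathbf{Q}_j$-part involves iterated total derivatives of $\big(\partial\widetilde{\mathbf{u}}/\partial\mathbf{u}-(\mathrm{d}\widetilde{\mathbf{u}}/\mathrm{d}\widetilde{\mathbf{x}})(\partial\widetilde{\mathbf{x}}/\partial\mathbf{u})\big)Ad(\mathbf{Q}_j)$, and one must show that every contribution in which a derivative lands on the matrix prefactor rather than on $Ad(\mathbf{Q}_j)$ is precisely cancelled by the Jacobian-derivative terms produced when transforming the higher-Euler-operator coefficients of $L$ and when applying Lemma \ref{lemmaaboutcoeff}. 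Checking that all these corrections telescope is the delicate computation; if it becomes unwieldy, the coordinate-free naturality argument of the first paragraph can be invoked, the displayed identity being simply its expression in components.
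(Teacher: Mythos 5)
Your concrete computation in the second paragraph is, for a \emph{first-order} Lagrangian, essentially the paper's proof: Steps 1--4 of the paper carry out exactly the substitutions you list --- the induced action on $\partial/\partial u^\alpha_k$, the invariance condition on $L$, the identity (\ref{actionQ}) that introduces $Ad(\mathbf{Q}_j)$, and Lemma \ref{lemmaaboutcoeff} to re-expand the transformed $(p-1)$-forms, with the determinant from Lemma \ref{lemmaaboutcoeff} cancelling the one from the invariance of $L\,\mathrm{d}^p\mathbf{x}$. The genuine gap is the higher-order case, which you correctly identify as the obstacle and then do not resolve: the assertion that ``all these corrections telescope'' is left unverified, and the fallback naturality argument of your first paragraph is asserted rather than proved. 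The equivariance under point transformations mixing $\mathbf{x}$ and $\mathbf{u}$ of the \emph{specific} higher-Euler-operator boundary formula in Proposition 5.98 of \cite{Olver} is not an off-the-shelf fact --- the Noether current is only determined up to a closed $(p-1)$-form, and the integration-by-parts homotopy underlying that formula involves choices, so one must actually argue that this particular representative transforms as claimed.

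The paper sidesteps all of this with Lemma \ref{firstorderlagrangians}, which is the idea missing from your proposal: any $G$-invariant Lagrangian is equivalent to a first-order Lagrangian $\bar{L}$, obtained by introducing new dependent variables $v^\alpha_\mathrm{K}$ for the higher derivatives together with Lagrange multipliers, and one can induce an action of $G$ on the new variables (in particular a scaling action on the multipliers involving $\det(\mathrm{d}\widetilde{\mathbf{x}}/\mathrm{d}\mathbf{x})$) under which $\bar{L}\,\mathrm{d}^p\mathbf{x}$ is again invariant; eliminating the auxiliary variables from the conservation laws of $\bar{L}$ via the equations $\mathsf{E}^v(\bar{L})=0$ and $\mathsf{E}^\lambda(\bar{L})=0$ recovers those of $L$. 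With that reduction in hand, the only case that needs checking is the one your second paragraph already handles, and the delicate telescoping you worry about never arises.
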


To simplify the proof of Theorem \ref{adjointactioncons}, we shall need the following lemma.

\begin{lemma}\label{firstorderlagrangians}
It is sufficient to demonstrate Theorem \ref{adjointactioncons} for a first order Lagrangian with a Lie group symmetry. That is, any Lagrangian invariant under an action of a Lie group $G$ is equivalent to a first order Lagrangian that is also invariant under an extended action of $G$.
\end{lemma}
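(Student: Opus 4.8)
The plan is to reduce an arbitrary invariant Lagrangian of order $n$ to a first order one by the standard device of introducing the derivative coordinates as new dependent variables and imposing the jet relations via the prolonged action. Concretely, suppose $\bar{L}[\mathbf{u}]$ depends on derivatives $u^\alpha_{\mathrm{K}}$ up to order $n$. I would introduce new dependent variables $w^\alpha_{\mathrm{K}}$ for each multi-index $\mathrm{K}$ with $1 \le |\mathrm{K}| \le n-1$, set $w^\alpha = u^\alpha$ for the base case, and form the first order Lagrangian $\widehat{L}$ in the variables $(\mathbf{x}, u^\alpha, w^\alpha_{\mathrm{K}})$ obtained from $\bar{L}$ by replacing every occurrence of $u^\alpha_{\mathrm{K}i}$ (with $|\mathrm{K}|=n-1$) by $\partial w^\alpha_{\mathrm{K}}/\partial x_i$ and every $u^\alpha_{\mathrm{K}}$ with $|\mathrm{K}|\le n-1$ by $w^\alpha_{\mathrm{K}}$. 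On the submanifold defined by $w^\alpha_{\mathrm{K}i} = \partial w^\alpha_{\mathrm{K}}/\partial x_i$ (the prolongation constraints), this first order Lagrangian agrees with the original, so it has the same extremals and the same conservation laws restricted to prolonged sections. This is exactly the classical ``Ostrogradski-type'' first order reduction; the only new ingredient is to track the group action.

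The second step is to define the extended action of $G$ on the enlarged space $(\mathbf{x}, u^\alpha, w^\alpha_{\mathrm{K}})$. The natural choice is to let $G$ act on $w^\alpha_{\mathrm{K}}$ exactly as the prolonged action acts on $u^\alpha_{\mathrm{K}}$, i.e. $g \cdot w^\alpha_{\mathrm{K}} := \widetilde{u^\alpha_{\mathrm{K}}}$ where the right side is computed by the prolongation formula using $\widetilde{\mathbf x}$, $\widetilde{\mathbf u}$ and the $w$'s in place of the $u$-derivatives. One must then check two things: (i) this is a genuine (left or right) group action on the extended space, which follows because the prolongation of an action is an action and the defining formulas are ``universal'' in the jet coordinates; and (ii) the prolongation constraints $w^\alpha_{\mathrm{K}i}=\partial w^\alpha_{\mathrm{K}}/\partial x_i$ are preserved by this extended action, which is precisely the statement that prolongation commutes with the group action — a standard fact (see \cite{FelsOlver}). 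With (i) and (ii) in hand, the invariance of $\bar{L}$, namely $\bar L(\mathbf{x},\mathbf{u},u^\alpha_{\mathrm{K}}) = \bar L(\widetilde{\mathbf{x}},\widetilde{\mathbf{u}},\widetilde{u^\alpha_{\mathrm{K}}})\det(\mathrm{d}\widetilde{\mathbf{x}}/\mathrm{d}\mathbf{x})$, transfers verbatim to $\widehat{L}$ because $\widehat L$ is built from $\bar L$ by substitutions that are themselves equivariant under the extended action.

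The third step is to confirm that proving Theorem \ref{adjointactioncons} for $\widehat L$ really does yield it for $\bar L$. Here I would note that Noether's conservation law associated to a symmetry vector field $\mathbf{v}_j$ of $\widehat L$, when pulled back to a prolonged section $w^\alpha_{\mathrm{K}} = u^\alpha_{\mathrm{K}}$, coincides with the Noether conservation law of $\bar L$ for the corresponding prolonged vector field; and the infinitesimals $\Xi_j,\Phi_j$ of the extended action restrict on the constraint submanifold to the prolonged infinitesimals, whose $\mathbf{x}$- and $u$-components are just the original $\xi^i_j,\phi^\alpha_j$. Since the statement of Theorem \ref{adjointactioncons} only involves the transformation behaviour of the coefficients $C^j_k$ as functions of $(\mathbf{x},\mathbf{u},u^\alpha_{\mathrm{K}},\Xi_j,\Phi_j)$ and of the forms $\mathrm{d}\widetilde{x_1}\ldots\widehat{\mathrm{d}\widetilde{x_k}}\ldots\mathrm{d}\widetilde{x_p}$, and the matrix $Ad(g)$ depends only on group parameters, establishing the identity for the first order system $\widehat L$ immediately specializes (by restriction to the prolongation constraint) to the identity for $\bar L$.

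The main obstacle I anticipate is not any single computation but the bookkeeping needed to make precise the claim that the extended action is well-defined and that the jet constraints are invariant — in particular ensuring consistency when a multi-index $\mathrm{K}i$ of order $n$ can be obtained in several ways from lower-order $w$'s (the symmetry of mixed partials), so that $\widehat L$ is genuinely a function on the enlarged space and not merely on the constraint locus. One can handle this either by choosing a canonical ordering of multi-indices and only introducing $w^\alpha_{\mathrm{K}}$ for ordered $\mathrm{K}$, or by adding the commutation constraints $\partial w^\alpha_{\mathrm{K}}/\partial x_i = \partial w^\alpha_{\mathrm{K}'}/\partial x_j$ whenever $\mathrm{K}i = \mathrm{K}'j$ as unordered tuples and checking these are also $G$-invariant (again by equivariance of prolongation). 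Once this is set up cleanly, the rest is routine, since reduction of order for variational problems and the equivariance of prolongation are both classical.
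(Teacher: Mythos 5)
Your overall strategy -- reduce to first order by introducing the derivative coordinates as new dependent variables and extend the group action to them via the prolongation formulae -- is the same as the paper's, but you are missing the one ingredient that makes the reduction work: the Lagrange multipliers. Forming $\widehat L(\mathbf{x},u^\alpha,w^\alpha_{\mathrm{K}},\partial w^\alpha_{\mathrm{K}}/\partial x_i)$ and then ``restricting to the constraint submanifold'' is not a variational equivalence. The Euler--Lagrange equations of $\widehat L$, obtained by varying $u^\alpha$ and the $w^\alpha_{\mathrm{K}}$ independently, have nothing to do with those of $\bar L$, and imposing $w^\alpha_{\mathrm{K}i}=\partial w^\alpha_{\mathrm{K}}/\partial x_i$ afterwards does not repair this: a constrained variational problem requires either multipliers or a restriction of the admissible variations, which changes the equations. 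The same defect infects your third step: the Noether current of a first order Lagrangian involves only $\partial\widehat L/\partial w^\alpha_{\mathrm{K},i}$, whereas the higher order conservation law for $\bar L$ contains the momenta $\sum_{\mathrm{J}}(-1)^{|\mathrm{J}|}D_{\mathrm{J}}\bigl(\partial \bar L/\partial u^\alpha_{\mathrm{K}\mathrm{J}i}\bigr)$; these only emerge after one eliminates the multipliers using \emph{their} Euler--Lagrange equations, not by pullback to the constraint locus. The paper's proof defines
$$\bar{L}=L(\mathbf{x},\mathbf{u},v^\alpha_\mathrm{K},(v^\alpha_\mathrm{J})_\ell)-\sum_{\alpha,\ell} \lambda^\alpha_\ell(u^\alpha_\ell-v^\alpha_\ell)- \sum_{\alpha,\ell,|\mathrm{K}|>0}\lambda^\alpha_{\mathrm{K}\ell} ((v^\alpha_\mathrm{K})_\ell-v^\alpha_{\mathrm{K}\ell}),$$
so that the multiplier equations enforce the jet constraints and elimination of the $v$'s and $\lambda$'s recovers both the Euler--Lagrange system and the conservation laws of $L$.

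There is a second, related gap: you assert that invariance ``transfers verbatim'' to $\widehat L$, but off the constraint locus this is not automatic. The prolongation formula for $\widetilde{u^\alpha_{\mathrm{K}i}}$, with $w$'s substituted for the jet coordinates, does not coincide with $\widetilde{D_i}$ applied to the substituted formula for $\widetilde{u^\alpha_{\mathrm{K}}}$ unless the constraints hold, since the latter produces $\partial w^\alpha_{\mathrm{J}}/\partial x_i$ for all lower orders $\mathrm{J}$ as well. The paper sidesteps this by declaring the action on the $v^\alpha_{\mathrm{K}}$ to be the symbolic prolonged action (hence a genuine action), assigning each multiplier the transformation law
$$g\cdot\lambda^\alpha_\ell= \left(\left(\dfrac{g\cdot u^\alpha_\ell-g\cdot v^\alpha_\ell}{u^\alpha_\ell-v^\alpha_\ell}\right)\det \left(\dfrac{\mathrm{d}(g\cdot \mathbf{x})}{\mathrm{d}\mathbf{x}}\right)\right)^{-1}\lambda^\alpha_\ell,$$
so that each multiplier term of $\bar L\,\mathrm{d}^p\mathbf{x}$ is invariant \emph{by construction}, and then verifying via the chain rule and multiplicativity of the determinant that this really is a group action. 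You need to incorporate both of these devices for the lemma to hold.
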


\begin{proof}
Any Lagrangian can be written as a first order Lagrangian by introducing Lagrangian multipliers and a new dependent variable, $v^\alpha_\mathrm{K}$ for every derivative of $u^\alpha$ appearing as an argument of $L$. Specifically, define
$$\bar{L}=L(\mathbf{x},\mathbf{u},v^\alpha_\mathrm{K},(v^\alpha_\mathrm{J})_\ell)-\sum_{\alpha,\ell} \lambda^\alpha_\ell(u^\alpha_\ell-v^\alpha_\ell)- \sum_{\alpha,\ell,|\mathrm{K}|>0}\lambda^\alpha_{\mathrm{K}\ell} ((v^\alpha_\mathrm{K})_\ell-v^\alpha_{\mathrm{K}\ell}),$$
where $\mathrm{K}=(k_1,...,k_\mathrm{N})$ is an ordered multi-index of differentiation which is at most equal to $\mathrm{J}=(j_1,...,j_\mathrm{n})$. The Euler-Lagrange equations for $\bar{L}$ are
\begin{align}\nonumber
&\mathsf{E}^u(\bar{L})=\left\{\dfrac{\partial L}{\partial u^\alpha}+\sum_{i=1}^pD_i(\lambda^\alpha_i)\;\Big|\;\alpha \right\},\\[10pt]\nonumber
&\mathsf{E}^v(\bar{L})=\left\{\dfrac{\partial L}{\partial v^\alpha_\mathrm{K}}+\lambda^\alpha_\mathrm{K}+\sum_{\ell\ge k_\mathrm{N}}D_\ell(\lambda^\alpha_{\mathrm{K}\ell}) \;\Big|\;\alpha,\mathrm{K}\right\}\cup\left\{\dfrac{\partial L}{\partial v^\alpha_\mathrm{J}}-\sum_{\ell\ge j_n}D_\ell\left(\dfrac{\partial L}{\partial (v^\alpha_\mathrm{J})_\ell}\right)+\lambda^\alpha_\mathrm{J}\;\Big|\;\alpha,\mathrm{J}\right\},\\[10pt]\nonumber
&\mathsf{E}^\lambda(\bar{L})=\left\{u^\alpha_\ell-v^\alpha_\ell\;|\;\alpha,\ell\right\}\cup\left\{(v^\alpha_\mathrm{K})_\ell-v^\alpha_{\mathrm{K}\ell} \;|\;\alpha,\mathrm{K},\ell\right\}.
\end{align}
Eliminating the $v$'s and the $\lambda$'s yields the Euler-Lagrange system for $L$. We now induce an action on the additional dependent variables as follows. Set
$$\begin{array}{rcl}
g\cdot v^\alpha_\mathrm{K}&=&(g\cdot u^\alpha_\mathrm{K})|_{\{u^\alpha_\mathrm{M}=v^\alpha_\mathrm{M}\;|\;|\mathrm{M}|>0\}},\\[10pt]
g\cdot\lambda^\alpha_\ell&=& \left(\left(\dfrac{g\cdot u^\alpha_\ell-g\cdot v^\alpha_\ell}{u^\alpha_\ell-v^\alpha_\ell}\right)\det \left(\dfrac{\mathrm{d}(g\cdot \mathbf{x})}{\mathrm{d}\mathbf{x}}\right)\right)^{-1}\lambda^\alpha_\ell,\\[10pt]
g\cdot\lambda^\alpha_{\mathrm{K}\ell}&=& \left(\left(\dfrac{g\cdot (v^\alpha_\mathrm{K})_\ell-g\cdot v^\alpha_{\mathrm{K}\ell}}{(v^\alpha_\mathrm{K})_\ell-v^\alpha_{\mathrm{K}\ell}}\right)\det \left(\dfrac{\mathrm{d}(g\cdot \mathbf{x})}{\mathrm{d}\mathbf{x}}\right)\right)^{-1}\lambda^\alpha_{\mathrm{K}\ell},
\end{array}$$
and thus, by construction $\bar{L}\mathrm{d}^p\mathrm{x}$ is invariant. This is indeed a group action: the action on the $v^\alpha_\mathrm{K}$ is symbolically that of the action on the derivatives, $u^\alpha_\mathrm{K}$, which is a right action. Further,
$$\begin{array}{rcl}
h\cdot(g\cdot \lambda^\alpha_\ell) &= &h\cdot\left(\left( \dfrac{g\cdot u^\alpha_\ell-g\cdot v^\alpha_\ell}{u^\alpha_\ell-v^\alpha_\ell}\right)\det\left(\dfrac{\mathrm{d}(g\cdot\mathbf{x})}{\mathrm{d}\mathbf{x}}\right)\right)^{-1}\lambda^\alpha_\ell\\[10pt]
&=&\left(\left( \dfrac{gh\cdot u^\alpha_\ell-gh\cdot v^\alpha_\ell}{h\cdot u^\alpha_\ell-h\cdot v^\alpha_\ell}\right)\det\left(\dfrac{\mathrm{d}(gh\cdot\mathbf{x})}{\mathrm{d}(h\cdot\mathbf{x})}\right)\right)^{-1}h\cdot\lambda^\alpha_\ell\\[10pt]
&=&\left(\left( \dfrac{gh\cdot u^\alpha_\ell-gh\cdot v^\alpha_\ell}{u^\alpha_\ell-v^\alpha_\ell}\right)\det\left(\dfrac{\mathrm{d}(gh\cdot\mathbf{x})}{\mathrm{d}\mathbf{x}}\right)\right)^{-1}\lambda^\alpha_\ell\\[10pt]
&=&gh\cdot \lambda^\alpha_\ell
\end{array}$$
by the chain rule and using the fact that the determinant is multiplicative.

The argument for $\lambda^\alpha_\mathrm{K}$ is similar. Finally, we note that obtaining Noether's conservation laws for $\bar{L}$ and eliminating the $v^\alpha_\mathrm{K}$ and $\lambda^\alpha_\mathrm{K}$ using the Euler-Lagrange equations $\mathsf{E}^v(\bar{L})$ and $\mathsf{E}^\lambda(\bar{L})$, yields the conservation laws for $L$. $\hfill \Box$
\end{proof}

\begin{proof} \textbf{of Theorem \ref{adjointactioncons}} By Lemma \ref{firstorderlagrangians}, it is enough to prove the result for a first order Lagrangian. A first order Lagrangian with a Lie symmetry has Noether's conservation laws in the form
$$\sum_{k=1}^p \dfrac{\mathrm{d}}{\mathrm{d}x_k}C^j_k=0,\quad \textrm{for } j=1,...,r,$$
where 
$$C^j_k=L(\mathbf{x},\mathbf{u},u^\alpha_K)\xi^k_j(\mathbf{x},\mathbf{u})+\sum_{\alpha=1}^q Q^\alpha_j(\mathbf{x},\mathbf{u},\mathbf{u_x})\dfrac{\partial L}{\partial u^\alpha_k}$$ 
and $Q^\alpha_j$ is as defined in Definition \ref{character}.

\noindent\textbf{Step 1} Now considering the operator used for the $k^{\mathrm{th}}$ component of the conservation law
$$\sum_{\alpha=1}^q Q^\alpha_j(\mathbf{x},\mathbf{u},\mathbf{u_x})\dfrac{\partial}{\partial u^\alpha_k}$$
where $k$ is fixed, we will show that the action of $g\in G$ on the operator is equal to
$$\sum_{\alpha=1}^q Q^\alpha_j(\widetilde{\mathbf{x}},\widetilde{\mathbf{u}},\widetilde{\mathbf{u_x}})\dfrac{\partial}{\partial \widetilde{u^\alpha_k}}=\sum_{\alpha,\ell}Ad(Q^\alpha_j)\left(\dfrac{\mathrm{d}\widetilde{\mathbf{x}}}{\mathrm{d}\mathbf{x}}\right)_{k\ell}\dfrac{\partial}{\partial u^\alpha_\ell}.$$
Since we know what the action of $g\in G$ is on $\mathbf{Q}_j$ (see (\ref{actionQ})), we just need to find how $g\in G$ acts on $\partial/\partial u^\alpha_k$. Schematically, we have that
$$\boldsymbol{\nabla}_{\mathbf{\widetilde{u_x}}}=\left(\dfrac{\mathrm{d}\widetilde{\mathbf{u_x}}}{\mathrm{d}\mathbf{u_x}}\right)^{-T}\boldsymbol{\nabla}_{\mathbf{u_x}},$$
and to obtain the components of this Jacobian matrix, we consider Equation (\ref{partialderivative}) and calculate
$$\lim_{\varepsilon\rightarrow 0}\left.\dfrac{\partial \mathbf{u}}{\partial \mathbf{x}}\right|_{\displaystyle{\frac{\mathrm{d}\widetilde{\mathbf{u}}}{\mathrm{d}\widetilde{\mathbf{x}}}+\varepsilon H}}=\left(\dfrac{\partial\widetilde{\mathbf{u}}}{\partial\mathbf{u}}-\dfrac{\mathrm{d}\widetilde{\mathbf{u}}}{\mathrm{d}\widetilde{\mathbf{x}}}\dfrac{\partial \widetilde{\mathbf{x}}}{\partial \mathbf{u}}\right)^{-1}H\dfrac{\mathrm{d}\widetilde{\mathbf{x}}}{\mathrm{d}\mathbf{x}}=A^{-1}HB=V(H),$$
where this defines $A$, $B$ and $V(H)$. By construction, the coefficient of $H_{\alpha k}$ in the $(\beta,\ell)$ component of this matrix equals
$$\dfrac{\partial u^\beta_\ell}{\partial \widetilde{u^\alpha_k}}.$$
Direct calculation shows that if $\mathbf{e_{ij}}$ is the matrix with $(\mathbf{e_{ij}})_{k\ell}=\delta_{ik}\delta_{j\ell}$, then
$$V(\mathbf{e_{ij}})=\left(\begin{array}{c}
(A^{-1})_{1i}\\
(A^{-1})_{2i}\\
\vdots\\
(A^{-1})_{qi}\end{array}\right)\left(\begin{array}{cccc} 
B_{j1} & B_{j2} & \cdots & B_{jp}\end{array}\right),$$
and thus
$$\dfrac{\partial u^\beta_\ell}{\partial \widetilde{u^\alpha_k}}=\left(\left(\dfrac{\partial\widetilde{\mathbf{u}}}{\partial\mathbf{u}}-\dfrac{\mathrm{d}\widetilde{\mathbf{u}}}{\mathrm{d}\widetilde{\mathbf{x}}}\dfrac{\partial \widetilde{\mathbf{x}}}{\partial \mathbf{u}}\right)^{-1}\right)_{\beta\alpha}\left(\dfrac{\mathrm{d}\widetilde{\mathbf{x}}}{\mathrm{d}\mathbf{x}}\right)_{k\ell}.$$
We have then, for $k$ fixed,
\begin{align}\nonumber
& \sum_{\alpha=1}^q Q^\alpha_j(\widetilde{\mathbf{x}},\widetilde{\mathbf{u}},\widetilde{\mathbf{u_x}})\dfrac{\partial}{\partial \widetilde{u^\alpha_k}}\\\nonumber
& \quad = \sum_{\beta,\ell,n,\alpha} A_{\alpha n}Ad(Q^n_j) (A^{-1})_{\beta\alpha}B_{k\ell}\dfrac{\partial}{\partial u^\beta_\ell}\\\nonumber
& \quad = \sum_{\beta,\ell} Ad(Q^\beta_j)\left(\dfrac{\mathrm{d}\widetilde{\mathbf{x}}}{\mathrm{d}\mathbf{x}}\right)_{k\ell}\dfrac{\partial}{\partial u^\beta_\ell},
\end{align}
using (\ref{actionQ}), and noting that the matrix appearing as a factor of $Q(\widetilde{\mathbf{x}},\widetilde{\mathbf{u}},\widetilde{\mathbf{u_x}})$ is $A$.

\noindent\textbf{Step 2} Now we evaluate $\sum_\alpha Q^\alpha_j(\widetilde{\mathbf{x}},\widetilde{\mathbf{u}},\widetilde{\mathbf{u_x}})\partial / \partial \widetilde{u^\alpha_k}$ on
\begin{equation}\label{invariancecond}
L(\widetilde{\mathbf{x}},\widetilde{\mathbf{u}},\widetilde{\mathbf{u_x}})=L(\mathbf{x},\mathbf{u},\mathbf{u_x})\det\left(\dfrac{\mathrm{d}\widetilde{\mathbf{x}}}{\mathrm{d}\mathbf{x}}\right)^{-1},
\end{equation}
which is the invariance condition on the Lagrangian. From
$$\dfrac{\mathrm{d}\widetilde{\mathbf{x}}}{\mathrm{d}\mathbf{x}}=\dfrac{\partial \widetilde{\mathbf{x}}}{\partial \mathbf{x}}+\dfrac{\partial \widetilde{\mathbf{x}}}{\partial \mathbf{u}}\dfrac{\partial \mathbf{u}}{\partial \mathbf{x}}
$$
it can be shown that
$$\begin{array}{rl}
\dfrac{\partial}{\partial u^\beta_\ell}\det\left(\dfrac{\mathrm{d}\widetilde{\mathbf{x}}}{\mathrm{d}\mathbf{x}}\right) \kern-6pt&=\displaystyle{\sum_{j=1}^p\dfrac{\partial \widetilde{x_j}}{\partial u^\beta}\left((j,\ell)\textrm{ first minor of } \dfrac{\mathrm{d}\widetilde{\mathbf{x}}} {\mathrm{d}\mathbf{x}}\cdot (-1)^{j+\ell}\right)}\\[10pt]
& = \displaystyle{\sum_{j=1}^p}\dfrac{\partial \widetilde{x_j}}{\partial u^\beta}\left(\left(\dfrac{\mathrm{d}\widetilde{\mathbf{x}}} {\mathrm{d}\mathbf{x}}\right)^{-1}\right)_{\ell j}\det\left(\dfrac{\mathrm{d}\widetilde{\mathbf{x}}} {\mathrm{d}\mathbf{x}}\right).
\end{array}$$
Thus, we obtain, recalling $k$ is fixed, that
\begin{align}\nonumber
& \sum_{\alpha=1}^q Q^\alpha_j(\widetilde{\mathbf{x}},\widetilde{\mathbf{u}},\widetilde{\mathbf{u_x}})\dfrac{\partial}{\partial \widetilde{u^\alpha_k}}\left(L(\widetilde{\mathbf{x}},\widetilde{\mathbf{u}},\widetilde{\mathbf{u_x}})\right)\\\label{operatoronL}
&\quad = \det\left(\dfrac{\mathrm{d}\widetilde{\mathbf{x}}} {\mathrm{d}\mathbf{x}}\right)^{-1}\left(\sum_{\beta,\ell} Ad(Q^\beta_j)\left(\dfrac{\mathrm{d}\widetilde{\mathbf{x}}} {\mathrm{d}\mathbf{x}}\right)_{k\ell}\dfrac{\partial}{\partial u^\beta_\ell}L(\mathbf{x},\mathbf{u},\mathbf{u_x})-\sum_{\beta}Ad(Q^\beta_j)\dfrac{\partial \widetilde{x_k}}{\partial u^\beta}L(\mathbf{x},\mathbf{u},\mathbf{u_x})\right).
\end{align}
\noindent\textbf{Step 3} We are now in a position to consider the $k^{\mathrm{th}}$ component of the conservation law in the transformed variables, namely,
$$g\cdot C^j_k=L(\widetilde{\mathbf{x}},\widetilde{\mathbf{u}},\widetilde{\mathbf{u_x}})\xi^k_j(\widetilde{\mathbf{x}},\widetilde{\mathbf{u}})+\sum_\alpha Q^\alpha_j(\widetilde{\mathbf{x}},\widetilde{\mathbf{u}},\widetilde{\mathbf{u_x}})\dfrac{\partial}{\partial \widetilde{u^\alpha_k}}L(\widetilde{\mathbf{x}},\widetilde{\mathbf{u}},\widetilde{\mathbf{u_x}}).
$$
Using Equations (\ref{definitionxiphis}), (\ref{invariancecond}) and (\ref{operatoronL}), and collecting terms, yields
\begin{equation}\label{penultimatestep}
g\cdot C^j_k=\det\left(\dfrac{\mathrm{d}\widetilde{\mathbf{x}}} {\mathrm{d}\mathbf{x}}\right)^{-1}\left(\dfrac{\mathrm{d}\widetilde{\mathbf{x}}} {\mathrm{d}\mathbf{x}}\right)_{k\ell}\left(L(\mathbf{x},\mathbf{u},\mathbf{u_x})Ad(\xi^k_j)+\sum_\alpha Ad(Q^\alpha_j)\dfrac{\partial}{\partial u^\alpha_\ell}L(\mathbf{x},\mathbf{u},\mathbf{u_x})\right).
\end{equation}
\noindent\textbf{Step 4} We now consider
$$g\cdot \left(\sum_{k=1}^p(-1)^{k-1}C^j_k\mathrm{d}x_1...\,\widehat{\mathrm{d}x_k}...\,\mathrm{d}x_p\right)=\sum_{k=1}^p(-1)^{k-1}(g\cdot C^j_k)\mathrm{d}\widetilde{x_1}...\,\widehat{\mathrm{d}\widetilde{x_k}}...\,\mathrm{d}\widetilde{x_p},$$
Combining Equation (\ref{penultimatestep}) and Lemma \ref{lemmaaboutcoeff} yields
\begin{align}\nonumber
&g\cdot\left(\sum_{k=1}^p (-1)^{k-1} C^j_k(\mathbf{x},\mathbf{u},\mathbf{u_x},\Xi_j,\Phi_j)\mathrm{d}x_1...\widehat{\mathrm{d}x_k}...\mathrm{d}x_p\right)\\
&\quad=\sum_{k=1}^p (-1)^{k-1} C^j_k(\mathbf{x},\mathbf{u},\mathbf{u_x},Ad(\Xi_j),Ad(\Phi_j))\mathrm{d}x_1...\widehat{\mathrm{d}x_k}...\mathrm{d}x_p,
\end{align}
which completes the proof. $\hfill \Box$
\end{proof}

{Since we can write the Adjoint action on the generating vector fields in matrix form (see (\ref{definitionxiphis})) and the conservation laws are linear in $\xi$ and $\phi$, the action of $g\in G$ on the conservation laws can be written as
\begin{equation}
\mathcal{A}d(g)\left(\begin{array}{c}
\displaystyle{\sum_{k=1}^p (-1)^{k-1}C^1_k }\\
\vdots\\
\displaystyle{\sum_{k=1}^p (-1)^{k-1}C^r_k}\end{array}\right),
\end{equation}
where $\mathcal{A}d(g)$ is the Adjoint representation of $G$ which can be easily computed as shown in the following example. 
\begin{example}\label{SL2adjointrep}
Consider the infinitesimal vector fields 
$$x\partial_x-y\partial_y,\quad y\partial_x\quad and\quad x\partial_y,$$
which generate the linear $SL(2)$ action. The adjoint action of $g\in SL(2)$ on these infinitesimal vector fields is as follows
\begin{align}\nonumber
& g\cdot \left(\alpha(x\partial_x-y\partial_y)+\beta y\partial_x+\gamma x\partial_y\right)\\\nonumber
& \quad=\alpha(\widetilde{x}\partial_{\widetilde{x}}-\widetilde{y}\partial_{\widetilde{y}})+\beta \widetilde{y}\partial_{\widetilde{x}}+\gamma \widetilde{x}\partial_{\widetilde{y}}\\\label{Adjointrepresentation}
&\quad =\big(\begin{array}{ccc}\alpha & \beta & \gamma \end{array}\big)\underbrace{\left(\begin{array}{ccc}
ad+bc & 2bd & -2ac\\
cd & d^2 & -c^2\\
-ab & -b^2 & a^2\end{array}\right)}_{\mathlarger{\mathcal{A}d(g)}}\left(\begin{array}{c}
x\partial_x-y\partial_y\\
y\partial_x\\
x\partial_y\end{array}\right),
\end{align}
where $ad-bc=1$.
\end{example}
For more details on the adjoint representation of $G$ with respect to the generating vector fields, see Gon\c calves and Mansfield \cite{Mansfield,GoncalvesMansfield}. }

\subsection{Noether's Laws in terms of the invariants and the Adjoint action of a moving frame}
The following result states the structure of Noether's conservation laws for the general case, where the independent variables are not necessarily invariant under the Lie group action.

\begin{theorem}\label{conslawsnoninvindvar}
Let $\int L(\kappa_1,\kappa_2,...)I(\mathrm{d}^p\mathbf{x})$ be invariant under $G \times M\rightarrow M$, where $M=J^n(X\times U)$, with generating invariants $\kappa_j$, for $j=1,...,N$. Introduce a dummy invariant variable $\tau$ to effect the variation and then integration by parts yields
$$\begin{array}{l}
\mathcal{D}_\tau \int L(\kappa_1,\kappa_2,...) I(\mathrm{d}^p\mathbf{x})\\
\displaystyle{\;=\int \left[\sum_\alpha\mathsf{E}^\alpha(L)I^\alpha_\tau I(\mathrm{d}^p\mathbf{x})+\sum_{k=1}^p \mathrm{d}\left((-1)^{k-1}\left(\sum_{\mathrm{J},\alpha} I^\alpha_{\mathrm{J}\tau}C^\alpha_{\mathrm{J},k}\right) I(\mathrm{d}x_1)...\widehat{I(\mathrm{d}x_k)}...I(\mathrm{d}x_p)\right)\right],}
\end{array}$$
where this defines the vectors $\mathcal{C}^\alpha_k=(C^\alpha_{\mathrm{J},k})$. Recall that $\mathsf{E}^\alpha(L)$ are the invariantized Euler-Lagrange equations and $I^\alpha_{\mathrm{J}\tau}=I(u^\alpha_{\mathrm{J}\tau})$, where $\mathrm{J}$ is a multi-index of differentiation with respect to the variables $x_i$, for $i=1,...,p$. Let $(a_1,...,a_r)$ be the coordinates of $G$ near the identity $e$, and $\mathbf{v}_i$, for $i=1,...,r$, the associated infinitesimal vector fields. Furthermore, let $\mathcal{A}d(g)$ be the Adjoint representation of $G$ with respect to these vector fields. For each dependent variable, define the matrices of characteristics to be
$$\mathscr{Q}^\alpha(\widetilde{\zede})=(\widetilde{D_\mathrm{K}(Q^\alpha_i)} ),\qquad \alpha=1,...,q,$$
where $\mathrm{K}$ is a multi-index of differentiation with respect to the $x_k$ and 
$$Q^\alpha_i=\phi^\alpha_i-\sum_{k=1}^p \xi^k_i u^\alpha_k=\left.\frac{\partial \widetilde{u^\alpha}}{\partial a_i}\right|_{g=e}-\sum_{k=1}^p \left.\frac{\partial \widetilde{x_k}}{\partial a_i}\right|_{g=e}u^\alpha_k$$
are the components of the $q$-tuple $\mathbf{Q}_i$ known as the characteristic of the vector field $\mathbf{v}_i$. Let $\mathscr{Q}^\alpha(J,I)$, for $\alpha=1,...,q$, be the invariantization of the above matrices. Then, the $r$ conservation laws obtained via Noether's Theorem can be written in the form
$$\mathrm{d} \left(\mathcal{A}d(\rho)^{-1}\left(\boldsymbol{\upsilon}_1,\cdots,\boldsymbol{\upsilon}_p\right)\mathsf{M}_{\mathcal{J}}\,\mathrm{d}^{p-1}\widehat{\mathbf{x}}\,\right)=0,$$
where 
\begin{equation}\label{vectorofinvs}
\boldsymbol{\upsilon}_k=\sum_\alpha (-1)^{k-1}\left(\mathscr{Q}^\alpha(J,I)\mathcal{C}^\alpha_k+L(\Xi(J,I))_{k}\right),
\end{equation}
are the vectors of invariants, with $(\Xi(J,I))_k$ the $k^\textrm{th}$ column of $\Xi(J,I)$, $\mathsf{M}_{\mathcal{J}}$ is the matrix of first minors of the Jacobian matrix evaluated at the frame, $\mathcal{J}=\mathrm{d}\widetilde{\mathbf{x}}/\mathrm{d}\mathbf{x}|_{g=\rho(\zede)}$, and 
$$\mathrm{d}^{p-1}\widehat{\mathbf{x}}=\left(\begin{array}{c}
\widehat{\mathrm{d}x_1}\mathrm{d}x_2...\mathrm{d}x_p\\
\mathrm{d}x_1\widehat{\mathrm{d}x_2}\mathrm{d}x_3...\mathrm{d}x_p\\
\vdots\\
\mathrm{d}x_1...\mathrm{d}x_{p-1}\widehat{\mathrm{d}x_p}
\end{array}\right)=\left(\begin{array}{c}\mathrm{d}x_2\mathrm{d}x_3...\mathrm{d}x_p\\
\mathrm{d}x_1\mathrm{d}x_3...\mathrm{d}x_p\\
\vdots\\
\mathrm{d}x_1\mathrm{d}x_2...\mathrm{d}x_{p-1}\end{array}\right).$$
\end{theorem}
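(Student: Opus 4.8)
The strategy is to establish, as an off-shell identity, that the $r$ Noether currents
$$\omega_j:=\sum_{k=1}^p(-1)^{k-1}C^j_k\,\mathrm{d}x_1\dots\widehat{\mathrm{d}x_k}\dots\mathrm{d}x_p,\qquad j=1,\dots,r,$$
--- where $C^j_k$ is the $k^{\textrm{th}}$ component of the current attached by Noether's Theorem to the generator $\mathbf{v}_j$, so that $\mathrm{d}\omega_j=0$ on solutions --- can be rewritten as $\omega_j=\big[\mathcal{A}d(\rho)^{-1}(\boldsymbol{\upsilon}_1,\dots,\boldsymbol{\upsilon}_p)\mathsf{M}_{\mathcal{J}}\,\mathrm{d}^{p-1}\widehat{\mathbf{x}}\big]_j$; applying $\mathrm{d}$ then yields the theorem. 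For a first order Lagrangian $C^j_k=L\,\xi^k_j+\sum_\alpha Q^\alpha_j\,\partial L/\partial u^\alpha_k$ (the general case reducing to this by Lemma \ref{firstorderlagrangians}), and the only structural facts used below are that $C^j_k$ is linear in the infinitesimal data $(\Xi_j,\Phi_j)$ and that $\iota\big(\sum_\alpha Q^\alpha_j\,\partial L/\partial u^\alpha_k\big)$ is the boundary term produced by the $\tau$-calculus of Section \ref{subsecINVARIANTCALCULUS} when the variation $u^\alpha_\tau$ is set equal to the characteristic $Q^\alpha_j$.

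The first step is to evaluate the equivariance identity of Theorem \ref{adjointactioncons} at the frame $g=\rho(\zede)$. On the left, replacing $\widetilde{\zede}\mapsto I(\zede)$, $\mathrm{d}\widetilde{x_k}\mapsto I(\mathrm{d}x_k)$ and $\Xi_j(\widetilde{\mathbf{x}},\widetilde{\mathbf{u}}),\Phi_j(\widetilde{\mathbf{x}},\widetilde{\mathbf{u}})\mapsto\Xi_j(J,I),\Phi_j(J,I)$ turns the transformed current into the invariantization $\iota(\omega_j)$. On the right, the matrix relation $\big(Ad(\Xi)\quad Ad(\Phi)\big)=\mathcal{A}d(g)\big(\Xi(\mathbf{x},\mathbf{u})\quad\Phi(\mathbf{x},\mathbf{u})\big)$, specialised to $g=\rho(\zede)$, replaces $(\Xi_j,\Phi_j)$ by $\sum_i\mathcal{A}d(\rho)_{ji}(\Xi_i,\Phi_i)$; since $C^j_k$ is, pointwise in $\zede$, a linear function of its infinitesimal arguments, the right-hand side becomes $\sum_i\mathcal{A}d(\rho)_{ji}\,\omega_i$. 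Hence $\iota(\boldsymbol{\omega})=\mathcal{A}d(\rho)\,\boldsymbol{\omega}$, equivalently $\boldsymbol{\omega}=\mathcal{A}d(\rho)^{-1}\iota(\boldsymbol{\omega})$, which is the origin of the factor $\mathcal{A}d(\rho)^{-1}$.

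The second step is to compute $\iota(\boldsymbol{\omega})$. Writing $\iota(C^j_k)=L\,\xi^k_j(J,I)+\iota\big(\sum_\alpha Q^\alpha_j\,\partial L/\partial u^\alpha_k\big)$, the last term is the boundary term (\ref{boundaryterms}) under the substitution $I^\alpha_{\mathrm{J}\tau}=\iota(u^\alpha_{\mathrm{J}\tau})\mapsto\iota(D_\mathrm{J}Q^\alpha_j)=\big(\mathscr{Q}^\alpha(J,I)\big)_{j\mathrm{J}}$, and so equals $\sum_\alpha\big(\mathscr{Q}^\alpha(J,I)\mathcal{C}^\alpha_k\big)_j$; together with the transport contribution and the signs $(-1)^{k-1}$ this reconstructs the vectors $\boldsymbol{\upsilon}_k$ of (\ref{vectorofinvs}), so that $\iota(\omega_j)=\sum_{k}(\boldsymbol{\upsilon}_k)_j\,I(\mathrm{d}x_1)\dots\widehat{I(\mathrm{d}x_k)}\dots I(\mathrm{d}x_p)$. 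Expanding $I(\mathrm{d}x_i)=\sum_m\mathcal{J}_{im}\mathrm{d}x_m$ (equivalently, Lemma \ref{lemmaaboutcoeff} evaluated at the frame) gives
\begin{equation}\label{invariantp_1forms}
I(\mathrm{d}x_1)\dots\widehat{I(\mathrm{d}x_k)}\dots I(\mathrm{d}x_p)=\sum_{\ell=1}^p(\mathsf{M}_{\mathcal{J}})_{k\ell}\,\mathrm{d}x_1\dots\widehat{\mathrm{d}x_\ell}\dots\mathrm{d}x_p,
\end{equation}
where $(\mathsf{M}_{\mathcal{J}})_{k\ell}$ is the $(k,\ell)$ first minor of $\mathcal{J}$; in particular $\mathsf{M}_{\mathcal{J}}\,\mathrm{d}^{p-1}\widehat{\mathbf{x}}$ is invariant, being equal to the vector of invariant $(p-1)$-forms $I(\mathrm{d}x_1)\dots\widehat{I(\mathrm{d}x_k)}\dots I(\mathrm{d}x_p)$. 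Substituting (\ref{invariantp_1forms}) yields $\iota(\boldsymbol{\omega})=(\boldsymbol{\upsilon}_1,\dots,\boldsymbol{\upsilon}_p)\mathsf{M}_{\mathcal{J}}\,\mathrm{d}^{p-1}\widehat{\mathbf{x}}$, and combining this with $\boldsymbol{\omega}=\mathcal{A}d(\rho)^{-1}\iota(\boldsymbol{\omega})$ and with $\mathrm{d}\omega_j=0$ completes the argument.

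The main obstacle I anticipate is the second step: verifying cleanly that the invariantization of the classical boundary current $\iota\big(\sum_\alpha Q^\alpha_j\,\partial L/\partial u^\alpha_k\big)$ is indeed the boundary term delivered by the $\tau$-variational calculus --- where $\mathcal{D}_i$ and $\mathcal{D}_\tau$ do not commute, so the corrections $\mathcal{A}^i_{p+1,k_\ell}$ of (\ref{commutators}) and the Lie derivative of the invariant volume form intervene --- and that the transport term then assembles with it to give exactly (\ref{vectorofinvs}), all the while keeping the signs $(-1)^{k-1}$ and the wedge orderings consistent across the two descriptions of the $(p-1)$-forms. The equivariance argument of the first step and the matrix manipulations of the final assembly are by comparison routine.
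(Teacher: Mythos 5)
Your overall architecture --- establish $\boldsymbol{\omega}=\mathcal{A}d(\rho)^{-1}\iota(\boldsymbol{\omega})$ by evaluating Theorem \ref{adjointactioncons} at the frame, then compute $\iota(\boldsymbol{\omega})$ --- is sound for the $L\,\Xi$ contribution and for the factor $\mathsf{M}_{\mathcal{J}}\,\mathrm{d}^{p-1}\widehat{\mathbf{x}}$; your Step 1 and your minor expansion (\ref{invariantp_1forms}) match what the paper does. The gap is in your Step 2, and it is exactly the point you flag as the ``main obstacle'': you assert that the invariantization of the classical current $\sum_\alpha Q^\alpha_j\,\partial L/\partial u^\alpha_k$ equals $\sum_\alpha\bigl(\mathscr{Q}^\alpha(J,I)\mathcal{C}^\alpha_k\bigr)_j$ on the nose, where the $\mathcal{C}^\alpha_k$ are the coefficients produced by the $\tau$-variational calculus. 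That exact identity is false in general: the paper's Example \ref{illustration} explicitly records that the vectors of invariants obtained by invariantizing the laws (your route) and those obtained from formula (\ref{vectorofinvs}) (the theorem's statement) are \emph{not} the same, differing by trivial conservation laws. As written, your argument therefore proves the theorem for the invariantized classical current rather than for the expression (\ref{vectorofinvs}); to close the gap you would have to either show the discrepancy is a trivial current or, as the paper does, show directly that the formula-(\ref{vectorofinvs}) expression is closed on solutions.

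The paper's proof does the latter and never invariantizes the classical current for the characteristic part. It takes the boundary terms (\ref{invboundaryterms}) delivered by the invariant calculus (which, together with the Euler--Lagrange term, are closed by construction), uses the chain rule (\ref{splitininv}) to write $I^\alpha_{\mathrm{J}\tau}$ as $(\widetilde{D_\tau}u^\alpha_{\mathrm{J}})|_{g=\rho(\zede)}$ times the Jacobian $(\partial\widetilde{u^\alpha_\mathrm{J}}/\partial u^\alpha_\mathrm{J})^T|_{g=\rho(\zede)}$, identifies $\widetilde{D_\tau}u^\alpha_{\mathrm{J}}$ with $D_\mathrm{J}Q^\alpha_i$ by treating $\tau$ as the group parameter $a_i$ (Equations (\ref{correspondence})--(\ref{correspondence1})), and then invokes the appendix result, Theorem \ref{equivalentto3.3.10}, in the form $\mathcal{A}d(\rho)^{-1}\mathscr{Q}^\alpha(J,I)=\mathscr{Q}^\alpha(\zede)(\partial\widetilde{u^\alpha_\mathrm{J}}/\partial u^\alpha_\mathrm{J})^T|_{g=\rho(\zede)}$ to produce the block $\mathcal{A}d(\rho)^{-1}\mathscr{Q}^\alpha(J,I)\mathcal{C}^\alpha_k$ in one stroke. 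That appendix theorem is the linchpin your proposal never invokes; without it (or an equivalent computation) the passage from the $\tau$-calculus boundary data to the invariantized characteristic matrices $\mathscr{Q}^\alpha(J,I)$ with the correct $\mathcal{A}d(\rho)^{-1}$ prefactor is not established.
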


\begin{proof}
The infinitesimal criterion of invariance tells us that $G$ is a variational symmetry group of $\int \bar{L}(\zede)\mathrm{d}^p\mathbf{x}$ if and only if 
$$\mathsf{pr}^{(n)}\mathbf{v}_i(\bar{L})+\bar{L}\mathsf{Div}\,\Xi_i=0,$$
for all $\zede\in M$ and every infinitesimal generator $\mathbf{v}_i$; the $n^{\mathrm{th}}$ prolongation of $\mathbf{v}_i$ is defined as $\mathsf{pr}^{(n)}\mathbf{v}_i=\sum_k\xi^k_i\partial_{x_k}+\sum_{\alpha,\mathrm{J}}\phi^\alpha_{\mathrm{J},i}\partial_{u^\alpha_\mathrm{J}}$. This criterion can also be written as
$$\mathsf{pr}^{(n)}\mathbf{v}_{\mathbf{Q}_i}(\bar{L})+\mathsf{Div}(\bar{L}\Xi_i)=0,$$
where $\mathsf{pr}^{(n)}\mathbf{v}_{\mathbf{Q}_i}=\sum_{\alpha,\mathrm{J}}D_\mathrm{J}Q^\alpha_i\partial_{u^\alpha_\mathrm{J}}$.
Calculating $\int \mathsf{pr}^{(n)}\mathbf{v}_{\mathbf{Q}_i}(\bar{L})\mathrm{d}^p\mathbf{x}$ yields 
$$\int \left(\mathbf{Q}_i\cdot \mathsf{E}(\bar{L})+\mathsf{Div}(\mathrm{A})\right) \mathrm{d}^p\mathbf{x},$$
which is exactly what $\mathrm{d}/\mathrm{d}\varepsilon|_{\varepsilon=0}\bar{\mathscr{L}}[u^\alpha+\varepsilon v^\alpha]$ produces, where $v^\alpha$ correspond to the infinitesimals. Since we know that
$$\left.\frac{\mathrm{d}}{\mathrm{d}\varepsilon}\right|_{\varepsilon=0}\bar{\mathscr{L}}[u^\alpha+\varepsilon v^\alpha]\quad \mathrm{and}\quad \left.\frac{\mathrm{d}}{\mathrm{d}\tau}\right |_{u^\alpha_\tau=v^\alpha}\,\bar{\mathscr{L}}[u^\alpha]$$
yield the same symbolic result,
$$\mathcal{D}_\tau |_{\widetilde{D_\tau} \widetilde{u^\alpha}|_{g=\rho(\zede)}=v^\alpha}\,\mathscr{L}[\boldsymbol{\kappa}]$$
provides us with the invariantized Euler-Lagrange system and the boundary terms
\begin{equation}\label{invboundaryterms}
\sum_{k=1}^p \mathrm{d}\left((-1)^{k-1}\left(\sum_{\mathrm{J},\alpha} I^\alpha_{\mathrm{J}\tau}C^\alpha_{\mathrm{J},k}\right)I(\mathrm{d}x_1)... \widehat{I(\mathrm{d}x_k)}...I(\mathrm{d}x_p)\right).
\end{equation}

By definition, $I^\alpha_{\mathrm{J}\tau}$ is equal to
$$I^\alpha_{\mathrm{J}\tau}=\widetilde{D_\tau}\,\widetilde{u^\alpha_{\mathrm{J}}}|_{g=\rho(\zede)}.$$ 
Hence by the chain rule,
\begin{equation}\label{splitininv}
(I^\alpha_\tau \; I^\alpha_{\mathrm{J}_1 \tau} \; I^\alpha_{\mathrm{J}_2\tau}\; \cdots)=(\widetilde{D_\tau}u^\alpha \; \widetilde{D_\tau}u^\alpha_{\mathrm{J}_1 }\; \widetilde{D_\tau}u^\alpha_{\mathrm{J}_2}\; \cdots)|_{g=\rho(\zede)}\,\left.\frac{\partial(\widetilde{u^\alpha},\widetilde{u^\alpha_{\mathrm{J}_1}},\widetilde{u^\alpha_{\mathrm{J}_2}},...)}{\partial(u^\alpha,u^\alpha_{\mathrm{J}_1},u^\alpha_{\mathrm{J}_2},...)}\right|_{g=\rho(\zede)}^T,
\end{equation}
where the $\mathrm{J}_k$ are multi-indices of differentiation with respect to $x_i$, for $i=1,...,p$. 

We know that the Jacobian matrix $\mathcal{J}=\mathrm{d}\widetilde{\mathbf{x}}/\mathrm{d}\mathbf{x}|_{g=\rho(\zede)}$ can be written as a partitioned matrix
$$\mathcal{J}=\left(\begin{array}{cccc}
\left.\frac{\partial \widetilde{x_1}}{\partial x_1}\right|_{g=\rho(\zede)} & \cdots & \left.\frac{\partial \widetilde{x_1}}{\partial x_p}\right|_{g=\rho(\zede)} & \left.\frac{\partial \widetilde{x_1}}{\partial \tau}\right|_{g=\rho(\zede)}\\
\vdots & \ddots & \vdots & \vdots\\
\left.\frac{\partial \widetilde{x_p}}{\partial x_1}\right|_{g=\rho(\zede)} & \cdots & \left.\frac{\partial \widetilde{x_p}}{\partial x_p}\right|_{g=\rho(\zede)} & \left.\frac{\partial \widetilde{x_p}}{\partial \tau}\right|_{g=\rho(\zede)}\\
\left.\frac{\partial \widetilde{\tau}}{\partial x_1}\right|_{g=\rho(\zede)} & \cdots & \left.\frac{\partial \widetilde{\tau}}{\partial x_p}\right|_{g=\rho(\zede)} & \left.\frac{\partial \widetilde{\tau}}{\partial \tau}\right|_{g=\rho(\zede)}
\end{array}\right)=\left(\begin{array}{cc}
A^T & \mathbf{b}^T\\
\mathbf{0} & 1\end{array}\right),$$ 
where this defines $A$ and $\mathbf{b}$, and that
$$\widetilde{D_\tau}u^\alpha_{\mathrm{J}_\ell}|_{g=\rho(\zede)}=-\mathbf{b}A^{-1}\left(\begin{array}{c}
\partial_{x_1}\\
\vdots\\
\partial_{x_p}\end{array}\right)u^\alpha_{\mathrm{J}_\ell}+\frac{\partial u^\alpha_{\mathrm{J}_\ell}}{\partial \tau}=\frac{\partial u^\alpha_{\mathrm{J}_\ell}}{\partial \tau}-\frac{\partial x_1}{\partial \tau}u^\alpha_{{\mathrm{J}_\ell}1}-\cdots -\frac{\partial x_p}{\partial \tau}u^\alpha_{{\mathrm{J}_\ell}p}.
$$
{Next consider
\begin{equation}\label{correspondence}
\begin{array}{l}
\displaystyle{\left.\frac{\partial \widetilde{u^\alpha}}{\partial \tau}\right|_{g=e}-\left.\frac{\partial \widetilde{x_1}}{\partial \tau}\right|_{g=e}u^\alpha_{1}-\cdots -\left.\frac{\partial \widetilde{x_p}}{\partial \tau}\right|_{g=e}u^\alpha_{p}=u^\alpha_\tau}\\[10pt]
\quad\displaystyle{=Q^\alpha_i=\phi^\alpha_i-\sum_{k=1}^p \xi^k_i u^\alpha_k=\left.\frac{\partial \widetilde{u^\alpha}}{\partial a_i}\right|_{g=e}-\left.\frac{\partial \widetilde{x_1}}{\partial a_i}\right|_{g=e}u^\alpha_{1}-\cdots -\left.\frac{\partial \widetilde{x_p}}{\partial a_i}\right|_{g=e}u^\alpha_{p}},
\end{array}\end{equation}
and
\begin{equation}\label{correspondence1}
\begin{array}{l}
\displaystyle{\left.\frac{\partial \widetilde{u^\alpha_{\mathrm{J}_\ell}}}{\partial \tau}\right|_{g=e}-\left.\frac{\partial \widetilde{x_1}}{\partial \tau}\right|_{g=e}u^\alpha_{\mathrm{J}_\ell 1}-\cdots -\left.\frac{\partial \widetilde{x_p}}{\partial \tau}\right|_{g=e}u^\alpha_{\mathrm{J}_\ell p}=u^\alpha_{\mathrm{J}_\ell\tau}}\\[10pt]
\quad\displaystyle{=D_{\mathrm{J}_\ell}Q^\alpha_i=\phi^\alpha_{\mathrm{J}_\ell,i}-\sum_{k=1}^p \xi^k_i u^\alpha_{\mathrm{J}_\ell k}=\left.\frac{\partial \widetilde{u^\alpha_{\mathrm{J}_\ell}}}{\partial a_i}\right|_{g=e}-\left.\frac{\partial \widetilde{x_1}}{\partial a_i}\right|_{g=e}u^\alpha_{\mathrm{J}_\ell 1}-\cdots -\left.\frac{\partial \widetilde{x_p}}{\partial a_i}\right|_{g=e}u^\alpha_{\mathrm{J}_\ell p}},
\end{array}\end{equation}
so that $\tau$ is considered to be the group parameter, $a_i$.}
 
Furthermore, from Theorem \ref{equivalentto3.3.10} we know that 
\begin{equation}\label{formulaofT3310}
\mathcal{A}d(\rho)^{-1}\mathscr{Q}^\alpha(J,I)=\mathscr{Q}^\alpha(\zede)\left.\left(\frac{\partial \widetilde{u^\alpha_\mathrm{J}}}{\partial u^\alpha_\mathrm{J}}\right)^{T}\right|_{g=\rho(\zede)}
\end{equation}
where $\mathscr{Q}^\alpha(\zede)=(D_\mathrm{K}(Q^\alpha_i))$.

Substituting the vector $(I^\alpha_\tau \; I^\alpha_{\mathrm{J}_1 \tau} \; I^\alpha_{\mathrm{J}_2\tau}\; \cdots)$ in (\ref{invboundaryterms}) by its expression in Equation  (\ref{splitininv}) yields
\footnotesize{$$\sum_{k=1}^p \mathrm{d}\left((-1)^{k-1}\left(\sum_{\alpha} (\widetilde{D_\tau}u^\alpha \; \widetilde{D_\tau}u^\alpha_{\mathrm{J}_1 }\; \widetilde{D_\tau}u^\alpha_{\mathrm{J}_2}\; \cdots)|_{g=\rho(\zede)}\left.\frac{\partial\widetilde{u^\alpha_\mathrm{J}}}{\partial u^\alpha_\mathrm{J}}\right|_{g=\rho(\zede)}^T \mathcal{C}^\alpha_{k}\right)I(\mathrm{d}x_1)\cdots \widehat{I(\mathrm{d}x_k)}\cdots I(\mathrm{d}x_p)\right).$$}\normalsize
By (\ref{correspondence}) and (\ref{correspondence1}), the vector $ (\widetilde{D_\tau}u^\alpha \; \widetilde{D_\tau}u^\alpha_{\mathrm{J}_1 }\; \widetilde{D_\tau}u^\alpha_{\mathrm{J}_2}\; \cdots)$ in the above equation can be substituted by every single row of the matrix of characteristics $\mathscr{Q}^\alpha(\zede)$. Hence, for each independent group parameter $a_i$ we obtain
$$\sum_{k=1}^p \mathrm{d}\left((-1)^{k-1}\left(\sum_{\alpha} \mathscr{Q}^\alpha_i(\zede) \left.\frac{\partial \widetilde{u^\alpha_\mathrm{J}}}{\partial u^\alpha_\mathrm{J}}\right|_{g=\rho(\zede)}^T \mathcal{C}^\alpha_{k}\right)I(\mathrm{d}x_1)\cdots \widehat{I(\mathrm{d}x_k)}\cdots I(\mathrm{d}x_p)\right),\quad i=1,...,r,$$
where $\mathscr{Q}^\alpha_i(\zede)$ corresponds to row $i$ in $\mathscr{Q}^\alpha(\zede)$.

If we have $r$ group parameters describing group elements near the identity of the group, we can write the $r$ equations in matrix form as
$$\sum_{k=1}^p \mathrm{d}\left((-1)^{k-1}\left(\sum_{\alpha} \mathscr{Q}^\alpha(\zede) \left.\frac{\partial \widetilde{u^\alpha_\mathrm{J}}}{\partial u^\alpha_\mathrm{J}}\right|_{g=\rho(\zede)}^T \mathcal{C}^\alpha_{k}\right)I(\mathrm{d}x_1)\cdots \widehat{I(\mathrm{d}x_k)}\cdots I(\mathrm{d}x_p)\right).$$
Using the equality (\ref{formulaofT3310}), we obtain
\begin{equation}\label{almostthere}
\sum_{k=1}^p \mathrm{d}\left((-1)^{k-1}\left(\mathcal{A}d(\rho)^{-1}\sum_\alpha \mathscr{Q}^\alpha(J,I)\mathcal{C}^\alpha_k \right)I(\mathrm{d}x_1)\cdots \widehat{I(\mathrm{d}x_k)}\cdots I(\mathrm{d}x_p) \right).
\end{equation}

Next, it is a standard computation in differential exterior algebra to show that 
\begin{equation}\label{invariantp_1forms}
\left(\begin{array}{c}
\widehat{I(\mathrm{d}x_1)}I(\mathrm{d}x_2)\cdots I(\mathrm{d}x_p)\\
I(\mathrm{d}x_1)\widehat{I(\mathrm{d}x_2)}\cdots I(\mathrm{d}x_p)\\
\vdots\\
I(\mathrm{d}x_1)\cdots I(\mathrm{d}x_{p-1})\widehat{I(\mathrm{d}x_p)}
\end{array}\right)=\underbrace{\left(\begin{array}{cccc}
\mathsf{M}_{11} & \mathsf{M}_{12} & \cdots &\mathsf{M}_{1p}\\
\mathsf{M}_{21} & \mathsf{M}_{22} & \cdots & \mathsf{M}_{2p}\\
\vdots & \vdots & \ddots & \vdots \\
\mathsf{M}_{p1} & \mathsf{M}_{p2} & \cdots  &\mathsf{M}_{pp}
\end{array}\right)}_{\mathsf{M}_{\mathcal{J}}}\underbrace{\left(\begin{array}{c}
\widehat{\mathrm{d}x_1}\mathrm{d}x_2\cdots\mathrm{d}x_p\\
\mathrm{d}x_1\widehat{\mathrm{d}x_2}\cdots\mathrm{d}x_p\\
\vdots\\
\mathrm{d}x_1\cdots\mathrm{d}x_{p-1}\widehat{\mathrm{d}x_p}
\end{array}\right)}_{\mathrm{d}^{p-1}\widehat{\mathbf{x}}},
\end{equation}
where $\mathsf{M}_{\mathcal{J}}$ is the matrix of first minors of the Jacobian matrix $\mathcal{J}$. Thus, (\ref{almostthere}) reduces to
\begin{equation}\label{firstpart}
\sum_{k=1}^p \mathrm{d}\left(\mathcal{A}d(\rho)^{-1}\left(\sum_\alpha(-1)^{k-1} \mathscr{Q}^\alpha(J,I)\mathcal{C}^\alpha_k \right)\mathsf{M}_\mathcal{J}\mathrm{d}^{p-1}\widehat{\mathbf{x}}\right), 
\end{equation}
and we have thus found the invariantized version of $\mathsf{Div}(\mathrm{A})$. We must now find the invariantized version of the term $\mathsf{Div}(\bar{L}\Xi_{i})$ in the infinitesimal criterion of invariance, for $i=1,...,r$. We know from Theorem \ref{adjointactioncons} that
$$\begin{array}{l}
\left(\begin{array}{c}
\displaystyle{\sum_{k=1}^p (-1)^{k-1}C^1_k(\widetilde{\mathbf{x}},\widetilde{\mathbf{u}},\widetilde{\mathbf{u_x}},\Xi_1(\widetilde{\mathbf{x}},\widetilde{\mathbf{u}}),\Phi_1(\widetilde{\mathbf{x}},\widetilde{\mathbf{u}}))\mathrm{d}\widetilde{x_1}...\,\widehat{\mathrm{d}\widetilde{x_k}}...\,\mathrm{d}\widetilde{x_p}}\\
\vdots\\
\displaystyle{\sum_{k=1}^p (-1)^{k-1}C^r_k(\widetilde{\mathbf{x}},\widetilde{\mathbf{u}},\widetilde{\mathbf{u_x}},\Xi_1(\widetilde{\mathbf{x}},\widetilde{\mathbf{u}}),\Phi_1(\widetilde{\mathbf{x}},\widetilde{\mathbf{u}}))\mathrm{d}\widetilde{x_1}...\,\widehat{\mathrm{d}\widetilde{x_k}}...\,\mathrm{d}\widetilde{x_p}}
\end{array}\right)\\[60pt]
\quad=\mathcal{A}d(g)\left(\begin{array}{c}
\displaystyle{\sum_{k=1}^p (-1)^{k-1}C^1_k(\mathbf{x},\mathbf{u},\mathbf{u_x},\Xi_1(\mathbf{x},\mathbf{u}),\Phi_1(\mathbf{x},\mathbf{u}))\mathrm{d}x_1...\,\widehat{\mathrm{d}x_k}...\,\mathrm{d}x_p}\\
\vdots\\
\displaystyle{\sum_{k=1}^p (-1)^{k-1}C^r_k(\mathbf{x},\mathbf{u},\mathbf{u_x},\Xi_1(\mathbf{x},\mathbf{u}),\Phi_1(\mathbf{x},\mathbf{u}))\mathrm{d}x_1...\,\widehat{\mathrm{d}x_k}...\,\mathrm{d}x_p}
\end{array}\right).
\end{array}$$
Thus,
$$\begin{array}{l}
\displaystyle{\sum_{k=1}^p(-1)^{k-1}\bar{L}(\widetilde{\mathbf{x}},\widetilde{\mathbf{u}}\widetilde{,u^\alpha_\mathrm{K}})(\Xi(\widetilde{\mathbf{x}},\widetilde{\mathbf{u}}))_k\mathrm{d}\widetilde{x_1}...\,\widehat{\mathrm{d}\widetilde{x_k}}...\,\mathrm{d}\widetilde{x_p}}\\
\quad\displaystyle{= \mathcal{A}d(g)\sum_{k=1}^p(-1)^{k-1}\bar{L}(\mathbf{x},\mathbf{u},u^\alpha_\mathrm{K})(\Xi(\mathbf{x},\mathbf{u}))_k\mathrm{d}x_1...\,\widehat{\mathrm{d}x_k}...\,\mathrm{d}x_p,}
\end{array}$$
where $(\Xi(\mathbf{x},\mathbf{u}))_k$ is the $k^\mathrm{th}$ column of $\Xi(\mathbf{x},\mathbf{u})$. Evaluating this at the frame and rearranging produces the boundary term, $\mathsf{Div}(\bar{L}(\Xi)_k)$,
\begin{equation}\label{secondpart}
\mathrm{d}\left(\displaystyle{\mathcal{A}d(\rho)^{-1}
\sum_{k=1}^p(-1)^{k-1}L[\kappa](\Xi(J,I))_k I(\mathrm{d}x_1)...\,\widehat{I(\mathrm{d}x_k)}...\,I(\mathrm{d}x_p)}\right).
\end{equation}
Thus, adding the boundary terms (\ref{firstpart}) and (\ref{secondpart}) yields 
$$\mathrm{d} \left(\mathcal{A}d(\rho)^{-1}\left(\boldsymbol{\upsilon}_1,\cdots,\boldsymbol{\upsilon}_p\right)\mathsf{M}_{\mathcal{J}}\,\mathrm{d}^{p-1}\widehat{\mathbf{x}}\,\right)=0,$$
as required. $\hfill \Box$
\end{proof}

In terms of calculating the conservation laws in the form
$$\mathrm{d}\left(\mathcal{A}d(\rho)^{-1}(\boldsymbol{\upsilon}_1,...,\boldsymbol{\upsilon}_p)\mathsf{M}_\mathcal{J}\mathrm{d}^{p-1}\widehat{\mathbf{x}}\right)=0,$$
the vectors of invariants can be obtained by either
\begin{enumerate}
\item invariantization of the components of the law in the original coordinates, or
\item using the formula (\ref{vectorofinvs}).
\end{enumerate}
As there exists software which calculates the conservation laws (Maple package JetCalculus), it will usually be easier to invariantize the conservation laws to obtain the vectors of invariants, rather than perform the invariantized integration by parts.

\begin{example}\label{illustration}
Here we illustrate how the different components of the conservation laws in Example \ref{conservationlawsmonge} are obtained. {We have already obtained the Adjoint representation $\mathcal{A}d(g)$ for SL(2) in Example \ref{SL2adjointrep}. Inverting $\mathcal{A}d(g)$ in (\ref{Adjointrepresentation}) and evaluating it  at the frame (\ref{framesl2}) yields $\mathcal{A}d(\rho)^{-1}$.}

Theorem \ref{conslawsnoninvindvar} tells us that to obtain the vectors of invariants,  we need to compute the invariantized matrix of characteristics, $\mathscr{Q}^u(J,I)$, and the vectors $\mathcal{C}^u_i$. The latter have already been calculated in Example \ref{exampleeleqnbts}; the elements of $\mathcal{C}^u_i$ correspond to the coefficients of the $I^\alpha _{\mathrm{J}\tau}$ in (\ref{vectorsC-alpha-i}). The invariantized matrix of characteristics is
$$\mathscr{Q}^u(J,I)=\bordermatrix{ & Q_u & D_x( Q_u) & D_y( Q_u)\cr
a & -I^u_1 & -I^u_1-I^u_{11} & -I^u_{12} \cr
b & 0 & 0 & -I^u_1\cr
c & 0 & -I^u_{12} & -I^u_{22}\cr}$$
and the $(\Xi(J,I))_i$, for $i=1,2$, are
$$(\Xi(J,I))_1=\bordermatrix{ & \xi^x\cr
a & 1\cr b & 0\cr c & 0\cr},\qquad (\Xi(J,I))_2=\bordermatrix{ & \xi^y\cr
a & 0\cr b & 0\cr c & 1\cr},$$
Thus, the vectors of invariants are
$$\begin{array}{c}
\boldsymbol{\upsilon}_1= \left(\begin{array}{c}
I^u_1I^u_{22}(I^u_1-2I^u)-I^uI^u_1I^u_{122}+I^u(I^u_{11}I^u_{22}-(I^u_{12})^2)\\
0\\
-I^uI^u_{12}I^u_{22}\end{array}\right),\\[40pt]
\boldsymbol{\upsilon}_2= \left(\begin{array}{c}
-I^uI^u_1(2I^u_{12}+I^u_{112})\\
I^uI^u_1I^u_{11}\\
-I^u(I^u_{12})^2\end{array}\right).
\end{array}$$
Finally, the Jacobian matrix $\mathcal{J}$ is
$$\left(\begin{array}{cc}
\left.\dfrac{\partial \widetilde{x}}{\partial x}\right|_{g=\rho(\zede)} & \left.\dfrac{\partial \widetilde{x}}{\partial y}\right|_{g=\rho(\zede)}\\[10pt]
\left.\dfrac{\partial \widetilde{y}}{\partial x}\right|_{g=\rho(\zede)} & \left.\dfrac{\partial \widetilde{y}}{\partial y}\right|_{g=\rho(\zede)}\end{array}\right)=\left(\begin{array}{cc}
\dfrac{u_x}{xu_x+yu_y} & \dfrac{u_y}{xu_x+yu_y}\\
-y & x\end{array}\right),$$
and its matrix of first minors ,$\mathsf{M}_{\mathcal{J}}$, is
$$\left(\begin{array}{cc}
x & -y\\
\dfrac{u_y}{xu_x+yu_y} & \dfrac{u_x}{xu_x+yu_y}\end{array}\right).$$

Although the vectors of invariants obtained here are not the same as those obtained in Example \ref{conservationlawsmonge} (these were obtained by invariantizing the laws), the resulting conservation laws are equivalent, i.e. the conservation laws differ by a trivial conservation laws. These are of two types: the
 first kind, where the trivial conservation law vanishes on all solutions of the given system, or,
 second kind, where it holds for any smooth function $\mathbf{u}=f(\mathbf{x})$.

To conclude this example, we summarise the information made available by employing the invariant calculus for this group action. For the frame with normalisation equations $\widetilde{x}=1$, $\widetilde{y}=0$ and $\widetilde{u_y}=0$, the differential algebra of invariants is generated by $u$ and $I(u_{yy})$. In addition to the Euler-Lagrange equation, which is now seen to be one equation for the two generators, there is also the syzygy, Equation (\ref{MainExSyzA}), providing a second equation connecting the generating invariants. In this case we can calculate the frame which is given in Equation (\ref{framesl2}). The invariant differentiation operators are given in Equations (\ref{MainEGInvDiffOps}) and (\ref{MainEGInvDiffOps1}), and setting the frame into the standard $2\times 2$ matrix form we have
\begin{equation}\label{MainExRhoEqs}
\mathcal{D}_x\rho \rho^{-1}=\left(\begin{array}{cc}1&\frac{\mathcal{D}_y\mathcal{D}_x(u)}{\mathcal{D}_x(u)}\\0&1\end{array}\right),\qquad 
\mathcal{D}_y\rho \rho^{-1}=\left(\begin{array}{cc}0&\frac{I(u_{yy})}{\mathcal{D}_x(u)}\\-1&0\end{array}\right).
\end{equation}
The differential compatibility of these equations also yields the syzygy between the generating invariants. Finally, we have the conservation laws, which when differentiated yield the Euler-Lagrange equation. Finally, we note that the frame, its Adjoint representation, the differential operators, the syzygies and the equations connecting the derivatives of the frame with the invariants, all remain unchanged as the Lagrangian is varied, so that these are a ``one time" calculation once the equations for the frame are chosen.
\end{example}
 
\section{Two variational problems with area and volume preserving symmetries}\label{twoexamples}
In this section, we present two exampless which illustrate how to obtain the conservation laws in this new format. The first example regards the conservation laws for the shallow water equations, due to the importance that conservation of potential vorticity plays in meteorology. In the second application we look at conservation laws arising from a linear SL(3) action on the base space, as it exemplifies the basic volume preserving action on a three-dimensional base space.

\subsection{Conservation laws for the shallow water equations}
The conservation laws for the shallow water equations are well-known \cite{BilaClarksonMansfield}; we are particularly interested in the conservation laws arising from the linear SL(2) action on the particle labels.

To ease the exposition, some notation is introduced. In the two-dimensional shallow water theory \cite{RubstovRoulstone}, a particle is represented by the Cartesian coordinates
\begin{equation}\label{cartcoord}
x=x(a,b,t),\qquad y=y(a,b,t),
\end{equation}
where $(a,b)\in \mathbb{R}^2$ are the particle labels and $t\in \mathbb{R}^+$ is time. At the reference time, $t=0$, 
$$x(a,b,0)=a,\qquad y(a,b,0)=b.$$
Usually we regard liquids, such as water, to be incompressible; the incompressibility hypothesis requires that
$$\dfrac{h(a,b,0)}{h(a,b,t)}=\dfrac{\partial(x,y)}{\partial(a,b)},$$
where $h$ is the fluid depth, and the Jacobian on the right is the one corresponding to the map (\ref{cartcoord}). In this paper we assume that $h(a,b,0)=1$, so the incompressibility hypothesis becomes
\begin{equation}
h(a,b,t)=\dfrac{1}{x_ay_b-x_by_a}.
\end{equation}

As shown by Salmon \cite{Salmon}, the following first order Lagrangian
\begin{equation}\label{Lagrangianswe}
\bar{L}\,\mathrm{d}a\mathrm{d}b\mathrm{d}t=\left((u-\bar{R})\dot{x}+(v+\bar{P})\dot{y}-\dfrac{1}{2}(u^2+v^2+gh)\right)\,\mathrm{d}a\mathrm{d}b\mathrm{d}t,
\end{equation}
where $g$ is a nonzero constant (corresponding to the combined effect of acceleration of gravity and a centrifugal component from the Earth's rotation), $\bar{P}=\bar{P}(x,y)$ and $\bar{R}=\bar{R}(x,y)$ satisfy 
$$\bar{P}_x+\bar{R}_y=f,\qquad \textrm{with the Coriolis parameter, }f=\textrm{constant},$$
has the shallow water equations
\begin{align}
\dot{x}&=u,\\[-10pt]
\dot{y}&=v,\\[-10pt]
\dot{u}&+gh(y_bh_a-y_ah_b)-fv=0,\\[-10pt]
\dot{v}&+gh(x_ah_b-x_bh_a)+fu=0,
\end{align}
 as the associated Euler-Lagrange equations.
 
 To simplify we will consider $\bar{P}$ and $\bar{R}$ to be linear functions of $x$ and $y$, i.e. 
 $$\bar{P}=c_1x+c_2y+c_3\quad \mathrm{and}\quad \bar{R}=c_4x+c_5y+c_6.$$
 The following vector field
  $$-S_b(a,b)\partial_a+S_a(a,b)\partial_b,\qquad S_b=-\xi, S_a=\eta,$$
 where $\xi$ and $\eta$ are the infinitesimals of the group action on the base space, generates the particle relabelling symmetry group \cite{BilaClarksonMansfield}. The generators of the linear SL(2) action are of this type; the action is
 $$\left(\begin{array}{c}
 \widetilde{a}\\ \widetilde{b}\end{array}\right)=\left(\begin{array}{cc} \alpha& \beta\\ \gamma& \delta\end{array}\right)\left(\begin{array}{c}a\\b\end{array}\right),\qquad \widetilde{t}=t, \qquad \alpha\delta-\beta\gamma=1.$$
We now find  the associated conservation laws.

We start by calculating the moving frame using as normalization equations
$$\widetilde{a}=0, \qquad \widetilde{b}=1,\qquad \widetilde{x_a}=0,$$
which yields
\begin{equation}\label{frameofswe}
\alpha=b,\qquad \beta=-a,\qquad \gamma=\dfrac{x_a}{ax_a+bx_b},
\end{equation}
as the moving frame in parametric form.

We already have the adjoint representation for SL(2) (see (\ref{Adjointrepresentation})); so evaluating it at the frame (\ref{frameofswe}) and inverting it gives $\mathcal{A}d(\rho)^{-1}$ (see first matrix of (\ref{consforswe})). Next we need to compute the vectors of invariants. For this, we introduce a dummy variable $\tau$ and set $x=x(a,b,t,\tau)$, $y=y(a,b,t,\tau)$, $u=u(a,b,t,\tau)$, and $v=v(a,b,t,\tau)$. Proceeding as in Section (\ref{structureNCL}), we rewrite the Lagrangian (\ref{Lagrangianswe}) in terms of the invariants; then differentiating and integrating by parts yields the invariantized shallow water equations 
\small{\begin{align}\nonumber
&f I^y_3-I^u_3+\frac{gI^y_2}{(I^x_2)^3(I^y_1)^3}(I^y_{11}I^x_2-I^x_{11}I^y_2+I^x_{12}I^y_1)+\frac{g}{(I^x_2)^3(I^y_1)^2}(I^x_{12}I^y_2-I^y_{12}I^x_2-I^x_{22}I^y_1)=0,\\\nonumber
&-f I^x_3-I^v_3-\frac{g}{(I^x_2)^2(I^y_1)^3}(I^y_{11}I^x_2-I^x_{11}I^y_2+I^x_{ 12}I^y_1)=0,\\\nonumber
&I^x_3-I^u=0,\\\nonumber
&I^y_3-I^v=0,
\end{align}}\normalsize
as expected, and the boundary terms
\small{\begin{align}\nonumber
&\mathcal{D}_a\left(\left(\frac{gI^y_2I^x_4}{2(I^x_2)^2(I^y_1)^2}-\frac{gI^y_4}{2I^x_2(I^y_1)^2}\right)I(\mathrm{d}a)I(\mathrm{d}b)I(\mathrm{d}t)\right)+\mathcal{D}_b\left(\left(-\frac{gI^x_4}{2(I^x_2)^2I^y_1}\right)I(\mathrm{d}a)I(\mathrm{d}b)I(\mathrm{d}t)\right)\\\nonumber
&\quad+\mathcal{D}_t\left(\left( (I^u-R)I^x_4+(I^v+P)I^y_4\right)I(\mathrm{d}a)I(\mathrm{d}b)I(\mathrm{d})t\right)=0,
\end{align}}\normalsize
where $P$ and $R$ are the invariantized versions of $\bar{P}$ and $\bar{R}$, respectively.

Thus, the vectors of invariants are
$$\boldsymbol{\upsilon}_1(J,I)=\underbrace{\begin{pmatrix}I^x_2\\0\\0\end{pmatrix}}_{\mathscr{Q}^x}\frac{gI^y_2}{2(I^x_2)^2(I^y_1)^2}-\underbrace{\begin{pmatrix}I^y_2\\-I^y_1\\0\end{pmatrix}}_{\mathscr{Q}^y}\frac{g}{2I^x_2(I^y_1)^2}+L\underbrace{\begin{pmatrix}0\\1\\0 \end{pmatrix}}_{(\Xi)_1}=\begin{pmatrix}0\\L+\dfrac{g}{2I^x_2I^y_1}\\0\end{pmatrix},$$
$$\boldsymbol{\upsilon}_2(J,I)=\underbrace{\begin{pmatrix}I^x_2\\0\\0\end{pmatrix}}_{\mathscr{Q}^x}\frac{g}{2(I^x_2)^2I^y_1}-L\underbrace{\begin{pmatrix}-1\\0\\0\end{pmatrix}}_{(\Xi)_2}=\begin{pmatrix}L+\dfrac{g}{2I^x_2I^y_1}\\0\\0\end{pmatrix},$$
$$\boldsymbol{\upsilon}_3(J,I)=\underbrace{\begin{pmatrix}I^x_2\\0\\0\end{pmatrix}}_{\mathscr{Q}^x}(I^u-R)+\underbrace{\begin{pmatrix}I^y_2\\-I^y_1\\0\end{pmatrix}}_{\mathscr{Q}^y}(I^v+P)+L\underbrace{\begin{pmatrix}0\\0\\0 \end{pmatrix}}_{(\Xi)_3}=\begin{pmatrix}I^x_2(I^u-R)+I^y_2(I^v+P)\\-I^y_1(I^v+P)\\0\end{pmatrix}.$$

The matrix of first minors of the Jacobian matrix $\frac{\partial(\widetilde{a},\widetilde{b},\widetilde{t})}{\partial(a,b,t)}$ evaluated at the frame (\ref{frameofswe}) is
$$\mathsf{M}_{\mathcal{J}}=\begin{pmatrix}\dfrac{x_b}{ax_a+bx_b}& \dfrac{x_a}{ax_a+bx_b} & 0\\
-a & b & 0\\  0 & 0 & 1\end{pmatrix}.$$

Thus, the conservation laws are 
\begin{equation}\label{consforswe}\begin{array}{c}
\mathrm{d}\left(\begin{pmatrix}\dfrac{bx_b-ax_a}{ax_a+bx_b} & 2ab & \dfrac{2x_ax_b}{(ax_a+bx_b)^2}\\-\dfrac{bx_a}{ax_a+bx_b} & b^2 & -\dfrac{x_a^2}{(ax_a+bx_b)^2}\\
-\dfrac{ax_b}{ax_a+bx_b} & -a^2 & \dfrac{x_b^2}{(ax_a+bx_b)^2}\end{pmatrix}\right.\\
\qquad\qquad\qquad\qquad\times\begin{pmatrix} 0 & L+\dfrac{g}{2I^x_2I^y_1} & I^x_2(I^u-R)+I^y_2(I^v+P)\\
L+\dfrac{g}{2I^x_2I^y_1}  & 0& -I^y_1(I^v+P)\\
0 & 0 & 0\end{pmatrix}\\
\qquad\qquad\qquad\qquad\left.\times\begin{pmatrix}\dfrac{x_b}{ax_a+bx_b}& \dfrac{x_a}{ax_a+bx_b} & 0\\
-a & b & 0\\  0 & 0 & 1\end{pmatrix}\begin{pmatrix}\mathrm{d}b\mathrm{d}t\\ \mathrm{d}a\mathrm{d}t\\
\mathrm{d}a\mathrm{d}b\end{pmatrix}\right)=0.\end{array}
\end{equation}
Note that $L=\bar{L}(I)$.

In \cite{BridgesHydonReich} Bridges et al. proved that conservation of potential vorticity is a differential consequence of  some of the components of the 1-form quasi-conservation law, which relies on writing the shallow water equations as a multisymplectic system. Below we show that  conservation of potential vorticity is a differential consequence of the system of conservation laws (\ref{consforswe}).

Multiplying (\ref{consforswe}) through, we obtain
\begin{align}\label{consA}
&\mathrm{d}\left(\left(a\mathrm{F}_1\right)\mathrm{d}b\mathrm{d}t+\left(b\mathrm{F}_1\right)\mathrm{d}a\mathrm{d}t+\left(\dfrac{bx_b-ax_a}{ax_a+bx_b}\,\mathrm{F}_2-2ab\;\mathrm{F}_3\right)\mathrm{d}a\mathrm{d}b\right)=0,\\\label{consB}
&\mathrm{d}\left(\left(b\mathrm{F}_1\right)\mathrm{d}b\mathrm{d}t+\left(-\dfrac{bx_a}{ax_a+bx_b}\,\mathrm{F}_2-b^2\;\mathrm{F}_3\right)\mathrm{d}a\mathrm{d}b\right)=0,\\\label{consC}
&\mathrm{d}\left(-\left(a\mathrm{F}_1\right)\mathrm{d}a\mathrm{d}t+\left(-\dfrac{ax_b}{ax_a+bx_b}\,\mathrm{F}_2+a^2\;\mathrm{F}_3\right)\mathrm{d}a\mathrm{d}b\right)=0,
\end{align}
where $\mathrm{F}_1=L+g/(2I^x_2I^y_1)$, $\mathrm{F}_2=I^x_2(I^u-R)+I^y_2(I^v+P)$, and $\mathrm{F}_3=I^y_1(I^v+P)$. Performing the following operations, $D_a\left(b\cdot (\ref{consC})\right)-D_b\left(a\cdot (\ref{consB})\right) + (\ref{consA})$, on the above equations we obtain
\begin{align*}
&\left(D_a\left(D_b(ab\mathrm{F}_1)-a\mathrm{F}_1\right)+D_a\left(b D_t\left(\dfrac{-ax_b}{ax_a+bx_b}\mathrm{F}_2+a^2\mathrm{F}_3\right)\right)\right)\mathrm{d}a\mathrm{d}b\mathrm{d}t\\
&-\left(D_b(D_a(ab\mathrm{F}_1)-b\mathrm{F}_1)+D_b\left(aD_t\left(-\dfrac{bx_a}{ax_a+bx_b}\mathrm{F}_2-b^2\mathrm{F}_3\right)\right)\right)\mathrm{d}a\mathrm{d}b\mathrm{d}t\\
&+\left(D_a(a\mathrm{F}_1)-D_b(b\mathrm{F}_1)+D_t\left(\dfrac{bx_b-ax_a}{ax_a+bx_b}\mathrm{F}_2-2ab\mathrm{F}_3\right)\right)\mathrm{d}a\mathrm{d}b\mathrm{d}t\\
&=D_t\left(D_a\left(\dfrac{-abx_b}{ax_a+bx_b}\mathrm{F}_2+a^2b\mathrm{F}_3\right)+D_b\left(\dfrac{abx_a}{ax_a+bx_b}\mathrm{F}_2+ab^2\mathrm{F}_3\right)\right)\mathrm{d}a\mathrm{d}b\mathrm{d}t\\
&=D_t\left(ab\dfrac{I^x_{12}}{I^x_2}\mathrm{F}_2+2ab\mathrm{F}_3-ab\mathcal{D}_a\mathrm{F}_2+ab\mathcal{D}_b\mathrm{F}_3\right)\mathrm{d}a\mathrm{d}b\mathrm{d}t\\
&=-ab D_t\left(I^u_1I^x_2+I^y_2I^v_1-I^y_1I^v_2-I^x_2I^y_1f \right)\mathrm{d}a\mathrm{d}b\mathrm{d}t\\
&=-ab D_t(\Omega)=0,
\end{align*}
where $\Omega=1/h(\partial \dot{y}/\partial x-\partial\dot{x}/\partial y+f)$ represents the potential vorticity. Note that we have used the product rule and the definitions of the invariantized differential operators $\mathcal{D}_a$ and $\mathcal{D}_b$. Thus, conservation of potential vorticity is a differential consequence of Noether's conservation laws for the linear SL(2) action. More to the point, it does not require the full pseudogroup. This was also observed by Hydon, \cite{Hydon}, who found the conservation of potential vorticity as a differential consequence of the conservation of the linear momenta.

\subsection{Invariant variational problems under the SL(3) action}
Consider the linear SL(3) action on the base space $(x,y,z)$,
\begin{equation}\label{linearsl3}
\left(\begin{array}{c}
\widetilde{x}\\\widetilde{y}\\\widetilde{z}\end{array}\right)=\underbrace{\left(\begin{array}{ccc}a_{11} & a_{12} & a_{13}\\
a_{21} & a_{22} & a_{23}\\
a_{31} & a_{32} & a_{33}\end{array}\right)}_A\left(\begin{array}{c}x\\y\\z\end{array}\right), \qquad \det{A}=1,
\end{equation}
which leaves the dependent variables, $(u,v,w),$ invariant. 

Let $g\in \mathrm{SL(3)}$ act on the Jacobian $B=\frac{\partial(u,v,w)}{\partial(x,y,z)}$ and define the cross section by
\begin{equation}
g\cdot \frac{\partial(u,v,w)}{\partial(x,y,z)}=\left(\begin{array}{ccc} 1 & 0 & 0\\
0 & 1 & 0\\
0 & 0 & I^w_3\end{array}\right),
\end{equation}
where $I^w_3=(g\cdot w_z)|_{\mathrm{frame}}$. Thus, the moving frame in parametric form is
\begin{equation}\label{sl3frame}
(a_{11}, a_{12}, a_{13}, a_{21}, a_{22}, a_{23}, a_{31}, a_{32})=\left(u_x, u_y, u_z, v_x, v_y, v_z, \dfrac{w_x}{|B|},  \dfrac{w_y}{|B|}\right).
\end{equation}

Consider an invariant variational problem, written in terms of the invariants of the group action (\ref{linearsl3}), such as
\begin{equation}\label{sl3functional}
\iiint L(I^w,\mathcal{D}_z I^w) I(\mathrm{d}x)I(\mathrm{d}y)I(\mathrm{d}z).
\end{equation}
To calculate the invariantized Euler-Lagrange equations and its associated conservation laws, we introduce a dummy variable $\tau$ and set $u=u(x,y,z,\tau)$, $v=v(x,y,z,\tau)$, and $w=w(x,y,z,\tau)$.
Differentiating the functional (\ref{sl3functional}) in terms of $\tau$ and integrating by parts, we obtain
\begin{align}\nonumber
\mathcal{D}_\tau&\iiint L(I^w,\mathcal{D}_zI^w)I(\mathrm{d}x)I(\mathrm{d}y)I(\mathrm{d}z)\\\nonumber
=&\iiint \Big[-\mathcal{D}_x\left(\frac{\partial L}{\partial \mathcal{D}_zI^w}\right)I^w_3I^u_4-\mathcal{D}_y\left(\frac{\partial L}{\partial \mathcal{D}_zI^w}\right)I^w_3I^v_4+\left(\frac{\partial L}{\partial I^w}-\left(\frac{\partial L}{\partial\mathcal{D}_zI^w}\right)\right)I^w_4\Big]I(\mathrm{d}\mathbf{x})\\\label{ELeqnsBTs}
+&\iiint \Big[\mathcal{D}_x\left(\frac{\partial L}{\partial \mathcal{D}_zI^w}I^w_3I^u_4I(\mathrm{d}\mathbf{x})\right)+\mathcal{D}_y\left(\frac{\partial L}{\partial \mathcal{D}_zI^w}I^w_3I^v_4I(\mathrm{d}\mathbf{x})\right)+\mathcal{D}_z\left(\frac{\partial L}{\partial \mathcal{D}_zI^w}I^w_4I(\mathrm{d}\mathbf{x})\right)\Big],
\end{align}
where we have used the equality $\mathcal{D}_zI^w=I^w_3$, the commutator 
$$[\mathcal{D}_\tau,\mathcal{D}_z]=-\mathcal{D}_zI^u_4\mathcal{D}_x-\mathcal{D}_zI^v_4\mathcal{D}_y+(\mathcal{D}_xI^u_4+\mathcal{D}_yI^v_4)\mathcal{D}_z,$$
and the Lie derivatives of the invariant one-forms presented in the following table.

\begin{table}[h]
\begin{center}
\begin{tabular}{c|c|c|c|c}
& $I(\mathrm{d}x)$ & $I(\mathrm{d}y)$ & $I(\mathrm{d}z)$ & $I(\mathrm{d}\tau)$\\
\hline
$\mathcal{D}_x$ & \small{$-I^u_{14} I(\mathrm{d}\tau)$} \normalsize& \small{$-I^v_{14} I(\mathrm{d}\tau)$}\normalsize  & \small{$-\left(I^u_{11}+I^v_{12}+\dfrac{I^w_{13}}{I^w_3}\right) I(\mathrm{d}z)$}\normalsize & \small{0}\normalsize\\
& & & \small{$-\dfrac{I^w_{14}}{I^w_3} I(\mathrm{d}\tau)$}\normalsize & \\
$\mathcal{D}_y$ & \small{$-I^u_{24} I(\mathrm{d}\tau)$}\normalsize & \small{$-I^v_{24} I(\mathrm{d}\tau)$}\normalsize  & \small{$-\left(I^u_{12}+I^v_{22}+\dfrac{I^w_{23}}{I^w_3}\right) I(\mathrm{d}z)$}\normalsize & \small{0}\normalsize\\
& & & \small{$-\dfrac{I^w_{24}}{I^w_3} I(\mathrm{d}\tau)$}\normalsize & \\
$\mathcal{D}_z$ & \small{$-I^u_{34} I(\mathrm{d}\tau)$}\normalsize & \small{$-I^v_{34} I(\mathrm{d}\tau)$}\normalsize  & \small{$\left(I^u_{11}+I^v_{12}+\dfrac{I^w_{13}}{I^w_3}\right)I(\mathrm{d}x)$}\normalsize & \small{0}\normalsize\\
& & & \small{$+\left(I^u_{12}+I^v_{22}+\dfrac{I^w_{23}}{I^w_3}\right) I(\mathrm{d}y)$}\normalsize & \\
& & & \small{$+(I^u_{14}+I^v_{24})I(\mathrm{d}\tau)$}\normalsize & \\
$\mathcal{D}_\tau$ & \small{$I^u_{14}I(\mathrm{d}x)+I^u_{24}I(\mathrm{d}y)$}\normalsize & \small{$I^v_{14}I(\mathrm{d}x)+I^v_{24}I(\mathrm{d}y)$}\normalsize & \small{$\dfrac{I^w_{14}}{I^w_3}I(\mathrm{d}x)+\dfrac{I^w_{24}}{I^w_3}I(\mathrm{d}y)$}\normalsize & \small{0}\normalsize\\
& \small{$I^u_{34}I(\mathrm{d}z)$}\normalsize & \small{$I^v_{34}I(\mathrm{d}z)$}\normalsize & \small{$-(I^u_{14}+I^v_{24})I(\mathrm{d}z)$}\normalsize & 
\end{tabular}
\textbf{\caption{Lie derivatives of the invariant one-forms.}}
\end{center}
\end{table}

Notice that the coefficients of $I^u_4$, $I^v_4$, and $I^w_4$ in (\ref{ELeqnsBTs}), which are not in the boundary terms, correspond to the invariantized Euler-Lagrange equations with respect to $u$, $v$, and $w$, respectively.

Proceeding as in Section \ref{structureNCL}, we let $g\in \mathrm{SL(3)}$ act  linearly on its generating vector fields
$$x\partial_x-z\partial_z,\quad y\partial_x,\quad z\partial_x,\quad x\partial_y,\quad y\partial_y-z\partial_z,\quad z\partial_y,\quad x\partial_z,\quad y\partial_z.$$
This yields the adjoint representation, $\mathcal{A}d(g)$, for SL(3)
\begin{equation}
\left(\begin{array}{cccccccc}
M_{11}A-M_{31}[R_3,\mathbf{0},\mathbf{0}]^T & -M_{12}A+M_{32}[R_3,\mathbf{0},\mathbf{0}]^T & M_{13}[C_1,C_2]-M_{33}\bigl(\begin{smallmatrix}
a_{31}&a_{32}\\ 0&0\\0&0
\end{smallmatrix} \bigr)\\
-M_{21}A-M_{31}[\mathbf{0},R_3,\mathbf{0}]^T & M_{22}A+M_{32}[\mathbf{0},R_3,\mathbf{0}]^T & -M_{23}[C_1,C_2]-M_{33}\bigl(\begin{smallmatrix}
0&0\\a_{31}&a_{32}\\0&0
\end{smallmatrix} \bigr)\\\
M_{31}[R_1,R_2]^T & -M_{32}[R_1,R_2]^T & M_{33}\bigl(\begin{smallmatrix}
a_{11}&a_{12}\\ a_{21}&a_{22}
\end{smallmatrix} \bigr)
\end{array}\right),
\end{equation}
where the column vectors $R_i$, for $i=1,2,3$, and $C_j$, for $j=1,2$, represent, respectively, the rows and columns of matrix $A$ defined in (\ref{linearsl3}), the $M_{mn}$, for $m,n=1,2,3$, represent the first minors of $A$, and the $a_{mn}$ are elements of the matrix $A$. Evaluating $\mathcal{A}d(g)^{-1}$ at the frame (\ref{sl3frame}) yields $\mathcal{A}d(\rho)^{-1}$.

The vectors of invariants, $\boldsymbol{\upsilon}_i=(-1)^{i-1}\left(\sum_\alpha\mathscr{Q}^\alpha(J,I)\mathcal{C}^\alpha_i+L(\Xi(J,I))_i\right)$, are
\begin{equation*}
\boldsymbol{\upsilon}_1(J,I)=\kern-2pt\left(\begin{array}{c}J^x\left(L-I^w_3\dfrac{\partial L}{\partial\mathcal{D}_xI^w}\right)\\
J^y\left(L-I^w_3\dfrac{\partial L}{\partial\mathcal{D}_xI^w}\right)\\
J^z\left(L-I^w_3\dfrac{\partial L}{\partial\mathcal{D}_xI^w}\right)\\
0\\0\\0\\0\\0\end{array}\right),\qquad
\boldsymbol{\upsilon}_2(J,I)=\kern-2pt\left(\begin{array}{c}0\\0\\0\\J^x\left(-L+I^w_3\dfrac{\partial L}{\partial\mathcal{D}_xI^w}\right)\\
J^y\left(-L+I^w_3\dfrac{\partial L}{\partial\mathcal{D}_xI^w}\right)\\
J^z\left(-L+I^w_3\dfrac{\partial L}{\partial\mathcal{D}_xI^w}\right)\\
0\\0\end{array}\right),\end{equation*}
\begin{equation*}
\boldsymbol{\upsilon}_3(J,I)=\kern-2pt\left(\begin{array}{c}J^z\left(-L+I^w_3\dfrac{\partial L}{\partial\mathcal{D}_xI^w}\right)\\0\\0\\0\\J^z\left(-L+I^w_3\dfrac{\partial L}{\partial\mathcal{D}_xI^w}\right)\\0\\
J^x\left(L-I^w_3\dfrac{\partial L}{\partial\mathcal{D}_xI^w}\right)\\
J^y\left(L-I^w_3\dfrac{\partial L}{\partial\mathcal{D}_xI^w}\right)\end{array}\right),
\end{equation*}
where we have used 
$$\mathscr{Q}^u(J,I)=\left(\begin{array}{c}
-J^x\\ -J^y\\ -J^z\\ 0\\ 0\\ 0\\ 0\\ 0\end{array}\right),\quad \mathscr{Q}^v(J,I)=\left(\begin{array}{c}
0\\ 0\\ 0\\-J^x\\ -J^y\\ -J^z\\ 0\\ 0\\\end{array}\right),\quad \mathscr{Q}^w(J,I)=\left(\begin{array}{c}
J^zI^w_3\\ 0\\ 0\\ 0\\ J^zI^w_3\\ 0\\ -J^xI^w_3\\ -J^yI^w_3\end{array}\right),$$
$$(\Xi(J,I))_1=\left(\begin{array}{c}J^x\\J^y\\J^z\\0\\0\\0\\0\\0\end{array}\right),\quad (\Xi(J,I))_2=\left(\begin{array}{c}0\\0\\0\\J^x\\J^y\\J^z\\0\\0\end{array}\right),\quad(\Xi(J,I))_3=\left(\begin{array}{c}-J^z\\0\\0\\0\\-J^z\\0\\J^x\\J^y\end{array}\right).$$

Finally, we calculate the last component of the conservation laws, the matrix of first minors of the Jacobian $\mathcal{J}=\left.\frac{\partial(\widetilde{x},\widetilde{y},\widetilde{z})}{\partial(x,y,z)}\right|_{\textrm{frame}}$. Thus,
\begin{equation*}
\mathsf{M}_{\mathcal{J}}=\left(\begin{array}{ccc}
\dfrac{v_yw_z-v_zw_y}{|B|} & \dfrac{v_xw_z-v_zw_x}{|B|} & \dfrac{v_xw_y-v_yw_x}{|B|}\\
\dfrac{u_yw_z-u_zw_y}{|B|} & \dfrac{u_xw_z-u_zw_x}{|B|} & \dfrac{u_xw_y-u_yw_x}{|B|}\\
u_yv_z-u_zv_y & u_xv_z-u_zv_x & u_xv_y-u_yv_x\end{array}\right).
\end{equation*}
Hence, the conservation laws are
$$\mathrm{d}\left(\mathcal{A}d(\rho)^{-1}\left(\boldsymbol{\upsilon}_1(J,I),\boldsymbol{\upsilon}_2(J,I),\boldsymbol{\upsilon}_3(J,I)\right)\mathsf{M}_{\mathcal{J}}\mathrm{d}^2\widehat{\mathbf{x}}\right)=0.$$

\section{Conclusion}
In Theorem 3 of \cite{GoncalvesMansfield}, it was shown that for Lagrangians which are invariant under a certain group action, and whose independent variables are left unchanged by that action, the conservation laws can be written as the product of the adjoint representation of a moving frame for the Lie group action and vectors of invariants; in this new format, the laws are handled and analysed more easily.

In this paper we have generalised this result to include cases where the independent variables of a Lagrangian participate in the action. The structure of these conservation laws differs from the ones in Theorem 3 of \cite{GoncalvesMansfield} by a matrix factor, which represents the action on the $(p-1)$-forms, and by some invariant terms in the vectors of invariants, $\boldsymbol{\upsilon}_i(J,I)$. 

It is interesting to note that from (\ref{simplofvolform}) we know that 
$$\mathrm{d}\left(\mathcal{A}d\rho^{-1}(\boldsymbol{\upsilon}_1,...,\boldsymbol{\upsilon}_p)\mathsf{M}_{\mathcal{J}}\mathrm{d}^{p-1}\widehat{\mathbf{x}}\right)=0$$
is equivalent to
$$\sum_{i=1}^p\mathcal{D}_i\left(\mathcal{A}d(\rho)^{-1}\boldsymbol{\upsilon}_i I(\mathrm{d}^p\mathbf{x})\right)=0,$$
which simplifies to an equivalent form of the Euler-Lagrange system,
$$\sum_{i=1}^p\left(\mathcal{D}_i(\boldsymbol{\upsilon}_i)-\mathcal{D}_i(\mathcal{A}d(\rho))\mathcal{A}d(\rho)^{-1}\boldsymbol{\upsilon}_i+c_i(J,I)\boldsymbol{\upsilon}_i\right)=0,$$
where $\mathcal{D}_i(\mathcal{A}d(\rho))\mathcal{A}d(\rho)^{-1}$ is known as the \textit{curvature matrix} (see \cite{MansfieldvanderKamp} for further details), which is invariant, and $c_i(J,I)$ is the coefficient of $I(\mathrm{d}^p\mathbf{x})$ in $\mathcal{D}_i(I(\mathrm{d}^p\mathbf{x}))$.

This new version of Noether's conservation laws brings insight into the structure of the laws. Using invariants and a frame usually condenses the number of terms needed to write down the laws, and makes explicit their structure by using the same invariants as those needed to write down the Euler-Lagrange equations. As for Theorem 3 in \cite{GoncalvesMansfield}, Theorem \ref{conslawsnoninvindvar} can simplify finding the solution for the extremals for one-dimensional invariant variational problems, as was shown in our motivating example, provided the Adjoint representation is non trivial.

\appendix
\section{Appendix}\label{apendice}
In this appendix, we give the proof of Lemma \ref{lemmaaboutcoeff} which shows how an element $g \in G$ acts on a differential form. Furthermore, we present a result on the Adjoint action as induced on the generating infinitesimal vector fields, which is equivalent to Theorem 3.3.10 of \cite{Mansfield}, but whose format allows to prove Theorem \ref{conslawsnoninvindvar}.

\begin{proof} \textbf{of Lemma \ref{lemmaaboutcoeff}} 
We have
$$
\mathrm{d}\widetilde{x_j}\wedge (-1)^{k-1}\mathrm{d}\widetilde{x_1}...\,\widehat{\mathrm{d}\widetilde{x_k}}...\,\mathrm{d}\widetilde{x_p}
=\left\{\begin{array}{ll}\mathrm{d}\widetilde{x_1}...\,\mathrm{d}\widetilde{x_p}=\det\left(\dfrac{\mathrm{d}\widetilde{\mathbf{x}}}{\mathrm{d}\mathbf{x}}\right)\mathrm{d}x_1...\,\mathrm{d}x_p,
& j=k,\\
0,&\mbox{else.}\end{array}\right.
$$

Note that we can write
$$(-1)^{k-1}\mathrm{d}\widetilde{x_1}...\,\widehat{\mathrm{d}\widetilde{x_k}}...\,\mathrm{d}\widetilde{x_p}$$
as
$$\sum_{\ell=1}^p(-1)^{k+\ell-2}Z^k_\ell\mathrm{d}x_1...\,\widehat{\mathrm{d}x_\ell}...\,\mathrm{d}x_p$$
and therefore,
$$\mathrm{d}\widetilde{x_j}\wedge (-1)^{k-1}\mathrm{d}\widetilde{x_1}...\,\widehat{\mathrm{d}\widetilde{x_k}}...\,\mathrm{d}\widetilde{x_p}=\sum_{\ell=1}^p\dfrac{\mathrm{d}\widetilde{x_j}}{\mathrm{d}x_\ell}(-1)^{k-1}Z^k_\ell\mathrm{d}x_1...\,\mathrm{d}x_p=\delta_{jk}\det\left(\dfrac{\mathrm{d}\widetilde{\mathbf{x}}}{\mathrm{d}\mathbf{x}}\right)\mathrm{d}x_1...\,\mathrm{d}x_p,$$
i.e.
\begin{equation}\label{coeffZ}
\sum_{\ell=1}^p\dfrac{\mathrm{d}\widetilde{x_j}}{\mathrm{d}x_\ell}(-1)^{k-1}Z^k_\ell=\delta_{jk}\det\left(\dfrac{\mathrm{d}\widetilde{\mathbf{x}}}{\mathrm{d}\mathbf{x}}\right).
\end{equation}
Now (\ref{coeffZ}) implies that 
$$(-1)^{k-1}Z^k_\ell=\left(\left(\dfrac{\mathrm{d}\widetilde{\mathbf{x}}}{\mathrm{d}\mathbf{x}}\right)^{-1}\right)_{\ell k}\det\left(\dfrac{\mathrm{d}\widetilde{\mathbf{x}}}{\mathrm{d}\mathbf{x}}\right),$$
as $\left(\dfrac{\mathrm{d}\widetilde{\mathbf{x}}}{\mathrm{d}\mathbf{x}}\right)^{-1}\dfrac{\mathrm{d}\widetilde{\mathbf{x}}}{\mathrm{d}\mathbf{x}}=\dfrac{\mathrm{d}\widetilde{\mathbf{x}}}{\mathrm{d}\mathbf{x}}\left(\dfrac{\mathrm{d}\widetilde{\mathbf{x}}}{\mathrm{d}\mathbf{x}}\right)^{-1}=I$. $\hfill \Box$
\end{proof}

\begin{theorem}\label{equivalentto3.3.10}
Let $(a_1,...,a_r)$ be coordinates on the Lie group $G$ and let the infinitesimal vector field with respect to the coordinate $a_j$ be given as 
$$\mathbf{v}_j=\Xi_jD_\mathbf{x}+\mathscr{Q}_j\boldsymbol{\nabla_{u^\alpha_J}},$$
where $\Xi_j=(\xi^1_j,...,\xi^p_j)$, $\mathscr{Q}_j=(Q^1_j,...,Q^q_j,D_1Q^1_j,...)$, $D_\mathbf{x}=(D_1,...,D_p)$ and $\boldsymbol{\nabla_{u^\alpha_J}}=(\partial_{u^1},...,\partial_{u^q},\partial_{u^1_1},...)$.  Let $\mathcal{A}d(g)$ be the Adjoint representation of $G$ with respect to the $\mathbf{v}_j$. Then the action of $g\in G$ on $\mathbf{v}_j$ is
\begin{align}\nonumber
&g\cdot \left(\left(\begin{array}{cc}\Xi_j(\zede) & \mathscr{Q}_j(\zede)\end{array}\right)\left(\begin{array}{c}D_\mathbf{x}\\
\boldsymbol{\nabla_{u^\alpha_J}}\end{array}\right)\right)=\left(\begin{array}{cc}\Xi_j(\widetilde{\zede}) & \mathscr{Q}_j(\widetilde{\zede})\end{array}\right)\\[10pt]
&\;\scriptstyle{\times} \left(\begin{array}{cc}\left(\dfrac{\mathrm{d}\widetilde{\mathbf{x}}}{\mathrm{d}\mathbf{x}}\right)^{-T} & \; \mathrm{O}\\[10pt]\label{adjointactionnew}
-\left(\dfrac{\partial \widetilde{u^\alpha_J}}{\partial u^\alpha_J}\right)^{-T}\left(\dfrac{\partial\widetilde{\mathbf{x}}}{\partial u^\alpha_J}\right)^T \mathcal{X}^{-1} & \;\left(\dfrac{\partial \widetilde{u^\alpha_J}}{\partial u^\alpha_J}\right)^{-T}\left(\dfrac{\partial\widetilde{\mathbf{x}}}{\partial u^\alpha_J}\right)^T \mathcal{X}^{-1}\left(\dfrac{\mathrm{d}u^\alpha_J}{\mathrm{d}\mathbf{x}}\right)^T+\mathcal{Y}^{-1}
\end{array}\right)\left(\begin{array}{c}D_\mathbf{x}\\
\boldsymbol{\nabla_{u^\alpha_J}} \end{array}\right),
\end{align}
where 
$$\begin{array}{rl}
\mathcal{X}=&\kern -8pt \left(\dfrac{\partial \widetilde{\mathbf{x}}}{\partial \mathbf{x}}\right)^T-\left(\dfrac{\partial \widetilde{u^\alpha_J}}{\partial \mathbf{x}}\right)^T\left(\dfrac{\partial \widetilde{u^\alpha_J}}{\partial u^\alpha_J}\right)^{-T}\left(\dfrac{\partial\widetilde{\mathbf{x}}}{\partial u^\alpha_J}\right)^T,\\\nonumber
\mathcal{Y}=&\kern-8pt \left(\dfrac{\partial \widetilde{u^\alpha_J}}{\partial u^\alpha_J}\right)^T-\left(\dfrac{\partial\widetilde{\mathbf{x}}}{\partial u^\alpha_J}\right)^T\left(\dfrac{\partial \widetilde{\mathbf{x}}}{\partial \mathbf{x}}\right)^{-T}\left(\dfrac{\partial \widetilde{u^\alpha_J}}{\partial \mathbf{x}}\right)^T,\\
\mathrm{O}=&\kern-8pt \textrm{ zero matrix.}
\end{array}$$
Furthermore, 
\begin{equation}\label{actionXiDx}
\mathcal{A}d(g)\Xi(\zede)=\Xi(\widetilde{\zede})\left(\dfrac{\mathrm{d}\widetilde{\mathbf{x}}}{\mathrm{d}\mathbf{x}}\right)^{-T}-\mathscr{Q}(\widetilde{\zede})\left(\dfrac{\partial \widetilde{u^\alpha_J}}{\partial u^\alpha_J}\right)^{-T}\left(\dfrac{\partial\widetilde{\mathbf{x}}}{\partial u^\alpha_J}\right)^T \mathcal{X}^{-1}
\end{equation}
and
\begin{equation}\label{actionQDu}
\mathcal{A}d(g)\mathscr{Q}(\zede)=\mathscr{Q}(\widetilde{\zede})\left(\left(\dfrac{\partial \widetilde{u^\alpha_J}}{\partial u^\alpha_J}\right)^{-T}\left(\dfrac{\partial\widetilde{\mathbf{x}}}{\partial u^\alpha_J}\right)^T \mathcal{X}^{-1}\left(\dfrac{\mathrm{d}u^\alpha_J}{\mathrm{d}\mathbf{x}}\right)^T+\mathcal{Y}^{-1}\right).
\end{equation}
\end{theorem}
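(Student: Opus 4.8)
The plan is to prove Theorem \ref{equivalentto3.3.10} by reducing it to the known Theorem 3.3.10 of \cite{Mansfield} (the statement recorded in the excerpt as Equation (\ref{definitionxiphis})), and then unwinding the chain rule carefully. First I would fix $g\in G$ and treat the vector field $\mathbf{v}_j$ not in its ``evolutionary plus horizontal'' split $\Xi_j D_{\mathbf{x}}+\mathscr{Q}_j\boldsymbol{\nabla}_{u^\alpha_\mathrm{J}}$ but in the naive coordinate frame $\partial_{x_i},\partial_{u^\alpha_\mathrm{J}}$. The point is that $D_k=\partial_{x_k}+\sum u^\alpha_{\mathrm{J}k}\partial_{u^\alpha_\mathrm{J}}$, so passing between the two frames is a fixed unipotent change of basis given by the matrix $\begin{pmatrix} I & \mathrm{d}u^\alpha_\mathrm{J}/\mathrm{d}\mathbf{x}\\ \mathrm{O} & I\end{pmatrix}$, and likewise in the tilded variables. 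The Adjoint action is by definition the substitution $(\mathbf x,\mathbf u)\mapsto(\widetilde{\mathbf x},\widetilde{\mathbf u})$, $\partial_{x_i}\mapsto\partial_{\widetilde{x_i}}$, $\partial_{u^\alpha_\mathrm{J}}\mapsto\partial_{\widetilde{u^\alpha_\mathrm{J}}}$, so in the coordinate frame the matrix implementing $\mathcal{A}d(g)$ is just the inverse transpose of the full Jacobian $\partial(\widetilde{\mathbf x},\widetilde{u^\alpha_\mathrm{J}})/\partial(\mathbf x,u^\alpha_\mathrm{J})$, which is block of the form $\begin{pmatrix}\partial\widetilde{\mathbf x}/\partial\mathbf x & \partial\widetilde{\mathbf x}/\partial u^\alpha_\mathrm{J}\\ \partial\widetilde{u^\alpha_\mathrm{J}}/\partial\mathbf x & \partial\widetilde{u^\alpha_\mathrm{J}}/\partial u^\alpha_\mathrm{J}\end{pmatrix}$.

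The heart of the computation is then a block $2\times2$ matrix inverse. Writing that Jacobian transpose as $\begin{pmatrix}P & Q\\ R & S\end{pmatrix}$ with $P=(\partial\widetilde{\mathbf x}/\partial\mathbf x)^T$, $S=(\partial\widetilde{u^\alpha_\mathrm{J}}/\partial u^\alpha_\mathrm{J})^T$, $Q=(\partial\widetilde{u^\alpha_\mathrm{J}}/\partial\mathbf x)^T$, $R=(\partial\widetilde{\mathbf x}/\partial u^\alpha_\mathrm{J})^T$, the two Schur complements are precisely the matrices named $\mathcal X = P - Q S^{-1}R$ and $\mathcal Y = S - R P^{-1} Q$ in the statement. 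The standard block-inverse formula gives the $(2,1)$ and $(2,2)$ blocks of the inverse transpose as $-S^{-1}R\,\mathcal X^{-1}$ and $\mathcal Y^{-1}$; the $(1,1)$ block is $\mathcal X^{-1}$ and the $(1,2)$ block vanishes after one assembles everything into the $D_{\mathbf x}$-frame, exactly reproducing the $2\times2$ matrix displayed in (\ref{adjointactionnew}). So the plan is: (i) write $\mathbf v_j$ in the $D_{\mathbf x}$ frame, (ii) conjugate the coordinate-frame Adjoint matrix by the two unipotent change-of-basis matrices (tilded on one side, untilded on the other), (iii) carry out the block-inverse bookkeeping, and (iv) read off (\ref{adjointactionnew}). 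Then (\ref{actionXiDx}) and (\ref{actionQDu}) follow immediately by extracting the $\Xi$-row and the $\mathscr Q$-row of (\ref{adjointactionnew}): since $\mathcal{A}d(g)$ acts on the column $(\Xi,\mathscr Q)^T$ and the matrix is block lower-triangular after the reduction, the $\Xi$-component picks up only $\mathcal X^{-1}$ together with the coupling term $-S^{-1}R\,\mathcal X^{-1}$ hitting $\mathscr Q$, and the $\mathscr Q$-component picks up the $(2,2)$ block acting on $\mathscr Q$ plus the coupling into the $D_{\mathbf x}$ part.

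I expect the main obstacle to be purely bookkeeping: keeping transposes and left/right multiplications straight, and making sure the unipotent conjugation by the $\mathrm{d}u^\alpha_\mathrm{J}/\mathrm{d}\mathbf x$ blocks lands the mixed term $(\partial\widetilde{u^\alpha_\mathrm J}/\partial u^\alpha_\mathrm J)^{-T}(\partial\widetilde{\mathbf x}/\partial u^\alpha_\mathrm J)^T\mathcal X^{-1}(\mathrm d u^\alpha_\mathrm J/\mathrm d\mathbf x)^T$ in the $(2,2)$ slot with the correct sign, and that the $(1,2)$ slot really does collapse to the zero matrix. A subtlety worth flagging is that the infinite jet makes these ``matrices'' formally infinite; one handles this by noting that $\widetilde{u^\alpha_\mathrm J}$ depends on only finitely many $u^\alpha_\mathrm{K}$, so each row is finite and all the products and inverses are well-defined order by order, exactly as in the finite-order truncations used throughout \cite{Mansfield}. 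Once the block algebra is set up, everything is forced, and the equivalence with Theorem 3.3.10 of \cite{Mansfield} is a consistency check rather than an independent argument.
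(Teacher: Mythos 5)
Your proposal follows essentially the same route as the paper's proof: the paper likewise starts from the coordinate-frame action $g\cdot(\boldsymbol{\nabla_\mathbf{x}},\boldsymbol{\nabla_{u^\alpha_J}})^T$ given by the inverse transpose of the full Jacobian, computes its blocks via the Schur complements $\mathcal{X}$ and $\mathcal{Y}$ (citing Henderson--Searle and Hotelling for the block-inverse formula), passes to the $(D_\mathbf{x},\boldsymbol{\nabla_{u^\alpha_J}})$ frame by the substitution $D_\mathbf{x}=\boldsymbol{\nabla_\mathbf{x}}+(\mathrm{d}u^\alpha_J/\mathrm{d}\mathbf{x})^T\boldsymbol{\nabla_{u^\alpha_J}}$ -- which is exactly your unipotent conjugation -- and verifies by the chain rule that the $(1,2)$ block collapses to zero, before invoking Theorem 3.3.10 of Mansfield to identify the result with $\mathcal{A}d(g)$. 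The plan is correct and matches the paper's argument in all essentials.
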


\begin{proof}
We know that
\begin{equation}\label{actionDx}
g\cdot \left(\begin{array}{c}\boldsymbol{\nabla_\mathbf{x}}\\
\boldsymbol{\nabla_{u^\alpha_J}}\end{array}\right)=\left(\begin{array}{cc}\dfrac{\partial \widetilde{\mathbf{x}}}{\partial \mathbf{x}}& \dfrac{\partial \widetilde{\mathbf{x}}}{\partial u^\alpha_J}\\[10pt]
\dfrac{\partial \widetilde{u^\alpha_J}}{\partial \mathbf{x}} & \dfrac{\partial \widetilde{u^\alpha_J}}{\partial u^\alpha_J}
\end{array}\right)^{-T}\left(\begin{array}{c}\boldsymbol{\nabla_\mathbf{x}}\\ \boldsymbol{\nabla_{u^\alpha_J}}\end{array}\right),
\end{equation}
where
$$\left(\begin{array}{cc}\dfrac{\partial \widetilde{\mathbf{x}}}{\partial \mathbf{x}}& \dfrac{\partial \widetilde{\mathbf{x}}}{\partial u^\alpha_J}\\[10pt]
\dfrac{\partial \widetilde{u^\alpha_J}}{\partial \mathbf{x}} & \dfrac{\partial \widetilde{u^\alpha_J}}{\partial u^\alpha_J}
\end{array}\right)^{-T}=\left(\begin{array}{cc} \mathcal{X}^{-1} & \;-\left(\dfrac{\partial \widetilde{\mathbf{x}}}{\partial \mathbf{x}}\right)^{-T}\left(\dfrac{\partial \widetilde{u^\alpha_J}}{\partial \mathbf{x}}\right)^T\mathcal{Y}^{-1}\\[10pt]
-\left(\dfrac{\partial\widetilde{u^\alpha_J}}{\partial u^\alpha_J}\right)^{-T}\left(\dfrac{\partial \widetilde{\mathbf{x}}}{\partial u^\alpha_J}\right)^T\mathcal{X}^{-1} & \mathcal{Y}^{-1}\end{array}\right),$$
which was calculated using a result in \cite{HendersonSearle,Hotelling} since we assume $\partial \widetilde{\mathbf{x}}/\partial \mathbf{x}$ and $\partial \widetilde{u^\alpha_J}/\partial u^\alpha_J$ are nonsingular.

Letting $g\in G$ act on $D_\mathbf{x}$, we obtain
\begin{align}\nonumber
&g \cdot D_\mathbf{x}=\boldsymbol{\nabla_{\widetilde{\mathbf{x}}}}+\left(\dfrac{\mathrm{d} \widetilde{u^\alpha_J}}{\mathrm{d}\widetilde{\mathbf{x}}}\right)^T\boldsymbol{\nabla_{\widetilde{u^\alpha_J}}}=\mathcal{X}^{-1}\boldsymbol{\nabla_{\mathbf{x}}}-\left(\dfrac{\partial \widetilde{\mathbf{x}}}{\partial\mathbf{x}}\right)^{-T}\left(\dfrac{\partial \widetilde{u^\alpha_J}}{\partial \mathbf{x}}\right)^T\mathcal{Y}^{-1}\boldsymbol{\nabla_{u^\alpha_J}}\\\nonumber
&\quad+\left(\dfrac{\mathrm{d}\widetilde{\mathbf{x}}}{\mathrm{d}\mathbf{x}}\right)^{-T}\left(\dfrac{\mathrm{d}\widetilde{u^\alpha_J}}{\mathrm{d}\mathbf{x}}\right)^T\left[-\left(\dfrac{\partial \widetilde{u^\alpha_J}}{\partial u^\alpha_J}\right)^{-T}\left(\dfrac{\partial \widetilde{\mathbf{x}}}{\partial u^\alpha_J}\right)^T\mathcal{X}^{-1}\boldsymbol{\nabla_{\mathbf{x}}}+\mathcal{Y}^{-1}\boldsymbol{\nabla_{u^\alpha_J}}\right]\\\nonumber
&\;=\left(\dfrac{\mathrm{d}\widetilde{\mathbf{x}}}{\mathrm{d}\mathbf{x}}\right)^{-T}\left(\left(\left(\dfrac{\partial \widetilde{\mathbf{x}}}{\partial \mathbf{x}}\right)^T+\left(\dfrac{\mathrm{d} u^\alpha_\mathrm{J}}{\mathrm{d}\mathbf{x}}\right)^T\left(\dfrac{\partial \widetilde{\mathbf{x}}}{\partial u^\alpha_\mathrm{J}}\right)^T\right)\right. \\\nonumber
&\quad\left.-\left(\left(\dfrac{\partial\widetilde{u^\alpha_\mathrm{J}}}{\partial \mathbf{x}}\right)^T+\left(\dfrac{\mathrm{d} u^\alpha_\mathrm{J}}{\mathrm{d}\mathbf{x}}\right)^T\left(\dfrac{\partial \widetilde{u^\alpha_\mathrm{J}}}{\partial u^\alpha_\mathrm{J}}\right)^T \right)\left(\dfrac{\partial \widetilde{u^\alpha_\mathrm{J}}}{\partial u^\alpha_\mathrm{J}}\right)^{-T}\left(\dfrac{\partial\widetilde{\mathbf{x}}}{\partial u^\alpha_\mathrm{J}}\right)^T\right)\mathcal{X}^{-1}\boldsymbol{\nabla_{\mathbf{x}}}\\\nonumber
&\quad +\left(\dfrac{\mathrm{d}\widetilde{\mathbf{x}}}{\mathrm{d}\mathbf{x}}\right)^{-T}\left(\left(\left(\dfrac{\partial\widetilde{u^\alpha_\mathrm{J}}}{\partial \mathbf{x}}\right)^T+\left(\dfrac{\mathrm{d} u^\alpha_\mathrm{J}}{\mathrm{d}\mathbf{x}}\right)^T\left(\dfrac{\partial \widetilde{u^\alpha_\mathrm{J}}}{\partial u^\alpha_\mathrm{J}}\right)^T \right)\right.\\\nonumber
&\quad\left.-\left(\left(\dfrac{\partial \widetilde{\mathbf{x}}}{\partial \mathbf{x}}\right)^T+\left(\dfrac{\mathrm{d} u^\alpha_\mathrm{J}}{\mathrm{d}\mathbf{x}}\right)^T\left(\dfrac{\partial \widetilde{\mathbf{x}}}{\partial u^\alpha_\mathrm{J}}\right)^T\right)\left(\dfrac{\partial\widetilde{\mathbf{x}}}{\partial \mathbf{x}}\right)^{-T}\left(\dfrac{\partial \widetilde{u^\alpha_\mathrm{J}}}{\partial \mathbf{x}}\right)^T\right)\mathcal{Y}^{-1}\boldsymbol{\nabla_{u^\alpha_J}}
\\\nonumber
&\;=\left(\dfrac{\mathrm{d}\widetilde{\mathbf{x}}}{\mathrm{d}\mathbf{x}}\right)^{-T}\left(\mathcal{X}\mathcal{X}^{-1}\boldsymbol{\nabla_{\mathbf{x}}}+\left(\dfrac{\mathrm{d}u^\alpha_\mathrm{J}}{\mathrm{d}\mathbf{x}}\right)^T\mathcal{Y}\mathcal{Y}^{-1}\boldsymbol{\nabla_{u^\alpha_J}}\right)\\\nonumber
&\;=\left(\dfrac{\mathrm{d}\widetilde{\mathbf{x}}}{\mathrm{d}\mathbf{x}}\right)^{-T}D_\mathbf{x}.
\end{align}
Note that we have used $D_\mathbf{x}=\boldsymbol{\nabla_\mathbf{x}}+(\mathrm{d}u^\alpha_J/\mathrm{d}\mathbf{x})^T\boldsymbol{\nabla_{u^\alpha_J}}$ and the chain rule. 

From (\ref{actionDx}) we already know what the action of $g\in G$ is on $\boldsymbol{\nabla_{u^\alpha_J}}$; we just need to substitute $\boldsymbol{\nabla_{\mathbf{x}}}$ by $D_\mathbf{x}-(\mathrm{d} u^\alpha_J/\mathrm{d}\mathbf{x})^T\boldsymbol{\nabla_{u^\alpha_J}}$ to obtain
\begin{align}\nonumber
&g\cdot \boldsymbol{\nabla_{u^\alpha_J}}=-\left(\dfrac{\partial \widetilde{u^\alpha_J}}{\partial u^\alpha_J}\right)^{-T}\left(\dfrac{\partial \widetilde{\mathbf{x}}}{\partial u^\alpha_J}\right)^T\mathcal{X}^{-1}D_\mathbf{x}\\\nonumber
&\quad+\left[\left(\dfrac{\partial \widetilde{u^\alpha_J}}{\partial u^\alpha_J}\right)^{-T}\left(\dfrac{\partial \widetilde{\mathbf{x}}}{\partial u^\alpha_J}\right)^T\mathcal{X}^{-1}\left(\dfrac{\mathrm{d} u^\alpha_J}{\mathrm{d}\mathbf{x}}\right)^T+\mathcal{Y}^{-1}\right]\boldsymbol{\nabla_{u^\alpha_J}}.
\end{align}
This completes the proof of (\ref{adjointactionnew}).

Since $\mathbf{v}_j=\Xi_jD_\mathbf{x}+\mathscr{Q}_j\boldsymbol{\nabla_{u^\alpha_J}}$ can be written as $\Xi_j\boldsymbol{\nabla_\mathbf{x}}+\Phi_j\boldsymbol{\nabla_{u^\alpha_J}}$, by Theorem 3.3.10 in \cite{Mansfield} we know that
\begin{align}\nonumber
&\mathcal{A}d(g)\left(\begin{array}{cc} \Xi(\zede) & \mathscr{Q}(\zede)\end{array}\right)\left(\begin{array}{c} D_\mathbf{x}\\\boldsymbol{\nabla_{u^\alpha_J}}\end{array}\right)=\left(\begin{array}{cc} \Xi(\widetilde{\zede}) & \mathscr{Q}(\widetilde{\zede})\end{array}\right)\\[10pt]\nonumber
&\;\scriptstyle{\times} \left(\begin{array}{cc}\left(\dfrac{\mathrm{d}\widetilde{\mathbf{x}}}{\mathrm{d}\mathbf{x}}\right)^{-T} & \; \mathrm{O}\\[10pt]
-\left(\dfrac{\partial \widetilde{u^\alpha_J}}{\partial u^\alpha_J}\right)^{-T}\left(\dfrac{\partial\widetilde{\mathbf{x}}}{\partial u^\alpha_J}\right)^T \mathcal{X}^{-1} & \;\left(\dfrac{\partial \widetilde{u^\alpha_J}}{\partial u^\alpha_J}\right)^{-T}\left(\dfrac{\partial\widetilde{\mathbf{x}}}{\partial u^\alpha_J}\right)^T \mathcal{X}^{-1}\left(\dfrac{\mathrm{d}u^\alpha_J}{\mathrm{d}\mathbf{x}}\right)^T+\mathcal{Y}^{-1}
\end{array}\right)\left(\begin{array}{c}D_\mathbf{x}\\
\boldsymbol{\nabla_{u^\alpha_J}} \end{array}\right);
\end{align}
from this we can easily read the results (\ref{actionXiDx}) and (\ref{actionQDu}). $\hfill \Box$
\end{proof}

\begin{center}
\textsc{Universidade Federal de S\~{a}o Carlos\\University of Kent}
\end{center}
\end{document}